\newenvironment{customthm}[1]
  {\innercustomthm}
  {\endinnercustomthm}
\newcommand{\nc}{\newcommand}
\nc{\browntext}[1]{\textcolor{brown}{#1}}
\nc{\greentext}[1]{\textcolor{green}{#1}}
\nc{\redtext}[1]{\textcolor{red}{#1}}
\nc{\bluetext}[1]{\textcolor{blue}{#1}}
\nc{\brown}[1]{\browntext{ #1}}
\nc{\green}[1]{\greentext{ #1}}
\nc{\red}[1]{\redtext{ #1}}
\nc{\blue}[1]{\bluetext{ #1}}
\nc{\zb}[1]{\redtext{From zb: #1}}
\newcommand{\ff}{B}
\newcommand{\tK}{\widetilde{K}}
\newtheorem{thm}{Theorem}  [section]
\newtheorem{cor}[thm]{Corollary}
\newtheorem{lem}[thm]{Lemma}
\newtheorem{prop}[thm]{Proposition}
\newtheorem{conj}[thm]{Conjecture} 
\theoremstyle{remark}
\newtheorem{rem}[thm]{Remark}
\numberwithin{equation}{section}
\newcommand{\mbf}{\mathbf}
\newcommand{\mrm}{\mathrm}
\newcommand{\ev}{\bar{0}}
\newcommand{\odd}{\bar{1}}
\newcommand{\tU}{\widetilde{\mathbf{U}}}
\newcommand{\diag}{\mrm{diag}}
\newcommand{\Iblack}{\I_{\bullet}}
\newcommand{\wb}{w_\bullet}
\newcommand{\btau}{\tau}
\def \tf{{\widetilde{y}}}
\def \tk{{\widetilde{k}}}
\newcommand{\la}{\lambda}
\newcommand{\K}{\mathbb K}
\newcommand{\Iw}{\I_{\circ}}
\newcommand{\id}{\text{id}}
\newcommand{\Iwhite}{\I_{\circ}}
\newcommand{\bbZ}{\mathbb Z}
\newcommand{\one}{\mathbf 1}
\newcommand{\oldone}{\mathbf 1}
\newcommand{\onestar}{\one^\star}
\newcommand{\ov}{\overline}
\newcommand{\qbinom}[2]{\begin{bmatrix} #1\\#2 \end{bmatrix} }
\newcommand{\U}{\mbf U}
\newcommand{\Udot}{\dot{\mbf U}}
\newcommand{\Ui}{{\mbf U}^\imath}
\newcommand{\vs}{\varsigma}
\newcommand{\Z}{\mathbb Z}
\newcommand{\I}{\mathbb I}
\newcommand{\T}{\mbf T}
\nc{\fprime}{\bold{'f}}
\def \m{{m}}
\def \bU{{\mathbf U}}
\def \bvs{{\boldsymbol{\varsigma}}}
\newcommand{\tUi}{\widetilde{{\mathbf U}}^\imath}
\begin{document}

\title[Serre-Lusztig relations for  $\imath${}quantum groups]{Serre-Lusztig relations for  $\imath${}quantum groups}

\author[Xinhong Chen]{Xinhong Chen}
\address{Department of Mathematics, Southwest Jiaotong University, Chengdu 610031, P.R.China}
\email{chenxinhong@swjtu.edu.cn}

\author[Ming Lu]{Ming Lu}
\address{Department of Mathematics, Sichuan University, Chengdu 610064, P.R.China}
\email{luming@scu.edu.cn}

\author[Weiqiang Wang]{Weiqiang Wang}
\address{Department of Mathematics, University of Virginia, Charlottesville, VA 22904}
\email{ww9c@virginia.edu}

\subjclass[2010]{Primary 17B37,17B67.}

\keywords{Quantum groups, quantum symmetric pairs, Serre-Lusztig relations}

\begin{abstract}
Let $(\bf U, \bf U^\imath)$ be a quantum symmetric pair of Kac-Moody type. The $\imath$quantum groups $\bf U^\imath$ and the universal $\imath$quantum groups $\widetilde{\bf U}^\imath$ can be viewed as a generalization of quantum groups and Drinfeld doubles $\widetilde{\bf U}$. In this paper we formulate and establish Serre-Lusztig relations for $\imath$quantum groups in terms of $\imath$divided powers, which are an $\imath$-analog of Lusztig's higher order Serre relations for quantum groups. This has applications to braid group symmetries on $\imath$quantum groups.
\end{abstract}

\maketitle
\setcounter{tocdepth}{1}
\tableofcontents

\section{Introduction}

\subsection{Background}

For a Drinfeld-Jimbo quantum group $\U$ associated to a generalized Cartan matrix $(a_{ij})_{i,j\in \I}$, Lusztig \cite[Ch. 7]{Lu93} formulated higher order Serre relations, which we shall refer to as Serre-Lusztig relations in this paper. The Serre-Lusztig relations have rich connections with braid group actions and further applications to the finer algebraic structures of quantum groups, cf. \cite[Part~VI]{Lu93}.

Let $i\neq j\in \I$, $e =\pm 1$, $m\in \Z$ and $n\in \Z_{\ge 0}$. In terms of the elements
\begin{equation}  \label{eq:fij}
f_{i,j;n,m,e}^+ := \sum_{r+s=m} (-1)^r q_i^{er(1-na_{ij}-m)} E_i^{(r)} E_j^{(n)} E_i^{(s)},
\end{equation}
the Serre-Lusztig relations (cf. \cite[7.1.1, 7.15]{Lu93}) are expressed as
\begin{align}  \label{eq:LS}
f_{i,j;n,m,e}^+ =0,
\qquad \text{ for } m \geq 1 -na_{ij}.
\end{align}
The standard $q$-Serre relation is recovered at $n=1$ and $m=1-a_{ij}$. 

Serre-Lusztig relations have also played a crucial role in the XXZ spin chain with periodic boundary conditions and the superintegrable chiral Potts model, cf. \cite{KM01, ND08} and references therein.

For our purpose in this paper, it is helpful  to envision the Serre-Lusztig relations \eqref{eq:LS} in the following 2 steps.

\begin{enumerate}
\item[$\triangleright$]
One makes an Ansatz from the standard $q$-Serre relation to guess the formula for the {\em Serre-Lusztig relations of minimal degree}   (i.e., \eqref{eq:LS} for $m=1-na_{ij}$):
\begin{align}
  \label{eq:minSLQG}
 f_{i,j;n,1 -na_{ij},e}^+ = \sum_{r+s=1-na_{ij}} (-1)^r E_i^{(r)} E_j^{(n)} E_i^{(s)} =0.
\end{align}

\item[$\triangleright$]
 Starting with \eqref{eq:minSLQG}, the formula for $f_{i,j;n,m,e}^+$ and the relation \eqref{eq:LS} can be derived by the recursion formulas in \cite[Lemma~7.1.2]{Lu93} below: for $e =\pm 1, m \in \Z,$
\begin{align}
  \label{eq:Lrec}
q_i^{-e(na_{ij} +2m)} E_i f_{i,j; n,m,e}^+ - f_{i,j;n,m,e}^+ E_i
&= - [m+1]_{i} f_{i,j;n,m+1,e}^+,
\\
F_i f_{i,j;n,m,e}^+ - f_{i,j;n,m,e}^+ F_i
&= [na_{ij} +m-1]_{i} \tK_{-ei} f_{i,j;n,m-1,e}^+.
   \label{eq:LrecF}
\end{align}
\end{enumerate}

%
\vspace{2mm}

Let $\tU$ be the Drinfeld double of $\U$. Let $(\U, \Ui)$ be a quantum symmetric pair \cite{Le99, Ko14}, and let $(\tU, \tUi)$ be the universal quantum symmetric pair \cite{LW19a}. The so-called $\imath$quantum groups $\Ui$ and $\tUi$ can be viewed as generalizations of quantum groups, just as real Lie groups can be viewed as a generalization of complex Lie groups.
The definition of $\Ui$ and $\tUi$ is based on a Satake diagram or an admissible pair $(\Iblack, \tau)$: a partition $\I =\Iblack \sqcup \Iwhite$ with $\Iblack$ of finite type and (possibly trivial) Dynkin diagram involution $\tau$, which satisfy some compatibility conditions. The $\Ui =\Ui_{\bvs}$ depend on parameters $\bvs=(\vs_i)\in  (\K(q)^\times)^{\Iwhite}$, while $\tUi$ has additional Cartan subalgebra generators which produce various central elements (which controls the parameters $\bvs$). The $\imath$quantum groups $\tUi, \Ui$ are called quasi-split if $\Iblack =\emptyset$, and split if in addition $\tau =\id$.

The theory of canonical basis for quantum groups \cite{Lu90, Lu93} has been generalized to the setting of $\imath$quantum groups \cite{BW18b}. The rank 1 $\imath$canonical basis for $\Ui(\mathfrak{sl}_2)$ \cite{BW18a, BeW18} gives rise to the $\imath$divided powers $B^{(m)}_{i,\overline{p}}$ associated with $i\in \Iw$ with $\tau i =i = \wb i$ in $\Ui$ or $\tUi$, where $\wb$ denotes the longest element in the Weyl group $W_{\Iblack}$.  In contrast to the usual divided powers in quantum groups, the $\imath$divided powers, denoted by $B^{(m)}_{i,\overline{p}}$ for $i\in \Iw$ with $\tau i =i = \wb i$, are not monomials in the Chevalley generator $B_i$ and in addition depend on a parity $\ov{p} \in \Z_2$. The $\imath$divided powers $B^{(m)}_{i,\overline{p}}$ in the universal $\imath$quantum group $\tUi$ is formulated in \eqref{eq:iDPodd}--\eqref{eq:iDPev}, whose central reductions give us the version of $\imath$divided powers in $\Ui$ used in \cite{BW18a, BeW18, CLW18}.

 The $\imath$divided powers were essential in the formulation \cite{CLW18} of a distinguished $\imath$Serre relations in $\tUi$: for $i\neq j\in \Iw$ such that $\tau i = i =\wb i$,
\begin{align}
  \label{iSe}
\sum_{r+s =1-a_{ij}} (-1)^r  B_{i,\overline{p}}^{(r)}B_j B_{i,\overline{p} +\overline{a_{ij}}}^{(s)} &=0.
\end{align}
The $\imath$Serre relations in different forms for $\Ui$ associated with small values of Cartan integers $a_{ij}$ were known earlier \cite{Le02, Ko14, BK19}, but the expressions of $\imath$Serre relations in terms of monomials of Chevalley generators $B_i$ are getting quickly too cumbersome to be written down explicitly as $|a_{ij}|$ grows.
The $\imath$Serre relations \eqref{iSe} then led to a Serre presentation for quasi-split $\imath$quantum group $\Ui$ of arbitrary Kac-Moody type \cite{CLW18}.

\subsection{Goal}

The goal of this paper is to formulate and establish in full generality the Serre-Lusztig relations of $\imath$quantum groups associated to the $\imath$Serre relation \eqref{iSe}. We shall mainly work with $\tUi$ in this paper, and the Serre-Lusztig relations for $\Ui$ take the same form as for $\tUi$. The Serre-Lusztig relations allow us to formulate (partly conjectural) braid group symmetries for $\tUi$ and $\Ui$.

The main results in this paper provide another example to reinforce  a general expectation (first advocated in \cite{BW18a}) that most of the basic constructions for quantum groups admit (possibly highly nontrivial) natural generalizations in the setting of $\imath$quantum groups.

\subsection{Main results}
  \label{subsec:main}

We shall formulate and establish Serre-Lusztig relations for $\tUi$ in two stages by starting with those of minimal degrees.

For various structures of $\imath$quantum groups, it is conceptual and essential to work with $\imath$divided powers.
In an approach toward canonical basis arising from quantum symmetric pairs of Kac-Moody type \cite{BW18c}, 3 different types of $\imath$divided powers associated to $j \in \Iwhite$ are constructed, depending on
\[
\text{ (i) } \tau j = j =\wb j;
\quad
\text{ (ii) } \tau j \neq j;
\quad
\text{ (iii) } \tau j = j \neq \wb j.
\]
The $\imath$divided powers in cases (ii)-(iii) are denoted by $B_i^{(m)}$, for $m\ge 0$, and they do not depend on a parity as in Case (i) as described above. 

{\em By convention, we will use $B_{j, \ov{t}}^{(m)}$ to denote any of the above $\imath$divided powers in the settings where the conditions (i)-(iii) on $j$ are not specified; the index $\ov{t}$ is ignored in Cases (ii)-(iii).}

\begin{customthm}{{\bf A}}
[Serre-Lusztig relations  of minimal degree]
For any $i \neq j\in \Iw$ such that $\tau i = i =\wb i$, the following identities hold in $\tUi$:
\begin{align}
\sum_{r+s=1-na_{ij}} (-1)^r B^{(r)}_{i,\ov{p}} B_j^n B_{i,\ov{p}+\ov{na_{ij}}}^{(s)} &=0, \quad \text{for }n \ge 0,
  \label{eq:SerreBn12}
\\
\sum_{r+s=1-na_{ij}} (-1)^r B^{(r)}_{i,\ov{p}} B_{j, \ov{t}}^{(n)} B_{i,\ov{p}+\ov{na_{ij}}}^{(s)} &=0, \quad \text{for }n\ge 0.
  \label{eq:SerreBn10}
\end{align}
\end{customthm}
The identity \eqref{eq:SerreBn12} in Theorem~{\bf A} is a combination of Theorem~\ref{thm:minLS} (for $\tau j = j =\wb j$), Propositions~\ref{prop:jnotfixed} (for $\tau j \neq j$), and Proposition~\ref{prop:jfixed2} (for $\tau j = j \neq \wb j$). 
Amazingly, the relation \eqref{eq:SerreBn10} takes the same form as \eqref{eq:minSLQG} for the usual quantum groups. We view \eqref{eq:SerreBn10} to be more fundamental than \eqref{eq:SerreBn12} as it is valid at the level of  integral forms for (modified) $\imath$quantum groups, cf. \cite{BW18b}.

A version of Serre-Lusztig relations of minimal degree with supporting examples was proposed earlier by Baseilhac and Vu \cite{BaV14, BaV15} in the framework of tridiagonal pairs, for certain split $\imath$quantum groups (with $a_{ij}=-2, -1$, respectively); a more explicit form of these relations was conjectured in \cite{BaV14} (with $a_{ij}=-2$) and subsequently established in \cite{Ter18}; see Remark~\ref{rem:BVT}. The expressions of these relations in these works are in terms of monomials in Chevalley generators $B_i$ and look rather cumbersome. The Serre-Lusztig relations can be useful in the further study of $\imath$quantum groups with $q$ being a root of 1 (cf. \cite{BS21}), which according to  \cite{BaV14, BaV15} may play a central role in the identification of the symmetries of the Hamiltonian of the XXZ open spin chain.

The effort to understand and formulate the connections between \eqref{eq:SerreBn12} and \eqref{eq:SerreBn10} has led to the following theorem, which is an immediate consequence of Theorem~{\bf A} and Proposition~\ref{prop:SLinduct}.

\begin{customthm}{\bf B}
[Non-standard Serre-Lusztig relations for $\tUi$]
For any $i \neq j\in \Iw$ such that $\btau i=i=\wb i$ and $n,t \in \Z_{\ge 0}$, the following identities hold in $\tUi$:
\begin{align}
\sum_{r+s=1-na_{ij}+2t} (-1)^r B^{(r)}_{i,\ov{p}} B_j^n B_{i,\ov{p}+\ov{na_{ij}}}^{(s)} &=0,
  \label{eq:Serre2t}
\\
\sum_{r+s=1-na_{ij}+2t} (-1)^r B^{(r)}_{i,\ov{p}} B_j^{(n)} B_{i,\ov{p}+\ov{na_{ij}}}^{(s)} &=0.
  \label{eq:Serre1t}
\end{align}
(Note there is no $q$-powers involved in these identities in contrast to \eqref{eq:fij}--\eqref{eq:LS}.)
\end{customthm}



By going through numerous examples, we manage to guess explicit formulas for elements $\tf_{i,j; n,m,\ov{p},\ov{t},e}$ in $\tUi$, which is a proper $\imath$-analogue of $f_{i,j; n,m,e}^- \in \U^-$, the $F$-version of $f_{i,j; n,m,e}^+$; actually, $\tf_{i,j; n,m,\ov{p},\ov{t},e}$ has $f_{i,j; n,m,e}^-$ as its leading term. In addition, $\tf_{i,j;1,1-a_{ij},\ov{p},\ov{t},e}$ coincides with LHS of \eqref{iSe} and $\tf_{i,j;n,1-na_{ij},\ov{p},\ov{t},e}$ coincides with LHS of \eqref{eq:Serre2t}--\eqref{eq:Serre1t}.
It turns out the definition of $\tf_{i,j;n,m,\ov{p},\ov{t},e}$ depends on the parity of $m-na_{ij}$, cf. \eqref{eq:m-aodd}--\eqref{eq:m-aeven}.
We have the following recursion in $\tUi$ which formally looks like a mixture of the recursion formulas \eqref{eq:Lrec}--\eqref{eq:LrecF} in $\U$. 

\begin{customthm}{\bf C}   [Theorem~\ref{thm:recursion}]
For $i\neq j\in \Iw$ such that  $\btau i=i=\wb i$, $\ov{p}, \ov{t} \in\Z_2$, $n\ge 0$, and $e=\pm1$, we have
\begin{align}
&q_i^{-e(2m+na_{ij})}  B_i\tf_{i,j;n,m,\ov{p},\ov{t},e}-\tf_{i,j;n,m,\ov{p},\ov{t},e}B_i\\ \notag
&\quad = -[m+1]_{i} \tf_{i,j;n,m+1,\ov{p},\ov{t},e}
+[m+na_{ij}-1]_{i} q_i^{1-e(2m+na_{ij}-1)} \tk_i \tf_{i,j;n,m-1,\ov{p},\ov{t},e}.
\end{align}
\end{customthm}

The following generalizes Theorem~{\bf A}.

\begin{customthm}{\bf D}  [Serre-Lusztig relations for $\tUi$; see Theorem~\ref{thm:f=f'=0}]
Let $i\neq j\in \Iw$ such that  $\btau i=i=\wb i$, $\ov{p}\in\Z_2$, $n\ge 0$, and $e=\pm1$.
Then, for $m<0$ and $m>-na_{ij}$, we have
\begin{align}
\tf_{i,j;n,m,\ov{p},\ov{t},e}=0.
\end{align}
\end{customthm}

An anti-involution $\sigma_\imath$ for $\tUi$ constructed in \cite[Proposition 3.13]{BW18c} allows us to obtain an additional family of Serre-Lusztig relations for $\tUi$ involving new elements $\tf'_{i,j;n,m,\ov{p},\ov{t},e}$; for details see Theorems~\ref{thm:recursion}--\ref{thm:f=f'=0}.

Theorem~{\bf A} through Theorem~{\bf D} remain valid over $\Ui =\Ui_\bvs$, once we replace $\tk_i$ by the scalar $\vs_i$ in all relevant places and use the version of $\imath$divided powers in \eqref{eq:iDPoddUi}--\eqref{eq:iDPevUi}.

\subsection{Our approach}

The proof of Theorem~{\bf A} is much more challenging than its counterpart in the quantum group setting.

We show that the two identities \eqref{eq:SerreBn12} and \eqref{eq:SerreBn10} in Theorem~{\bf A} are equivalent. In case when $\tau j \neq j$, as the $\imath$divided powers of $B_j$ are standard, this equivalence is trivial.
However, in cases when $\tau j =j =\wb j$ and $\tau j =j \neq \wb j$, the $\imath$divided powers of $B_j$ have lower order terms. We show by using a key Proposition~\ref{prop:SLinduct} that the above identity \eqref{eq:SerreBn12} implies the following non-standard Serre-Lusztig relations (equivalent to Theorem {\bf B}): for $n, t \in \Z_{\ge 0}$,
\begin{align}
    \label{eq:nonstd}
\sum_{r+s=1-na_{ij}} (-1)^r B^{(r)}_{i,\ov{p}} B_j^{n-2t} B_{i,\ov{p}+\ov{na_{ij}}}^{(s)}  =0.
\end{align}
Now the identity \eqref{eq:SerreBn10} in Theorem~{\bf A} follows from \eqref{eq:SerreBn12} and \eqref{eq:nonstd}.

It remains to prove \eqref{eq:SerreBn12}. The proof is long and computational, and it follows a similar strategy in \cite{CLW18}  used in the proof of $\imath$Serre relation \eqref{iSe} (which is a special case of \eqref{eq:SerreBn12} at $n=1$). That is,
we use the expansion formulas of $\imath$divided powers into PBW basis of quantum $\mathfrak{sl}_2$ from \cite{BeW18} to reduce the proof to certain $q$-binomial identities; the Serre-Lusztig relations from quantum groups will be used as well.  Recall \cite{CLW18} we reduce the proof of the $\imath$Serre relation \eqref{iSe} to a $q$-identity with 3 auxiliary variables by such PBW expansion; instead of proving this $q$-identity directly (which we didn't know how), we deduce it from more general identities involving a function $G$ in 6 auxiliary variables (which admits simpler recursions).  Almost miraculously, the $q$-identity arising from our current reduction from \eqref{eq:SerreBn12}, which is much more involved than \cite{CLW18}, also follows from the same collection of identities involving $G$.

For 3 types of $\imath$divided powers for $B_j$ (i.e., (i)-(iii) in \S\ref{subsec:main}), the details of the proofs of \eqref{eq:SerreBn12} are largely the same with some differences. We give the complete details in case (i), and explain the differences in cases (ii)-(iii).

The main difficulty of Theorem~{\bf C} lies in its precise formulation (including guessing the formulas for $\tf_{i,j;n, m,\ov{p},\ov{t},e}$); its proof requires only routine though lengthy computations.

Observe that the base case $\tf_{i,j;n, 1-na_{ij},\ov{p},\ov{t},e}=0$ in Theorem~{\bf D} is exactly the Serre-Lusztig relation of minimal degree in Theorem~{\bf A}. Theorem~{\bf D} in general now follows readily from the recursion formulas in Theorem~{\bf C}.

\subsection{Applications} 

Keeping in mind Lusztig's formulas for braid group symmetries on $\U$ and connections to Serre-Lusztig relations \cite[Part VI]{Lu93}, the Serre-Lusztig relations for $\tUi$ and $\Ui$ suggest natural formulas involving $\tf_{i,j;n,-na_{ij},\ov{p},\ov{t},e}$ and $\tf'_{i,j;n,-na_{ij},\ov{p},\ov{t},e}$ in \eqref{eq:yy}--\eqref{eq:yy2} for braid group symmetries $\T_{i,e}'$ and $\T_{i,e}''$ on $\tUi$ and $\Ui$; see Conjecture~\ref{conj:braid}. For earlier works on braid group actions (associated to the underlying restricted root system) on $\Ui$ and $\tUi$, see \cite{KP11, LW19b, D19} for finite type and  see \cite{BaK20} for $q$-Onsager algebra (i.e., $\Ui$ of split affine type $A_1$).

Even for finite type, no braid group action on $\Ui$-modules is available, in contrast to the quantum group setting  \cite[Chapter 5]{Lu93}. This makes it difficult to verify directly the conjecture that $\T_{i,e}'$ and $\T_{i,e}''$ are algebra automorphisms of $\tUi$.
In a subsequent work, we shall develop further the $\imath$Hall algebra approach (cf. \cite{LW19b}) to establish Conjecture~\ref{conj:braid} for {\em quasi-split} $\imath$quantum groups $\tUi$ and $\Ui$ of Kac-Moody type.

\subsection{Organization}

The paper is organized as follows.
In the preliminary Section~\ref{sec:QSP}, we set up notations for Drinfeld doubles, $\imath$quantum groups, and $\imath$divided powers. In Section~\ref{sec:induction}, we formulate a key induction procedure, which will be used repeatedly.

In Section~\ref{sec:minimalSL} (and respectively, Section~\ref{sec:minimalSL2}), we establish the Serre-Lusztig relation \eqref{eq:SerreBn12} in Theorem~{\bf A} for $j\in \I$ with $\tau j= j =\wb j$ (and respectively, $\tau j \neq j$ or  $\tau j =j=\wb j$); parts of the proof are postponed to Appendix~\ref{App:A}. We then complete the proofs of Theorem~{\bf A} and Theorem~{\bf B}.

In Section~\ref{sec:SerreL1}, we prove the general Serre-Lusztig relations (Theorem~{\bf D}) by first establishing the recursive formulas (Theorem~{\bf C}).
In Appendix~\ref{App:B}, we prove some combinatorial $q$-binomial identities used in the proof of the general Serre-Lusztig relations in \S\ref{subsec:even}--\ref{subsec:odd}.

\subsection*{Acknowledgments.}
XC is partially supported by the National Natural Science Foundation of China grant No. 11601441 and the Fundamental Research Funds for the Central Universities grant No. 2682020ZT100. WW is partially supported by NSF grant DMS-1702254 and DMS-2001351. We thank University of Virginia and Institute of Mathematics, Academia Sinica (Taipei), for hospitality and excellent working environment.  We thank the anonymous referees for helpful comments and suggestions.

\section{Quantum symmetric pairs and $\imath${}quantum groups}
  \label{sec:QSP}

In this section, we recall the definitions of $\imath${}quantum groups and quantum symmetric pairs (QSP). We introduce universal $\imath${}quantum groups as subalgebras of Drinfeld doubles and $\imath$divided powers for $\tUi$. Then we review some $q$-binomial identities from \cite{CLW18}.

\subsection{Quantum groups and Drinfeld doubles}


Given a Cartan datum  $(\I,\cdot)$, we have a \emph{root datum} of type $(\I,\cdot)$ \cite[1.1.1, 2.2.1]{Lu93}, which consists of
\begin{itemize}
\item[(a)] two finitely generated free abelian groups $Y,X$ and a perfect bilinear pairing $\langle\cdot,\cdot\rangle:Y\times X\rightarrow\Z$;
\item[(b)] an embedding $\I\subset X$ ($i\mapsto \alpha_i$) and an embedding $\I\subset Y$ ($i\mapsto h_i$) such that $\langle h_i,\alpha_j\rangle =2\frac{i\cdot j}{i\cdot i}$ for all $i,j\in \I$.
\end{itemize}
We assume that the root datum defined above is \emph{$X$-regular} and \emph{$Y$-regular}, that is, $\{\alpha_i\mid i\in \I\}$ is linearly independent in $X$ and $\{h_i\mid i\in \I\}$ is linearly independent in $Y$. We further assume $Y$ is of the form in this paper
\begin{align} \label{eq:YY}
Y =(\oplus_{i\in \I} \Z h_i) \oplus Y',
\quad \text{ for a free abelian group } Y'.
\end{align}
For example, the $Y$ arising from a minimal realization of a generalized Cartan matrix is of this form.

%
%
Let $q$ be an indeterminate, and denote
\begin{align}
\epsilon_i :=\frac{i\cdot i}{2},
\quad
q_i:=q^{\epsilon_i}, \qquad \forall i\in \I.
\end{align}
The matrix
\[
C= (a_{ij})_{i,j\in \I} =(\langle h_i, \alpha_j\rangle)_{i,j\in \I}
\]
is a \emph{generalized Cartan matrix}.
The matrix $DC$ is symmetric, where the symmetrizer $D :=\diag(\epsilon_i\mid  i\in \I)$ is a diagonal matrix.
For $n,m\in \Z$ with $m\ge 0$, we denote the $q$-integers and $q$-binomial coefficients as
\begin{align*}
\begin{split}
[n] =\frac{q^n-q^{-n}}{q-q^{-1}},
\quad
[m]! =\prod_{i=1}^m [i],
\quad
\qbinom{n}{d}  &=
\begin{cases}
\frac{[n][n-1]\ldots [n-d+1]}{[d]!}, & \text{ if }d \ge 0,
\\
0, & \text{ if }d<0.
\end{cases}
\end{split}
\end{align*}
We denote by $[n]_{q_i}$ and $\qbinom{n}{d}_{q_i}$, or simply $[n]_{i}$ and $\qbinom{n}{d}_{i}$,  the variants of $[n]$ and $\qbinom{n}{d}$ with $q$ replaced by $q_i$. For  any $i\neq j\in \I$, define the following polynomial in two (noncommutative) variables
\begin{align*}
S_{ij}(x,y)=\sum_{r=0}^{1-a_{ij}} (-1)^r \qbinom{1-a_{ij}}{r}_{i}  x^{r}yx^{1-a_{ij}-r}.
\end{align*}

Let $\K$ be a field of characteristic $0$.
Associated to a root datum $(Y,X,\langle\cdot,\cdot\rangle,\dots)$ of type $(\I,\cdot)$,
the {\em Drinfeld double} $\tU$ is the associative $\K(q)$-algebra with generators $E_i,F_i,K_h,K_h'$ for all $i\in \I$,  
subject to the following relations: for $h, h' \in Y, i, j \in \I$,
\begin{align}
K_hK_{h'} &=K_{h'}K_{h},\; K_hK'_{h'}=K'_{h'}K_h,\; K'_hK'_{h'}=K'_{h'}K'_{h},
\label{Q1}
\\
K_hE_i &=q^{\langle h,\alpha_i\rangle} E_i K_h, \qquad
 K_hF_i=q^{-\langle h,\alpha_i\rangle}F_i K_h,
\label{eq:EK}
\\
K'_hE_i &=q^{-\langle h,\alpha_i\rangle} E_i K'_h, \qquad
 K'_hF_i=q^{\langle h,\alpha_i\rangle}F_i K'_h,
 \label{eq:K2}
 \\
 [E_i,F_j] &=\delta_{ij}\frac{\tK_{i}-\tK_{i}'}{q_i-q_i^{-1}},
\quad \text{ where }\widetilde{K}_i :=K_{h_i}^{\epsilon_i},
\label{Q4}\\
S_{ij}(E_i,E_j) &=0=S_{ij}(F_i,F_j), \quad \forall i \neq j\in \I.\label{q serre}
\end{align}

Let
\[
F_i^{(n)} =F_i^n/[n]_{i} ^!, \quad E_i^{(n)} =E_i^n/[n]_{i} ^!, \quad \text{ for } n\ge 1 \text{ and  } i\in \I.
\]
Then the $q$-Serre relations \eqref{q serre} above
can be rewritten  as follows: for $i\neq j \in \I$,
\begin{align*}
\sum_{r=0}^{1-a_{ij}} (-1)^r  E_i^{(r)} E_j E_i^{({1-a_{ij}-r})}=0,
\\
\sum_{r=0}^{1-a_{ij}} (-1)^r  F_i^{(r)}F_j F_i^{(1-a_{ij}-r)}=0 .
\end{align*}

Note that $\tK_i\tK'_i$ are central in $\tU$ for any $i\in \I$.  The comultiplication $\Delta: \widetilde{\U} \rightarrow \widetilde{\U} \otimes \widetilde{\U}$ is defined as follows:
\begin{align}  \label{eq:Delta}
\begin{split}
\Delta(E_i)  = E_i \otimes 1 + \tK_i \otimes E_i, & \quad \Delta(F_i) = 1 \otimes F_i + F_i \otimes \tK_{i}', \\
 \Delta(\tK_h) = \tK_h \otimes \tK_h, & \quad \Delta(\tK_h') = \tK_h' \otimes \tK_h'.
 \end{split}
\end{align}

Analogously as for $\tU$, the quantum group $\bU$ is defined to be the $\K(q)$-algebra generated by $E_i,F_i, K_h^{\pm 1}$, for all $i\in \I, h\in Y$, subject to the relations \eqref{q serre}, \eqref{Q1}--\eqref{eq:EK}  (with the relations involving $K'_{h'}$ ignored),
and with \eqref{Q4} replaced by $[E_i,F_j]=\delta_{ij}\frac{K_{i}-K_{i}^{-1}}{q_i-q_i^{-1}}$.
The comultiplication $\Delta$ for $\U$ is modified from \eqref{eq:Delta} 
 with $\tK_i$ and $\tK_i'$ replaced by $K_{i}$ and $K_{i}^{-1}$, respectively. (Beware that our $K_i$ has a different meaning from $K_i \in \U$ in \cite{Lu93}.)


Let $\widetilde{\bU}^+$ be the subalgebra of $\widetilde{\bU}$ generated by $E_i$ $(i\in \I)$, $\widetilde{\bU}^0$ be the subalgebra of $\widetilde{\bU}$ generated by $\tK_i, \tK_i'$ $(i\in \I)$, and $\widetilde{\bU}^-$ be the subalgebra of $\widetilde{\bU}$ generated by $F_i$ $(i\in \I)$, respectively.
The subalgebras $\bU^+$, $\bU^0$ and $\bU^-$ of $\bU$ are defined similarly. Then both $\widetilde{\bU}$ and $\bU$ have triangular decompositions:
\begin{align}
\widetilde{\bU} =\widetilde{\bU}^+\otimes \widetilde{\bU}^0\otimes\widetilde{\bU}^-,
\qquad
\bU &=\bU^+\otimes \bU^0\otimes\bU^-.
\end{align}
Clearly, ${\bU}^+\cong\widetilde{\bU}^+$, ${\bU}^-\cong \widetilde{\bU}^-$, and ${\bU}^0 \cong \widetilde{\bU}^0/(\tK_h \tK_h' -1 \mid   h\in Y)$. Note that $\U^+ =\cup_\mu \U^+_\mu$ is $\Z_{\ge 0} \I$-graded by setting $\deg E_i=\alpha_i$ for any $i\in\I$, where $\U^+_\mu:=\{x\in\U^+\mid \deg x=\mu\}$.

Denote by $r_{i}: \U^+ \rightarrow \U^+$ the unique $\K(q)$-linear maps \cite{Lu93}  such that
\begin{align}  \label{eq:rr}
r_{i}(1) = 0, \quad r_{i}(E_{j}) = \delta_{ij},
\quad r_{i}(xx') = xr_{i}(x') + q^{i \cdot \mu'}r_{i}(x)x',
\end{align}
for all $x \in \U^+_{\mu}$ and $x' \in \U^+_{\mu'}$. 


\subsection{The algebra $\Udot$}
   \label{subsection Udot}

Recall \cite[23.1]{Lu93} that  the modified form of $\bU$, denoted by $\Udot$, is a $\K(q)$-algebra (without 1)  generated by $\oldone_\la, E_i \oldone_\la$, $F_i \oldone_\la$, for $i\in \I, \la \in X$, where $\oldone_\la$ are orthogonal idempotents.
Note that $\Udot$ is naturally a $\bU$-bimodule \cite[23.1.3]{Lu93}, and in particular we have
\begin{align*}
K_h\oldone_\la=\oldone_\la K_h &=q^{\langle h,\la\rangle}\oldone_\la, \; \forall h\in Y.
\end{align*}

We have the mod $2$ homomorphism $\Z \rightarrow \Z_2, k \mapsto \ov k$, where $\Z_2=\{\bar{0},\bar{1}\}$. Let us fix an $i\in \I$.  Define
\begin{equation}
\Udot_{i,\ev}:=\bigoplus_{\la:\, \langle h_i,\la\rangle \in 2\Z}\,\Udot \oldone_{\la},
\qquad
\Udot_{i,\odd}:=\bigoplus_{\la:\, \langle h_i,\la\rangle \in1+2\Z}\, \Udot\oldone_{\la}.
\end{equation}
Then $\Udot= \,\Udot_{i,\ev}\oplus{\Udot_{i,\odd}}$.

For our later use, with $i\in \I$ fixed once for all, we need to keep track of the precise value $\langle h_i,\la\rangle$ in an idempotent $\oldone_\la$ but do not need to  know which specific weights $\la$ are used. Thus it is convenient to introduce the following notation
\begin{align}
 \label{eq:1star}
\onestar_m =\onestar_{i,m}, \qquad \text{ for }m\in \Z,
\end{align}
to denote an idempotent $\oldone_\la$ for some $\la\in X$ such that $\m=\langle h_i,\la\rangle$.
In this notation, the identities in \cite[23.1.3]{Lu93} can be written as follows: for any $\m\in\Z$, $a,b\in\Z_{\geq0}$, and $i\neq j\in \I$,
\begin{align}
E_i^{(a)}\onestar_{i,\m} &=\onestar_{i,\m+2a} E_i^{(a)},\quad F_i^{(a)}\onestar_{i,\m}=\onestar_{i,\m-2a} F_i^{(a)};
\label{eqn: idempotent Ei Fi}\\
E_j\onestar_{i,\m} &=\onestar_{i,\m+a_{ij}}E_j,  \qquad F_j\onestar_{i,\m}=\onestar_{i,\m-a_{ij}} F_j;
  \label{eqn: idempotent Ej Fj}\\
F_{i}^{(a)} E_i^{(b)}\onestar_{i,\m} &= \sum_{d=0}^{\min\{a,b\}} \qbinom{a-b-\m}{d}_{i}  E_i^{(b-d)} F_i^{(a-d)}\onestar_{i,\m};
  \label{eqn:commutate-idempotent3}\\
E_i^{(a)} F_i^{(b)}\onestar_{i,\m} &=\sum_{d=0}^{\min\{a,b\}} \qbinom{a-b+\m}{d}_{i}  F_i^{(b-d)} E_i^{(a-d)}\onestar_{i,\m}
  \label{eqn:commutate-idempotent4}.
\end{align}
From now on, we shall always drop the index $i$ to write the idempotents as $\onestar_m$.

\begin{rem}
  \label{rem:u=0}
If $u\in\U$ satisfies $u\onestar_{2k-1}=0$ for all possible idempotents $\onestar_{2k-1}$ with $k\in\Z$ (or respectively,  $u\onestar_{2k}=0$ for all possible $\onestar_{2k-1}$ with $k\in\Z$), then $u=0$.
\end{rem}
\subsection{The $\imath${}quantum groups $\tUi$ and $\Ui$}
  \label{subsec:iQG}

Let $\tau$ be an involution of the Cartan datum $(\I, \cdot)$; we allow $\tau ={\rm id}$.
Let $\I_{\bullet} \subset \I$ be a Cartan subdatum of {\em finite type}. Let $W_{\I_\bullet}$ be the Weyl subgroup for $(\I_{\bullet}, \cdot)$ with $w_{\bullet}$ as  its longest element. Let $\rho^\vee_{\bullet}$ be half the sum of all positive coroots associated to $(\I_{\bullet}, \cdot)$. We shall denote
\begin{equation}
\label{eq:white}
 \I_{\circ} = \I \backslash \I_{\bullet}.
\end{equation}

A pair $(\I_{\bullet}, \tau)$ is called {\em admissible} (cf. \cite[Definition~2.3]{Ko14}) if 
\begin{itemize}
	\item	[(1)]	$\tau (\I_{\bullet}) = \I_{\bullet}$;
	\item	[(2)]	The action of $\tau$ on $\I_{\bullet}$ coincides with the action of $-w_{\bullet}$;
	\item	[(3)]	If $j \in \I_{\circ}$ and $\tau(j) = j$, then $\langle \rho^\vee_{\bullet}, j' \rangle \in \Z$.
\end{itemize}
All pairs $(\I_{\bullet}, \tau)$ considered in this paper are admissible.

Following and slightly generalizing \cite{LW19a}, we define a {\em universal $\imath$quantum group} $\widetilde{\bU}^\imath$ to be the $\K(q)$-subalgebra of the Drinfeld double $\tU$ generated by $E_\ell, F_\ell, \tK_\ell, \tK'_\ell$, for $\ell \in \Iblack$, and
\[
B_i= F_i + \T_{\wb} (E_{\btau i}) \tK_i',
\qquad \tk_i= \tK_i \tK_{\btau i}', \quad \forall i \in \Iwhite.
\]
Here $\T_w$, for $w\in W_{\Iblack}$, corresponds to $\T''_{w,+1}$ in \cite[Ch. 37]{Lu93}; see \cite[Proposition~ 2.1]{BW18c}.
Then $\tUi$ is a coideal subalgebra of $\tU$ in the sense that $\Delta: \tUi \rightarrow \tUi\otimes \tU$. 

\begin{lem}
The elements $\tK_\ell \tK'_\ell$ ($\ell \in \Iblack$),
$\tk_i$ (for $\btau i=i \in \Iwhite$) and $\tk_i \tk_{\btau i}$  (for $\btau i\neq i \in \Iwhite$) are central in $\tUi$.
\end{lem}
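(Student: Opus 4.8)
The plan is to verify centrality by checking that each of the three families of elements commutes with the algebra generators $E_\ell, F_\ell, \tK_\ell, \tK'_\ell$ (for $\ell \in \Iblack$) and $B_i$ (for $i \in \Iwhite$). Since $\tUi$ is generated by these elements, commuting with all of them is equivalent to being central. The elements in question all lie in $\tU^0$ (the torus part), so I would work inside the Drinfeld double $\tU$ using the defining relations \eqref{Q1}--\eqref{eq:EK}.

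First I would dispose of the easy cases. The elements $\tK_\ell \tK'_\ell$ for $\ell \in \Iblack$ are already known to be central in $\tU$ (as noted after \eqref{q serre}), hence in particular central in the subalgebra $\tUi$; I would just cite this. For the elements $\tk_i = \tK_i \tK'_{\tau i}$, I would compute the weight of $\tk_i$ acting by conjugation. Using \eqref{eq:EK}--\eqref{eq:K2}, conjugation of $\tk_i$ past a root vector $E_j$ (resp. $F_j$) produces the scalar $q^{\langle h_i,\alpha_j\rangle - \langle h_{\tau i},\alpha_j\rangle}$ (resp. its inverse). Thus $\tk_i$ is central precisely when this exponent vanishes for all relevant $j$, i.e. when $\langle h_i - h_{\tau i}, \alpha_j\rangle = 0$.

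The key computation is therefore to confirm these vanishing conditions using the admissibility of $(\Iblack,\tau)$. For $\tk_i$ with $\tau i = i \in \Iwhite$, the exponent $\langle h_i - h_{\tau i},\alpha_j\rangle = \langle h_i - h_i, \alpha_j\rangle = 0$ automatically, so $\tk_i$ commutes with every $E_j, F_j$ and trivially with all $\tK_h, \tK'_h$, hence is central in $\tU$ and a fortiori in $\tUi$. For $\tk_i \tk_{\tau i}$ with $\tau i \neq i$, I would compute the conjugation weight to be governed by $\langle h_i + h_{\tau i} - h_{\tau i} - h_{\tau\tau i}, \alpha_j\rangle = \langle h_i + h_{\tau i} - h_{\tau i} - h_i, \alpha_j\rangle = 0$ (using $\tau^2 = 1$), so again it commutes with all $E_j, F_j$ and is central in $\tU$. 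The subtlety to handle carefully is the commutation with the generators $B_i = F_i + \T_{\wb}(E_{\tau i})\tK_i'$ rather than with the bare $E_i, F_i$; here I would argue that since $\tk_i$ (and $\tk_i\tk_{\tau i}$) is already central in the full double $\tU$, it automatically commutes with any element of $\tU$, including $B_i$ and the twisted term $\T_{\wb}(E_{\tau i})\tK_i'$.

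The main obstacle is conceptual rather than computational: one must be careful that $\tk_i$ and $\tk_i\tk_{\tau i}$ are being claimed central in $\tU$ itself, not merely in $\tUi$, and this stronger statement is what makes the argument clean. The potentially delicate point is whether $\tk_i$ for $\tau i = i$ interacts nontrivially with the black generators $E_\ell, F_\ell$ ($\ell \in \Iblack$); but since centrality in $\tU$ subsumes commutation with all generators of $\tU$ including these, no separate check is needed. I expect the whole proof to reduce to the two short weight computations above together with the observation that $\tK_\ell\tK'_\ell$ is already central, so the essential content is verifying $\langle h_i - h_{\tau i}, \alpha_j\rangle = 0$ in the two cases, which follows immediately from $\tau$ being an involution.
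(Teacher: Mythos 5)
Your proof is correct, and since the paper states this lemma without proof there is nothing to compare it against; the reasoning you give (centrality in the full Drinfeld double $\tU$, which subsumes commutation with the $B_i$ and with the black generators) is exactly the right strategy. Two small remarks. First, the weight computations can be short-circuited entirely: for $\btau i = i$ one has $\tk_i = \tK_i\tK'_{\btau i} = \tK_i\tK'_i$, and for $\btau i \neq i$ one has $\tk_i\tk_{\btau i} = (\tK_i\tK'_i)(\tK_{\btau i}\tK'_{\btau i})$, so all three families are literally products of the elements $\tK_j\tK'_j$ already noted to be central in $\tU$ after \eqref{q serre}; no conjugation weights need to be computed. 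Second, a minor imprecision in your intermediate formula: since $\tK_i = K_{h_i}^{\epsilon_i}$, the conjugation scalar of $\tk_i$ on $E_j$ is $q^{\epsilon_i\langle h_i,\alpha_j\rangle - \epsilon_{\btau i}\langle h_{\btau i},\alpha_j\rangle} = q^{\,i\cdot j - \btau i\cdot j}$ rather than $q^{\langle h_i - h_{\btau i},\alpha_j\rangle}$; this still vanishes when $\btau i = i$ and telescopes to zero for $\tk_i\tk_{\btau i}$, so your conclusions are unaffected, but the exponent as written omits the $\epsilon$ factors.
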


Let $\bvs=(\vs_i)\in  (\K(q)^\times)^{\Iwhite}$ be such that $\vs_i=\vs_{\btau i}$ whenever $a_{i, \btau i}=0$.
Let $\Ui:=\Ui_{\bvs}$ be the $\K(q)$-subalgebra of $\bU$ generated by $E_\ell, F_\ell, \tK_\ell^{\pm 1}$, for $\ell \in \Iblack$, and
\[
B_i= F_i+\vs_i \T_{\wb} (E_{\btau i})K_i^{-1}\; (\forall i \in \Iwhite),
\quad
k_j= K_jK_{\btau j}^{-1} \; (\forall \btau j\neq j \in \Iwhite).
\]
It is known \cite{Le99, Ko14} that $\bU^\imath$ is a right coideal subalgebra of $\bU$, and $(\bU,\Ui)$ is called a \emph{quantum symmetric pair} ({\em QSP} for short), as they specialize at $q=1$ to a symmetric pair.

The following is a $\tUi$-variant of an anti-involution $\sigma_\imath$ on $\Ui$ in \cite[Proposition 3.13]{BW18c}. 

 \begin{lem}
   \label{lem:isigma}
There exists a $\K(q)$-linear anti-involution $\sigma_\imath$ of the algebra $\tUi$ such that
\begin{align*}
F_\ell \mapsto F_{\tau \ell},
\;\;
E_\ell \mapsto E_{\tau \ell},
\;\;
K_\ell \mapsto K'_{\tau \ell} \;\; (\forall \ell \in \Iblack);
\quad
B_j \mapsto B_{\tau j},
\;\;
\tk_j \mapsto \tk_j \;\; (\forall j \in \Iwhite).
\end{align*}
 \end{lem}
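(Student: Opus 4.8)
The plan is to realize $\sigma_\imath$ as the restriction to $\tUi$ of an explicit $\K(q)$-algebra anti-automorphism $\Phi$ of the ambient Drinfeld double $\tU$, exactly paralleling the construction of the anti-involution on $\Ui$ in \cite[Proposition~3.13]{BW18c}, with the extra bookkeeping forced by the doubled Cartan $\tK_i,\tK'_i$ and the central elements $\tk_i$. The candidate $\Phi$ is a $\tau$-twisted Chevalley-type map: on the Cartan part it swaps $\tK_h\mapsto \tK'_{\tau h}$, $\tK'_h\mapsto \tK_{\tau h}$, and on $E_i,F_i$ it interchanges $E_i$ and $F_i$ up to Cartan factors (an antipode-like map), suitably conjugated by the braid symmetry $\T_{\wb}$ so as to be compatible with the very definition $B_i=F_i+\T_{\wb}(E_{\tau i})\tK'_i$. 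The Cartan swap is forced by the requirement $\tk_i\mapsto \tk_i$: under it one computes $\Phi(\tk_i)=\Phi(\tK'_{\tau i})\Phi(\tK_i)=\tK_i\tK'_{\tau i}=\tk_i$. First I would pin $\Phi$ down on generators and check that it preserves each defining relation \eqref{Q1}, \eqref{eq:EK}, \eqref{eq:K2}, \eqref{Q4}, \eqref{q serre} of $\tU$. The commutator relation \eqref{Q4} and the relations \eqref{eq:EK}--\eqref{eq:K2} are verified by direct computation, using that $\tau$ preserves the bilinear form (so $a_{\tau i,\tau j}=a_{ij}$ and $\epsilon_{\tau i}=\epsilon_i$), while \eqref{q serre} is preserved because reversing monomials sends $S_{ij}$ to $S_{\tau i,\tau j}$ up to an overall sign. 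As a grounding instance, in the split case ($\Iblack=\emptyset$, $\tau=\id$) one may take $\Phi(E_i)=\tK_i'^{-1}F_i$, $\Phi(F_i)=E_i\tK'_i$, $\Phi(\tK_h)=\tK_h$, $\Phi(\tK'_h)=\tK'_h$; a short check of \eqref{eq:EK}--\eqref{q serre} confirms this is an anti-automorphism, and it interchanges the two summands of $B_i$, giving $\Phi(B_i)=B_i$ and $\Phi(\tk_i)=\tk_i$.

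The heart of the matter is the verification that $\Phi$ maps the generators of $\tUi$ into $\tUi$ with exactly the stated values; the only substantial checks are $\Phi(E_\ell)=E_{\tau\ell}$, $\Phi(F_\ell)=F_{\tau\ell}$ for $\ell\in\Iblack$ and $\Phi(B_i)=B_{\tau i}$ for $i\in\Iwhite$, both of which hinge on the interaction of $\Phi$ with $\T_{\wb}$. For the $\Iblack$-generators one invokes the admissibility condition that $-\wb$ acts as $\tau$ on $\Iblack$, which is precisely what makes the $\T_{\wb}$-conjugation turn the Chevalley-type image of $E_\ell$ back into $E_{\tau\ell}$ (and likewise for $F_\ell$); this is the reason $\Phi$ behaves as a fixing map on $\Iblack$ even though it is globally built from a swap. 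For $B_i$ one computes $\Phi(B_i)=\Phi(F_i)+\Phi(\tK'_i)\Phi(\T_{\wb}(E_{\tau i}))$, where the summand $\Phi(F_i)$ produces the raising part $\T_{\wb}(E_i)\tK'_{\tau i}$ of $B_{\tau i}$, while $\Phi(\tK'_i)\Phi(\T_{\wb}(E_{\tau i}))$ collapses, after the Drinfeld-double Cartan factors cancel, to the lowering part $F_{\tau i}$. I expect the genuinely delicate point to be this last collapse: controlling $\Phi(\T_{\wb}(E_{\tau i}))$ requires the commutation of the anti-automorphism with Lusztig's braid operator $\T_{\wb}$ \cite[Ch.~37]{Lu93}, and then tracking the resulting $\tK,\tK'$ contributions so that the unwanted Cartan factors cancel precisely. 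This Cartan bookkeeping is the new feature absent from the $\Ui$ computation of \cite[Proposition~3.13]{BW18c}, and it is the main obstacle.

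Once $\Phi(\tUi)\subseteq\tUi$ with the stated generator values is established, the relation $\Phi^2=\id$ follows by evaluating on generators, using $\tau^2=\id$: $\Phi^2(B_i)=\Phi(B_{\tau i})=B_i$, $\Phi^2(\tk_i)=\tk_i$, and similarly on the $\Iblack$-generators, so that $\sigma_\imath:=\Phi|_{\tUi}$ is an anti-involution of $\tUi$. As a fallback that avoids extending to all of $\tU$, one can instead work from a presentation of $\tUi$ (the generators $E_\ell,F_\ell,\tK_\ell,\tK'_\ell,B_i,\tk_i$ together with the $\imath$Serre relations \eqref{iSe}, the $\Iblack$-relations, and the $\tk$-relations) and check directly that the stated assignment preserves every relation; here the only nontrivial point is that $\sigma_\imath$ sends each $\imath$Serre relation to the reversed ($r\leftrightarrow s$, with $i\mapsto\tau i$ and $j\mapsto\tau j$) $\imath$Serre relation, which is again valid. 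Either route reduces to the same core difficulty: the compatibility of the anti-automorphism with the braid symmetry $\T_{\wb}$, combined with the Drinfeld-double Cartan bookkeeping that distinguishes the $\tUi$ statement from its $\Ui$ counterpart.
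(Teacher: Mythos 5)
The paper does not actually prove this lemma: it is stated as a $\tUi$-variant of \cite[Proposition 3.13]{BW18c} and the verification is deferred entirely to that reference, so there is no in-paper argument to compare yours against. Judged on its own, your proposal has a genuine gap located exactly where you yourself flag ``the main obstacle,'' and the candidate map $\Phi$ is never pinned down consistently enough for your relation checks to apply to it.

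Concretely: the map whose compatibility with \eqref{Q1}--\eqref{q serre} you verify in the first paragraph, namely $E_i\mapsto E_{\tau i}$, $F_i\mapsto F_{\tau i}$, $\tK_h\mapsto\tK'_{\tau h}$, $\tK'_h\mapsto\tK_{\tau h}$, is indeed an anti-automorphism of $\tU$, but it does not restrict to the desired $\sigma_\imath$: it gives $\Phi(B_i)=F_{\tau i}+\tK_{\tau i}\,\Phi\big(\T_{\wb}(E_{\tau i})\big)$, and since $\tK_{\tau i}$ and $\tK'_{\tau i}$ are independent generators of the Drinfeld double (only their product is central), commuting the Cartan factor past the positive part can never convert $\tK_{\tau i}(\cdots)$ into $(\cdots)\tK'_{\tau i}$, so $\Phi(B_i)\neq B_{\tau i}$. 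To repair this you switch, for $i\in\Iwhite$, to an $E\leftrightarrow F$-swapping assignment (your split-case example), but then $\Phi$ treats white and black nodes differently and the checks of your first paragraph no longer cover the mixed relations: for $i\in\Iwhite$, $\ell\in\Iblack$ the images of $S_{i\ell}(F_i,F_\ell)=0$ and of $[E_\ell,F_i]=0$ relate a positive-part element times Cartan to $E_{\tau\ell},F_{\tau\ell}$ and are not ``reversed Serre relations up to sign''; they need genuine verification. Your claim that the Cartan swap is forced by $\tk_i\mapsto\tk_i$ is moreover contradicted by your own split example, where the Cartan part is the identity and $\tk_i$ is still fixed, so the two descriptions of $\Phi$ are not restrictions of one coherent prescription. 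Finally, the collapse of $\Phi(\tK'_i)\Phi\big(\T_{\wb}(E_{\tau i})\big)$ to $F_{\tau i}$ --- the one computation that actually yields the lemma --- is asserted rather than performed; it requires specifying $\Phi(E_{\tau i})$ for white $i$ and controlling how the anti-automorphism intertwines $\T_{\wb}$ with $\T_{\wb}^{-1}$ via \cite[Ch.~37]{Lu93}, none of which is carried out. The fallback via a presentation of $\tUi$ is also incomplete as stated, since the relation list you propose (the $\imath$Serre relations \eqref{iSe} together with the $\Iblack$- and $\tk$-relations) is not known to present $\tUi$ for a general admissible pair $(\Iblack,\tau)$; \eqref{iSe} only concerns $i$ with $\tau i=i=\wb i$, and the available Serre presentations are confined to the quasi-split case.
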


\subsection{The $\imath$divided powers}

For  $i\in \I$ with $\btau i\neq i$,  following \cite{BW18b},  we define the {\em $\imath${}divided powers} of $B_i$ to be
\begin{align*}
  B_i^{(m)}:=B_i^{m}/[m]_{i}^!, \quad \forall m\ge 0, \qquad (\text{if } i \neq \tau i).
\end{align*}

For $i\in \Iw$ with $\btau i= i =\wb i$, we defined the {\em $\imath${}divided powers} of $B_i$ in $\Ui =\Ui_{\bvs}$ \cite{CLW18} (which generalizes \cite{BW18b, BeW18} where $\vs_i=q_i^{-1}$) to be
\begin{align}
\ff_{i,\odd}^{(m)} &=\frac{1}{[m]_{i}^!}\left\{ \begin{array}{ccccc} B_i\prod_{j=1}^k (B_i^2-q_i\vs_i[2j-1]_{i}^2 ) & \text{if }m=2k+1,\\
\prod_{j=1}^k (B_i^2-q_i\vs_i[2j-1]_{i}^2) &\text{if }m=2k; \end{array}\right.
  \label{eq:iDPoddUi} \\
  \notag\\
\ff_{i,\ev}^{(m)} &= \frac{1}{[m]_{i}^!}\left\{ \begin{array}{ccccc} B_i\prod_{j=1}^k (B_i^2-q_i\vs_i[2j]_{i}^2 ) & \text{if }m=2k+1,\\
\prod_{j=1}^{k} (B_i^2-q_i\vs_i[2j-2]_{i}^2) &\text{if }m=2k. \end{array}\right.
 \label{eq:iDPevUi}
\end{align}
Replacing $\vs_i$ by $\tk_i$ and abusing notations, we define the {\em $\imath${}divided powers} of $B_i$ in $\tUi$ to be
\begin{align}
\ff_{i,\odd}^{(m)} &=\frac{1}{[m]_{i}^!}\left\{ \begin{array}{ccccc} B_i\prod_{j=1}^k (B_i^2-q_i\tk_i[2j-1]_{i}^2 ) & \text{if }m=2k+1,\\
\prod_{j=1}^k (B_i^2-q_i\tk_i[2j-1]_{i}^2) &\text{if }m=2k; \end{array}\right.
  \label{eq:iDPodd} \\
  \notag\\
\ff_{i,\ev}^{(m)} &= \frac{1}{[m]_{i}^!}\left\{ \begin{array}{ccccc} B_i\prod_{j=1}^k (B_i^2-q_i\tk_i[2j]_{i}^2 ) & \text{if }m=2k+1,\\
\prod_{j=1}^{k} (B_i^2-q_i\tk_i[2j-2]_{i}^2) &\text{if }m=2k. \end{array}\right.
 \label{eq:iDPev}
\end{align}
We set $B_{i,\ov{p}}^{(m)}=0$ for any $m<0.$

These $\imath$divided powers in $\tUi$ satisfy the following recursive relations:
\begin{align}
\label{lem:dividied power}
B_{i}B_{i,\ov{p}}^{(r)}=\left\{\begin{array}{llll}
&[r+1]_{i}B_{i,\ov{p}}^{(r+1)}                                 & \text{if}\ \ov{p}\neq \ov{r};\\
&[r+1]_{i}B_{i,\ov{p}}^{(r+1)}+q_i\tk_i[r]_{i}B_{i,\ov{p}}^{(r-1)} & \text{if}\ \ov{p}= \ov{r}.
\end{array}\right.
\end{align}
(The recursive relations for $\imath$divided powers in $\Ui$ are obtained by substituting $\tk_i$ with $\vs_i$.)

\begin{rem}
The results in this paper are formulated for $\tUi$, and their counterparts for $\Ui$ can be obtained (with the same proofs) by the simple substitution of $\tk_i$ with $\vs_i$.
\end{rem}

The $\imath$Serre relation in $\tUi$ below formally takes the same form as for $\Ui$ \cite[(3.9)]{CLW18}. No additional condition on $j\in \Iw$ is imposed, thanks to \cite[Remark~ 3.5]{CLW18}.

\begin{prop}
   \label{prop:iSerre}
The following $\imath$Serre relations hold in $\tUi$, for $j\neq i\in \Iw$ with $\wb i = \tau i =i$:
\begin{align*}
\sum_{r+s =1-a_{ij}} (-1)^r  B_{i,\overline{p}}^{(r)}B_j B_{i,\overline{p} +\overline{a_{ij}}}^{(s)} &=0.
\end{align*}
\end{prop}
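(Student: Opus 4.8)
The plan is to deduce the identity in $\tUi$ from its counterpart in the parametrized algebra $\Ui=\Ui_\bvs$ by specializing the central generator $\tk_i$, and to obtain the latter by the PBW-expansion strategy of \cite{CLW18}, adapted to the present case $\tau i=i\neq\wb i$. Here $B_i=F_i+\T_{\wb}(E_i)\tK_i'$ with $\wb i\neq i$, so that the positive part $\T_{\wb}(E_i)$ is a root vector of weight $w_\bullet\alpha_i$ rather than $E_i$ itself; the case (iii) $\imath$divided powers $B_i^{(m)}$ carry no parity, so by our convention the index $\ov p$ is vacuous and the asserted identity reads $\sum_{r+s=1-a_{ij}}(-1)^r B_i^{(r)}B_jB_i^{(s)}=0$.

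First I would reduce from $\tUi$ to $\Ui$. Since $\tau i=i$, the element $\tk_i$ is central in $\tUi$ (Lemma above), and both the $\imath$divided powers $B_i^{(m)}$ and the left-hand side $X$ of the claimed identity are polynomial in $\tk_i$. The assignment $\tk_i\mapsto\vs_i$ realizes the central reduction $\tUi\twoheadrightarrow\Ui_\bvs$, carrying each $B_i^{(m)}$ to the corresponding $\imath$divided power in $\Ui_\bvs$ and hence $X$ to the left-hand side of the $\imath$Serre relation there. Expanding $X$ in the PBW basis of $\tUi$ writes it as a polynomial in $\tk_i$ with coefficients in a $\tk_i$-free complement; as $X$ specializes to $0$ for all (infinitely many) values $\vs_i\in\K(q)^\times$, a Vandermonde argument forces these coefficients, and hence $X$, to vanish. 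It therefore suffices to prove $X=0$ in $\Ui_\bvs$ for every $\bvs$.

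To prove $X=0$ in $\Ui_\bvs$ I would follow the template of \cite{CLW18}: expand each $\imath$divided power $B_i^{(r)}$ in a PBW-type basis of the subalgebra attached to $i$, commute $B_j$ to the right past this subalgebra, collect coefficients, and match them against a family of $q$-binomial identities, whereupon the $q$-Serre relations of $\bU$ annihilate the leading terms. The only inputs this scheme consumes are (a) the recursion expressing $B_iB_i^{(r)}$ through $B_i^{(r\pm1)}$ and $\tk_i$, and (b) the commutation of $B_i$ with $B_j$; both are supplied in case (iii) by \cite{BW18c}. This is precisely the mechanism behind \cite[Remark~3.5]{CLW18}, which is what permits $j\in\I$ to be arbitrary.

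The main obstacle is the interplay of (b) with the $\bU$-level Serre relations in the final step. In case (i) the element $B_i$ generates a copy of $\imath\mathfrak{sl}_2$ and the whole computation is genuinely rank one; in case (iii) the weight $w_\bullet\alpha_i$ of $\T_{\wb}(E_i)$ differs from $\alpha_i$, so $B_i$ no longer closes into $\imath\mathfrak{sl}_2$ and the ambient subalgebra must involve the generators indexed by $\Iblack$ as well. The delicate point is therefore to verify that the twisted positive part $\T_{\wb}(E_i)$ obeys the appropriate $q$-Serre relation against the positive part of $B_j$. I would obtain this by applying the algebra automorphism $\T_{\wb}$ to the $q$-Serre relations \eqref{q serre} of $\tU$, which sends the simple root vectors $E_i$ ($i\in\Iwhite$) into $\tU^+$ and thereby converts the $E$-Serre relations into relations among the twisted vectors; once this is secured, the off-diagonal contributions cancel and the computation collapses onto the same $q$-binomial identities as in \cite{CLW18}.
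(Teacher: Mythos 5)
Your first step---descending from $\tUi$ to $\Ui_{\bvs}$ and letting the parameter vary---is exactly the paper's mechanism: after a base change adjoining roots of the $\vs_\ell$, the paper constructs a morphism $\pi\colon \tUi \rightarrow \Ui_{\bvs}$ with $\pi(\tk_i)=\vs_i$ which matches the $\imath$divided powers, notes $\pi(L)=0$, and concludes $L=0$ as $\vs_i$ varies; your Vandermonde formulation of that last step is fine. But at the second stage the paper performs \emph{no} computation: the $\Ui$-level relation is quoted verbatim from \cite[(3.9), Remark~3.5]{CLW18}, the Remark being what removes all conditions on $j$. The substance of your proposal---re-running the PBW expansion---replaces a citation, and it is built on a misreading of the hypothesis. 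The printed condition ``$\wb i\neq \tau i=i$'' is a typo for $\wb i=\tau i=i$: the displayed identity carries the parities $\ov{p}$ and $\ov{p}+\ov{a_{ij}}$, and those $\imath$divided powers \eqref{eq:iDPodd}--\eqref{eq:iDPev} are defined only when $\tau i=i=\wb i$; the cited relation of \cite{CLW18} is precisely that case; and everywhere else in the paper (Theorems {\bf A}--{\bf D}) the outer index satisfies $\btau i=i=\wb i$, the trichotomy (i)--(iii) of \S\ref{subsec:main} applying to the \emph{inner} index $j$ only. Declaring $\ov{p}$ ``vacuous'' contradicts the very statement you are proving.

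Even taken on its own terms, your case-(iii) computation has a genuine gap. For $\tau i=i\neq\wb i$ the $\imath$divided powers are \eqref{eq:bn}--\eqref{eq:Bb}, with $Z_i$-corrections; no analogue of the rank-one expansion formulas (Lemma~\ref{lem:iDPdot}) exists for them, and $F_i$ and $\T_{\wb}(E_i)\tK_i'$ do not $q$-commute---Lemma~\ref{lem:BWc}(3) produces the extra term $Z_i\tK_i\tK_i'$---so the claim that everything ``collapses onto the same $q$-binomial identities'' as in \cite{CLW18} is unsubstantiated. Moreover, applying $\T_{\wb}$ to the $q$-Serre relations \eqref{q serre} twists \emph{both} root vectors and changes the relevant Cartan pairing (the weight of $\T_{\wb}(E_i)$ is $w_\bullet\alpha_i$, not $\alpha_i$), so it does not deliver the needed relations between $\T_{\wb}(E_i)$ and the untwisted constituents $F_j$, $E_{\tau j}$ of $B_j$; contrast Proposition~\ref{prop:jfixed2}, where this twisting trick works only because there $\tau i=i=\wb i$ forces $\T_{\wb}(E_i)=E_i$. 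The repair is simple: read the hypothesis as $\wb i=\tau i=i$, keep your parameter-variation step, and replace your second stage by the citation \cite[(3.9), Remark~3.5]{CLW18}, as the paper does.
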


\begin{proof}

Denote the LHS of the identity in $\tUi$ in the proposition by $L$.
Let $\bvs=(\vs_\ell)\in  (\K(q)^\times)^{\Iwhite}$ be such that $\vs_\ell=\vs_{\btau \ell}$ whenever $a_{\ell, \btau \ell}=0$. By a base change, all the algebras in this proof will be assumed to be over an extension field of $\K(q)$ which includes $\sqrt[\leftroot{-2}\uproot{2}2\epsilon_i]{\vs_\ell}$, for $\ell \in \I$.
By \eqref{eq:YY} there is a quotient morphism $\pi: \tU \rightarrow \U$, which sends  $F_\ell \mapsto F_\ell, E_\ell \mapsto \sqrt{\vs_\ell} E_\ell, K_{h_\ell} \mapsto   \sqrt[\leftroot{-2}\uproot{2}2\epsilon_i]{\vs_\ell} K_{h_\ell}, K'_{h_\ell} \mapsto \sqrt[\leftroot{-2}\uproot{2}2\epsilon_i]{\vs_\ell}  K_{h_\ell}^{-1}, K_{h'} \mapsto  K_{h'}$, for $\ell \in \I, h' \in Y'$;
note in particular $\pi$ sends $\tK_\ell \mapsto \sqrt{\vs_\ell} K_\ell, \tK'_\ell \mapsto \sqrt{\vs_\ell} K_\ell^{-1}$, for $\ell \in \I$.
By restriction we obtain a morphism $\pi: \tUi \rightarrow \Ui_{\bvs}$ which sends $B_i= F_i + \T_{\wb} (E_{i}) \tK_i'$ to $B_i = F_i+\vs_i \T_{\wb} (E_{i})K_i^{-1}$. Then $\pi$ matches the corresponding $\imath$divided powers of $B_i$ and thus maps $L$ to 0 by the $\imath$Serre relation in $\Ui$ \cite[(3.9), Remark~ 3.5]{CLW18}.
As the scalar $\vs_i$ varies, we conclude that $L=0$.
(The argument above actually shows that the $\imath$Serre relations for $\Ui$ and $\tUi$ imply each other.)
\end{proof}

\begin{rem}
The $\imath$Serre relations for $\Ui$ in case $|a_{ij}| \le 3$  were known earlier in different forms \cite{Le02, BK19}.
\end{rem}

\subsection{$q$-binomial identities from \cite{CLW18}}

For $w, p_0, p_1, p_2\in \bbZ$ and $u,\ell  \in \bbZ_{\ge 0}$, we define (cf. \cite[(5.1)]{CLW18})
\begin{align}
\label{eq:Twxyu}
G(w,u, & \ell;p_0,p_1,p_2):=(-1)^{w}q^{u^2-wu+\ell u}\\
&\cdot \left\{ \sum_{\substack{b,c,e\geq0 \\ b+c+e=u}}
\sum^{\ell }_{\substack{t=0 \\ 2\mid(t+w-b) }}
q^{-t(\ell +u-1)-u(c+e)+2c+rp_0+2cp_1+2ep_2} \right.  \notag\\ \notag
&\qquad \cdot \qbinom{\ell }{t}_q\qbinom{w+t+p_0}{b}_q \qbinom{\frac{w+t-b}{2}+p_1}{c}_{q^2}\qbinom{\frac{w+t-b}{2}+p_2}{e}_{q^2}\\  \notag
&\quad -\sum_{\substack{b,c,e\geq0 \\ b+c+e=u}}
\sum^{\ell }_{\substack{t=0 \\ 2\nmid(t+w-b) }}
q^{-t(\ell +u-1)-(u-1)(c+e)+rp_0+2cp_1+2ep_2}\\ \notag
& \left. \qquad \cdot \qbinom{\ell }{t}_q\qbinom{w+t+p_0}{b}_q \qbinom{1+\frac{w+t-b-1}{2}+p_1}{c}_{q^2}\qbinom{\frac{w+t-b-1}{2}+p_2}{e}_{q^2} \right\}.  \notag
\end{align}

\begin{lem} [\text{\cite[Lemma 5.1, Theorem 5.6]{CLW18}}]
\label{lem:Gwuxp0p1p2}
For any $w,p_0,p_1,p_2,k\in \bbZ$ and $u,\ell  \in \bbZ_{\ge 0}$,  the following identities hold:
\begin{align}
G(w,u,\ell +1; p_0,p_1,p_2) &=q^{u}G(w,u,\ell ;p_0,p_1,p_2)-q^{u-2\ell }G(w+1,u,\ell ;p_0,p_1,p_2); \label{eq:Gx+1w}\\
G(w,u,\ell ;p_0,p_1,p_2) &=q^{4ku}G(w+2k,u,\ell ;p_0-2k,p_1-k,p_2-k); \label{eq:Gk}\\
G(w+1,u,\ell ;p_0,p_1,p_2) &=q^{-2u}G(w,u,\ell ;p_0+1,p_2,p_1+1); \label{eq:Godd}\\
G(w,u,\ell ; p_0,p_1,p_2) &=0, \quad \text{ if } \ell >0. \label{eq:G=0}
\end{align}
\end{lem}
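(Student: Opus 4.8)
The plan is to establish the three structural identities \eqref{eq:Gx+1w}, \eqref{eq:Gk}, \eqref{eq:Godd} by direct manipulation of the defining double sum \eqref{eq:Twxyu}, and then to deduce the vanishing \eqref{eq:G=0} from them by induction on $\ell$. The shift identity \eqref{eq:Gk} is the most transparent: under the simultaneous substitution $w\mapsto w+2k$, $p_0\mapsto p_0-2k$, $p_1\mapsto p_1-k$, $p_2\mapsto p_2-k$, every internal $q$-binomial coefficient $\qbinom{w+t+p_0}{b}_q$, $\qbinom{\frac{w+t-b}{2}+p_1}{c}_{q^2}$, $\qbinom{\frac{w+t-b}{2}+p_2}{e}_{q^2}$ is literally unchanged, the parity constraint $2\mid(t+w-b)$ is preserved because $w$ moves by an even amount, and $(-1)^w$ is unchanged; only the prefactor $q^{u^2-wu+\ell u}$ and the internal weight carrying the parameters $p_0,p_1,p_2$ move, and collecting their exponents (using $b+c+e=u$) produces exactly the factor $q^{4ku}$. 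Thus \eqref{eq:Gk} is a bookkeeping check.

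For the parity-swap \eqref{eq:Godd} I would exploit that replacing $w$ by $w+1$ flips the parity condition $2\mid(t+w-b)$, so that the even sum for $G(w+1,\cdots)$ matches the odd sum for $G(w,\cdots)$ and vice versa; aligning the floors $\frac{w+t-b}{2}$ and $\frac{w+t-b-1}{2}$ forces the parameter change to $(p_0+1,p_2,p_1+1)$, and tracking the $q$-powers yields the scalar $q^{-2u}$. The recursion \eqref{eq:Gx+1w} is the key structural input, and I would prove it by the $q$-Pascal rule $\qbinom{\ell+1}{t}_q=q^t\qbinom{\ell}{t}_q+\qbinom{\ell}{t-1}_q$: the first piece reassembles $q^uG(w,u,\ell)$, the per-term factor $q^t$ absorbing the difference between the weight $q^{-t(\ell+u)}$ in $G(w,u,\ell+1)$ and the weight $q^{-t(\ell+u-1)}$ in $G(w,u,\ell)$; in the second piece the reindexing $t\mapsto t+1$ turns every $w+t$ into $(w+1)+t$ and flips the parity condition to that of $w+1$, reassembling $G(w+1,u,\ell)$, and collecting the remaining $q$-powers gives the coefficient $q^{u-2\ell}$.

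Finally, I would deduce the vanishing \eqref{eq:G=0} by induction on $\ell\ge 1$ from \eqref{eq:Gx+1w}: if $G(w',u,\ell)=0$ for all $w'$, then $G(w,u,\ell+1)=q^uG(w,u,\ell)-q^{u-2\ell}G(w+1,u,\ell)=0$, so only the base case $\ell=1$ needs proof. Since $q^{u-2\ell}=q^u$ at $\ell=0$, \eqref{eq:Gx+1w} gives $G(w,u,1)=q^u\big(G(w,u,0)-G(w+1,u,0)\big)$, so the base case is equivalent to the assertion that $G(w,u,0)$ is independent of $w$; by \eqref{eq:Godd} this is in turn equivalent to the purely $\ell=0$ identity $G(w,u,0;p_0,p_1,p_2)=q^{-2u}G(w,u,0;p_0+1,p_2,p_1+1)$. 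At $\ell=0$ the outer $t$-sum collapses to the single term $t=0$, leaving two triple sums over $b+c+e=u$, and I expect this base case to be the main obstacle. The natural attack is to perform the coupled $c$- and $e$-summations by the $q^2$-Chu--Vandermonde identity, collapsing each triple sum to a single sum over $b$, and then to evaluate the remaining $b$-sum by a further $q$-Vandermonde summation, thereby verifying the claimed symmetry under $p_1\leftrightarrow p_2$ and $p_0\mapsto p_0+1$. All the genuine combinatorial difficulty is concentrated here; the remaining steps are formal.
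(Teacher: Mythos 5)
First, note that the paper does not prove this lemma at all: it is imported verbatim from \cite[Lemma 5.1, Theorem 5.6]{CLW18}, so there is no in-paper argument to compare against, and any complete proof you give is necessarily "a different route" from the citation.

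Your treatment of the three structural identities is essentially correct and is the same formal bookkeeping that underlies \cite[Lemma 5.1]{CLW18}. For \eqref{eq:Gk} the exponent count $-2ku$ from the prefactor plus $-2k(b+c+e)=-2ku$ from the internal weight does give $q^{4ku}$; for \eqref{eq:Godd} the parity flip and the relabelling $c\leftrightarrow e$ work as you say; and for \eqref{eq:Gx+1w} the $q$-Pascal expansion of $\qbinom{\ell+1}{t}_q$ followed by the reindexing $t\mapsto t+1$ in the second piece does reassemble $q^uG(w,u,\ell)-q^{u-2\ell}G(w+1,u,\ell)$ (one caveat: for the balanced $q$-binomial used here the Pascal rule is $\qbinom{\ell+1}{t}_q=q^t\qbinom{\ell}{t}_q+q^{t-\ell-1}\qbinom{\ell}{t-1}_q$, not the version you wrote; the missing factor $q^{t-\ell-1}$ is exactly what produces the $-2\ell$ in the coefficient, so you must carry it). Your reduction of \eqref{eq:G=0} by induction on $\ell$ to the single statement that $G(w,u,0;p_0,p_1,p_2)$ is independent of $w$ is also the correct skeleton, and that $w$-independence is precisely the content of \cite[Proposition 5.3]{CLW18}, quoted in this paper as Lemma~\ref{lem:GH00}.

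The genuine gap is at the crux you yourself flag: the $\ell=0$ base case is asserted, not proved, and the mechanism you propose is doubtful. You hope the coupled $(c,e)$-sum collapses by $q^2$-Chu--Vandermonde to a single sum over $b$, which is then summed again. But the known value of the $\ell=0$ specialization, $G(w,u,0;0,p_1,p_2)=H(u;p_1,p_2)$ with $H(u;p_1,p_2)=\sum_{c+e=u}q^{2c+2cp_1+2ep_2}\qbinom{p_1}{c}_{q^2}\qbinom{p_2}{e}_{q^2}$, is itself an uncollapsed convolution whose exponent $2c+2cp_1+2ep_2$ is not of Vandermonde shape, so a clean two-stage Vandermonde collapse cannot be the whole story. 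The actual argument in \cite{CLW18} establishes the $w$-independence by a separate induction resting on an auxiliary $q$-binomial identity, not by summing the series in closed form. Since all of the content of \eqref{eq:G=0} (and hence of the Serre--Lusztig relations built on it) is concentrated in this base case, your proposal as written does not yet constitute a proof of the fourth identity; the first three are fine.
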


For $p_1,p_2\in \bbZ$ and $u \in \bbZ_{\ge 0}$, we define (cf. \cite[(5.11)]{CLW18})
\begin{align*}
H(u; p_1,p_2):=\sum_{\stackrel{c,e\geq0}{c+e=u}}q^{2c+2cp_1+2ep_2}\qbinom{p_1}{c}_{q^2}\qbinom{p_2}{e}_{q^2}.
\end{align*}

\begin{lem}
[\text{\cite[Proposition 5.3]{CLW18}}]
  \label{lem:GH00}
For any $w,p_1,p_2\in \bbZ$ and $u \in \bbZ_{\ge 0}$, we have
$G(w,u,0;0,p_1,p_2) =H(u;p_1,p_2)$. In particular, $G(w,u,0;0,p_1,p_2)$ is independent of $w$.
\end{lem}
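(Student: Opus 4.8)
The plan is to deduce the identity from the recursions in Lemma~\ref{lem:Gwuxp0p1p2} together with a single explicit evaluation, rather than attempting to evaluate $G(w,u,0;0,p_1,p_2)$ directly for general $w$ (which would force one to contend with the parity split $2\mid(w-b)$ versus $2\nmid(w-b)$ in the defining sum \eqref{eq:Twxyu} and a nontrivial double $q^2$-binomial summation). So I would separate the problem into proving $w$-independence and then pinning down the common value.

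First I would establish that $G(w,u,0;0,p_1,p_2)$ is independent of $w$. Specializing the recursion \eqref{eq:Gx+1w} at $\ell=0$ gives
\[
G(w,u,1;0,p_1,p_2)=q^{u}\,G(w,u,0;0,p_1,p_2)-q^{u}\,G(w+1,u,0;0,p_1,p_2).
\]
The left-hand side vanishes by \eqref{eq:G=0} (as $\ell=1>0$), so after cancelling $q^{u}$ we obtain $G(w,u,0;0,p_1,p_2)=G(w+1,u,0;0,p_1,p_2)$ for every $w$; hence the value is independent of $w$, which is the second assertion of the lemma.

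It then remains to compute the common value at the convenient point $w=0$. With $w=0$ and $\ell=0$ the inner sum over $t$ collapses to $t=0$, and the factor $\qbinom{w+t+p_0}{b}_{q}=\qbinom{0}{b}_{q}=\delta_{b,0}$ (using $p_0=0$) forces $b=0$; the parity condition $2\mid(t+w-b)$ is then satisfied while $2\nmid(t+w-b)$ is not, so only the first sum in \eqref{eq:Twxyu} survives. Substituting $b=0$, $\tfrac{w+t-b}{2}=0$, and $c+e=u$ turns each summand into $q^{-u^2+2c+2cp_1+2ep_2}\qbinom{p_1}{c}_{q^2}\qbinom{p_2}{e}_{q^2}$, whence the braces evaluate to $q^{-u^2}H(u;p_1,p_2)$. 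Multiplying by the prefactor $(-1)^{0}q^{u^2}$ yields $G(0,u,0;0,p_1,p_2)=H(u;p_1,p_2)$, and combining with the $w$-independence gives $G(w,u,0;0,p_1,p_2)=H(u;p_1,p_2)$ for all $w$.

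As for the main obstacle: given Lemma~\ref{lem:Gwuxp0p1p2}, the argument is essentially mechanical, and the only real insight is recognizing that \eqref{eq:Gx+1w} combined with the vanishing \eqref{eq:G=0} collapses the $w$-dependence for free. The genuine difficulty has already been absorbed into \eqref{eq:G=0}; without it, one would be driven into the direct general-$w$ evaluation, where the main obstacle would be managing the even/odd dichotomy on $w-b$ and reorganizing the resulting sum into the Vandermonde-type shape defining $H(u;p_1,p_2)$.
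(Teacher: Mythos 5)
The paper does not prove this lemma at all: it is quoted verbatim from \cite[Proposition 5.3]{CLW18}, so there is no in-paper proof to compare against. Judged on its own terms, your argument is correct and clean. The evaluation at $w=0$ is right: with $\ell=0$ the $t$-sum collapses to $t=0$, $\qbinom{0}{b}_q=\delta_{b,0}$ forces $b=0$ (killing the odd-parity sum since $t+w-b=0$), and the surviving exponent $-u^2+2c+2cp_1+2ep_2$ combines with the prefactor $q^{u^2}$ to give exactly $H(u;p_1,p_2)$. The reduction to $w=0$ via $0=G(w,u,1;0,p_1,p_2)=q^u\bigl(G(w,u,0;0,p_1,p_2)-G(w+1,u,0;0,p_1,p_2)\bigr)$ is precisely the manipulation the present paper itself performs in deriving \eqref{eq:w+1}, so within this paper's framework your proof is valid.

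The one caveat worth flagging is logical provenance. Your $w$-independence step rests entirely on the vanishing \eqref{eq:G=0}, which is \cite[Theorem 5.6]{CLW18}, whereas the statement you are proving is \cite[Proposition 5.3]{CLW18}; in the source the vanishing theorem comes after, and is essentially equivalent to, the $w$-independence of $G(w,u,0;\cdot)$ via the recursion \eqref{eq:Gx+1w}. So as an independent proof of the CLW18 result your argument is almost certainly circular: the ``genuine difficulty'' you say has been ``absorbed into \eqref{eq:G=0}'' is in fact the content of the lemma itself. A self-contained proof would have to establish $w$-independence directly — for instance by evaluating $G(w,u,0;0,p_1,p_2)$ for general $w$ and handling the parity split on $w-b$, which is the route you explicitly set out to avoid. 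Since the present paper treats both statements as imported black boxes, this does not invalidate your write-up here, but it should be acknowledged rather than presented as a genuine simplification.
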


\section{A key induction}
   \label{sec:induction}

In this section, we establish an inductive formula on Serre-Lusztig relations, which will be used for several times in later sections.

\subsection{Recursions for $\imath$divided powers}

\begin{lem}
  \label{lem:B2r}
For $r \ge 0$, we have
{\small\begin{align*}
B^{(2)}_{i,\ov{0}} B^{(r)}_{i,\ov{0}} =
\begin{cases}
\qbinom{r+2}{2}_{i} B^{(r+2)}_{i,\ov{0}} +  \frac{[r]_{i}^2}{[2]_{i}} (q_i \tk_i) B^{(r)}_{i,\ov{0}}, & \text{ for $r$ even},
\\
\\
\qbinom{r+2}{2}_{i} B^{(r+2)}_{i,\ov{0}} + \frac{[r+1]_{i}^2}{[2]_{i}} (q_i \tk_i) B^{(r)}_{i,\ov{0}}, & \text{ for $r$ odd};
\end{cases}
\end{align*}
\begin{align*}
B^{(2)}_{i,\ov{1}} B^{(r)}_{i,\ov{1}} =
\begin{cases}
\qbinom{r+2}{2}_{i} B^{(r+2)}_{i,\ov{1}} + \frac{[r+1]_{i}^2-1}{[2]_{i}} (q_i \tk_i) B^{(r)}_{i,\ov{1}}, & \text{ for $r$ even},
\\
\\
\qbinom{r+2}{2}_{i} B^{(r+2)}_{i,\ov{1}} + \frac{[r]_{i}^2-1}{[2]_{i}} (q_i \tk_i) B^{(r)}_{i,\ov{1}}, & \text{ for $r$ odd}.
\end{cases}
\end{align*}
}
\end{lem}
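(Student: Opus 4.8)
The plan is to reduce both identities to two applications of the basic recursion \eqref{lem:dividied power}, after rewriting the leading factor $B^{(2)}_{i,\ov{p}}$ in terms of $B_i^2$. From the definitions \eqref{eq:iDPev} and \eqref{eq:iDPodd} (taking $m=2$, i.e. $k=1$) one reads off $B^{(2)}_{i,\ov{0}} = B_i^2/[2]_{i}$ and $B^{(2)}_{i,\ov{1}} = (B_i^2 - q_i\tk_i)/[2]_{i}$, equivalently $B_i^2 = [2]_{i} B^{(2)}_{i,\ov{0}}$ and $B_i^2 = [2]_{i} B^{(2)}_{i,\ov{1}} + q_i\tk_i$. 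Thus in either case it suffices to compute $B_i^2 B^{(r)}_{i,\ov{p}} = B_i\big(B_i B^{(r)}_{i,\ov{p}}\big)$ and then, in the $\ov{p}=\ov{1}$ case, subtract the central correction $q_i\tk_i B^{(r)}_{i,\ov{1}}$ and divide through by $[2]_{i}$.

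First I would compute $B_i B^{(r)}_{i,\ov{p}}$ by \eqref{lem:dividied power}, whose two branches are selected by whether $\ov{p}=\ov{r}$. Applying $B_i$ a second time, I then re-apply \eqref{lem:dividied power} to each resulting term, keeping in mind that the factors $B^{(r\pm 1)}_{i,\ov{p}}$ now carry the opposite parity $\ov{r\pm 1}$, so the branch that fires flips accordingly; since $\tk_i$ is central it commutes freely with $B_i$ throughout. Running this through the four cases (two parities of $\ov{p}$ times two parities of $r$) yields, in every case, a leading term $[r+1]_{i}[r+2]_{i} B^{(r+2)}_{i,\ov{p}}$ together with a single lower term proportional to $q_i\tk_i B^{(r)}_{i,\ov{p}}$, whose coefficient is $[r]_{i}^2$ or $[r+1]_{i}^2$ according to the parity of $r$. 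Dividing the leading coefficient by $[2]_{i}$ produces $\qbinom{r+2}{2}_{i}$, matching the stated binomial factor.

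Finally I would assemble the four outputs. For $\ov{p}=\ov{0}$ the displayed formulas follow at once. For $\ov{p}=\ov{1}$ the extra subtraction of $q_i\tk_i B^{(r)}_{i,\ov{1}}/[2]_{i}$ replaces the coefficients $[r]_{i}^2$ and $[r+1]_{i}^2$ by $[r]_{i}^2-1$ and $[r+1]_{i}^2-1$, which is precisely the discrepancy between the two halves of the lemma. The computation is entirely routine; the only point demanding care — and the natural place for a parity slip — is the bookkeeping of the condition $\ov{p}=\ov{r}$ at each of the two steps, since it governs whether the lower term in \eqref{lem:dividied power} is present. No deeper input (in particular none of the $q$-binomial machinery of \eqref{eq:Twxyu} or Lemma~\ref{lem:Gwuxp0p1p2}) is needed at this stage.
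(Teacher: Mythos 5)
Your proposal is correct and is essentially the paper's own argument: the paper simply says the lemma "follows by a direct computation from the recursive definition of $\imath$divided powers," and your reduction of $B^{(2)}_{i,\ov{p}}$ to $B_i^2$ (plus the central correction $q_i\tk_i$ when $\ov{p}=\ov{1}$) followed by two applications of the recursion \eqref{lem:dividied power} is exactly that computation, with the four parity cases checking out as you describe.
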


\begin{proof}
Follows by a direct computation from the recursive definition of $\imath$divided powers \eqref{lem:dividied power}.
\end{proof}

\subsection{An induction step}

The following proposition will serve as a key induction step repeatedly in this paper.

\begin{prop}
  \label{prop:SLinduct}
Let $m\in \Z_{\ge 1}$ such that $m \not \equiv na_{ij} \pmod 2$ and let $i \in \Iw$. Suppose the following identity holds in $\tU$ for some $X\in \tU$:
\begin{align*}
\Xi : =\sum_{r+s=m} (-1)^r B^{(r)}_{i,\ov{p}}  X B_{i,\ov{p}+\ov{na_{ij}}}^{(s)} =0.
\end{align*}
Then the following identity holds in $\tU$:
\begin{align*}
\sum_{r+s=m+2} (-1)^r B^{(r)}_{i,\ov{p}}  X B_{i,\ov{p}+\ov{na_{ij}}}^{(s)} =0.
\end{align*}
\end{prop}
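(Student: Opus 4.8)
The plan is to deduce the degree-$(m+2)$ identity from the degree-$m$ one by multiplying $\Xi$ on the left by $B^{(2)}_{i,\ov p}$, on the right by $B^{(2)}_{i,\ov p+\ov{na_{ij}}}$, and on both sides by $B_i$, and then forming a single linear combination that isolates the desired relation. Throughout, only the outer factors $B^{(\bullet)}_{i,\bullet}$ are manipulated while $X$ is treated as an opaque element; this is legitimate because these factors are polynomials in the commuting elements $B_i$ and the central $\tk_i$, so in particular $B^{(s)}_{i,\ov p+\ov{na_{ij}}}$ commutes with $B_i$ and with $B^{(2)}_{i,\ov p+\ov{na_{ij}}}$, and both \eqref{lem:dividied power} and Lemma~\ref{lem:B2r} may be applied on either side. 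Writing $b=\ov p+\ov{na_{ij}}$ and $\Xi_M=\sum_{r+s=M}(-1)^rB^{(r)}_{i,\ov p}XB^{(s)}_{i,b}$, the hypothesis is $\Xi_m=0$ and the goal is $\Xi_{m+2}=0$.

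First I would record the key consequence of the parity hypothesis $m\not\equiv na_{ij}\pmod2$. In a summand indexed by $r$ (with $s=m-r$), the recursion \eqref{lem:dividied power} produces a lower-order correction on the left exactly when $\ov r=\ov p$, and on the right exactly when $\ov s=\ov b$; since $\ov s=\ov m+\ov r$ and $\ov b=\ov p+\ov{na_{ij}}$ with $\ov{na_{ij}}\neq\ov m$, the right correction occurs exactly when $\ov r\neq\ov p$. Thus \emph{exactly one} of the two corrections occurs for each $r$. Consequently, when I expand $B_i\Xi_m B_i$ using \eqref{lem:dividied power} on both sides, the would-be degree-$(m-2)$ contributions require both corrections to be present simultaneously, which never happens, and so they drop out. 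Expanding the three elements $B^{(2)}_{i,\ov p}\Xi_m$, $\Xi_m B^{(2)}_{i,b}$, and $B_i\Xi_m B_i$ (each equal to $0$ since $\Xi_m=0$) via Lemma~\ref{lem:B2r} and \eqref{lem:dividied power}, I obtain three identities whose degree-$(m+2)$ parts carry, after reindexing, the coefficients $\qbinom{r}{2}_i$, $\qbinom{s}{2}_i$, and $-[r]_i[s]_i$ on the monomial $B^{(r)}_{i,\ov p}XB^{(s)}_{i,b}$ with $r+s=m+2$, and whose degree-$m$ parts are explicit sums $\sum_{r+s=m}(-1)^r(\cdots)\,B^{(r)}_{i,\ov p}XB^{(s)}_{i,b}$.

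The core of the argument is then to take the combination
\[
B^{(2)}_{i,\ov p}\Xi_m+\Xi_m B^{(2)}_{i,b}+C\,B_i\Xi_m B_i=0,
\qquad
C=-\frac{q_i^{m+1}+q_i^{-(m+1)}}{[2]_i},
\]
and to verify two $q$-binomial facts. Substituting $u=q_i^{2r}$, one checks that with this value of $C$ the degree-$(m+2)$ coefficient $\qbinom{r}{2}_i+\qbinom{s}{2}_i-C[r]_i[s]_i$ is independent of $r$ and equals the scalar $\qbinom{m+2}{2}_i$, so the degree-$(m+2)$ part of the combination is exactly $\qbinom{m+2}{2}_i\,\Xi_{m+2}$. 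Likewise one checks that the resulting degree-$m$ coefficient (the sum of the two correction weights from Lemma~\ref{lem:B2r} minus $C$ times the cross-term weights from $B_i\Xi_m B_i$) is also independent of $r$; then that part is a scalar multiple of $\Xi_m=0$ and vanishes. The identity thus reduces to $\qbinom{m+2}{2}_i\,\Xi_{m+2}=0$, and since $\qbinom{m+2}{2}_i\neq0$ (as $q_i$ is generic and $m\ge1$) we conclude $\Xi_{m+2}=0$.

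The main obstacle is the second verification: that the degree-$m$ coefficient is constant in $r$. Because the formulas in Lemma~\ref{lem:B2r} and the cross-term weights depend on the parities of $r$ and of $\ov p,\,\ov b$, this requires a short case analysis, reducing in each case to the cancellation of the $q_i^{\pm2r}$ contributions, exactly the mechanism that pins down $C$ in the degree-$(m+2)$ computation. The parity dichotomy isolated above is what makes both the vanishing of the degree-$(m-2)$ terms and this final cancellation succeed, and it is precisely where the hypothesis $m\not\equiv na_{ij}\pmod2$ enters.
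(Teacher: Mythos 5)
Your proposal is correct and follows essentially the same route as the paper's proof: the paper forms exactly the combination $S= B^{(2)}_{i,\ov{p}} \Xi + \Xi B^{(2)}_{i,\ov{p}+\ov{na_{ij}}} - \frac{q_i^{m+1}+q_i^{-m-1}}{[2]_{i}} B_{i} \Xi B_{i}$, verifies via Lemma~\ref{lem:B2r} and \eqref{lem:dividied power} that the degree-$(m+2)$ coefficient is the constant $\qbinom{m+2}{2}_{i}$ and that the degree-$m$ part is a scalar multiple $\xi\Xi$ (with $\xi$ depending on the parity case), and concludes as you do. The only difference is presentational: the paper carries out the parity case analysis explicitly in four cases rather than isolating the "exactly one correction occurs" dichotomy up front.
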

(The assumption here that $m \not \equiv na_{ij} \pmod 2$ is reasonable, as in application below we have  $m=1- na_{ij}$ as the starting point.)

\begin{proof}
Let us first outline the idea of the proof.
Using Lemma~\ref{lem:B2r} and the recursive definition of $\imath$divided powers we shall compute
$B^{(2)}_{i,\ov{p}} \Xi$, $\Xi B^{(2)}_{i,\ov{p}+\ov{na_{ij}}}$, and $B_{i} \Xi B_{i}$, and then a sum
\begin{equation}
  \label{eq:SZ}
S:= B^{(2)}_{i,\ov{p}} \Xi + \Xi B^{(2)}_{i,\ov{p}+\ov{na_{ij}}} - \frac{q_i^{m+1} +q_i^{-m-1}}{[2]_{i}} B_{i} \Xi B_{i}.
\end{equation}
We make the following claim.

{\bf Claim.} The following identity holds for some suitable scalars $\xi$:
\begin{align}
 \label{eq:SZZ}
S & =  \qbinom{m+2}{2}_{i} \sum_{r+s=m+2} (-1)^r B^{(r)}_{i,\ov{p}}  X B_{i,\ov{p}+\ov{na_{ij}}}^{(s)}
+ \xi \Xi.
\end{align}
Assuming the Claim, we conclude that $\sum_{r+s=m+2} (-1)^r B^{(r)}_{i,\ov{p}}  X B_{i,\ov{p}+\ov{na_{ij}}}^{(s)} =0$ from this identity and the assumption that $\Xi=0$,  establishing the proposition.

It remains to prove the identity \eqref{eq:SZZ} in the Claim. The proof is divided into Cases (1)-(4) below according to the parities $\ov{p}$ and $\ov{p}+\ov{na_{ij}}$;  the scalars $\xi$ depend on these parities.

(1) Assume $na_{ij}$ is even (and hence $m$ is odd by assumption) and $\ov{p}=0$.

We have
\begin{align*}
B^{(2)}_{i,\ov{0}} \Xi
&= \sum_{r+s=m}(-1)^r \qbinom{r+2}{2}_{i} B^{(r+2)}_{i,\ov{0}}  X B_{i,\ov{0}}^{(s)}
+ \sum_{\stackrel{r \text{ even}}{r+s=m} }(-1)^r \frac{[r]_{i}^2}{[2]_{i}} (q_i \tk_i) B^{(r)}_{i,\ov{0}}  X B_{i,\ov{0}}^{(s)}
\\
& \quad + \sum_{\stackrel{r \text{ odd}}{r+s=m} }(-1)^r \frac{[r+1]_{i}^2}{[2]_{i}} (q_i \tk_i) B^{(r)}_{i,\ov{0}}  X B_{i,\ov{0}}^{(s)}
\\%
&= \sum_{r+s=m+2}(-1)^r \qbinom{r}{2}_{i} B^{(r)}_{i,\ov{0}}  X B_{i,\ov{0}}^{(s)}
+ \sum_{\stackrel{r \text{ even}}{r+s=m} } (-1)^r \frac{[r]_{i}^2}{[2]_{i}} (q_i \tk_i) B^{(r)}_{i,\ov{0}}  X B_{i,\ov{0}}^{(s)}
\\
& \quad + \sum_{\stackrel{r \text{ odd}}{r+s=m} } (-1)^r \frac{[r+1]_{i}^2}{[2]_{i}} (q_i \tk_i) B^{(r)}_{i,\ov{0}}  X B_{i,\ov{0}}^{(s)},
\end{align*}
where  the last equation is obtained by a change of variables $r\mapsto r-2$ on the first summand.

On the other hand, we have
\begin{align*}
\Xi B^{(2)}_{i,\ov{0}}
&= \sum_{r+s=m}(-1)^r \qbinom{s+2}{2}_{i} B^{(r)}_{i,\ov{0}}  X B_{i,\ov{0}}^{(s+2)}
+ \sum_{\stackrel{s \text{ even}}{r+s=m} } (-1)^r \frac{[s]_{i}^2}{[2]_{i}} (q_i \tk_i) B^{(r)}_{i,\ov{0}}  X B_{i,\ov{0}}^{(s)}
\\
& \quad + \sum_{\stackrel{s \text{ odd}}{r+s=m} } (-1)^r \frac{[s+1]_{i}^2}{[2]_{i}} (q_i \tk_i) B^{(r)}_{i,\ov{0}}  X B_{i,\ov{0}}^{(s)}
\\%
&= \sum_{r+s=m+2} (-1)^r \qbinom{s}{2}_{i} B^{(r)}_{i,\ov{0}}  X B_{i,\ov{0}}^{(s)}
+ \sum_{\stackrel{r \text{ odd}}{r+s=m} } (-1)^r \frac{[s]_{i}^2}{[2]_{i}} (q_i \tk_i) B^{(r)}_{i,\ov{0}}  X B_{i,\ov{0}}^{(s)}
\\
& \quad + \sum_{\stackrel{r \text{ even}}{r+s=m} } (-1)^r \frac{[s+1]_{i}^2}{[2]_{i}} (q_i \tk_i) B^{(r)}_{i,\ov{0}}  X B_{i,\ov{0}}^{(s)},
\end{align*}
where  the last equation is obtained by a change of variables $s\mapsto s-2$ on the first summand; also note in the last 2 summands that $r$ is even if and only if $s$ is odd since $r+s=m$ is odd.

Similarly we have
\begin{align*}
B_{i} \Xi B_{i}
&= \sum_{r+s=m}(-1)^r [r+1]_{i} [s+1]_{i} B^{(r+1)}_{i,\ov{0}}  X B_{i,\ov{0}}^{(s+1)}
\\
& \quad + \sum_{\stackrel{r \text{ even}}{r+s=m} } (-1)^r [r]_{i} [s+1]_{i} (q_i \tk_i) B^{(r-1)}_{i,\ov{0}}  X B_{i,\ov{0}}^{(s+1)}
\\
& \quad + \sum_{\stackrel{r \text{ odd}}{r+s=m} } (-1)^r [r+1]_{i} [s]_{i} (q_i \tk_i) B^{(r+1)}_{i,\ov{0}}  X B_{i,\ov{0}}^{(s-1)}
\\%
&= \sum_{r+s=m+2} (-1)^{r-1} [r]_{i} [s]_{i} B^{(r)}_{i,\ov{0}}  X B_{i,\ov{0}}^{(s)}
 + \sum_{\stackrel{r \text{ odd}}{r+s=m} } (-1)^{r-1} [r+1]_{i} [s]_{i} (q_i \tk_i) B^{(r)}_{i,\ov{0}}  X B_{i,\ov{0}}^{(s)}
\\
& \quad + \sum_{\stackrel{r \text{ even}}{r+s=m} } (-1)^{r-1} [r]_{i} [s+1]_{i} (q_i \tk_i) B^{(r)}_{i,\ov{0}}  X B_{i,\ov{0}}^{(s)},
\end{align*}
where the last equation is obtained by changes of variables $(r\mapsto r-1, s\mapsto s-1)$,  $(r\mapsto r+1, s\mapsto s-1)$, (and respectively, $(r\mapsto r-1, s\mapsto s+1)$) on the first, second, (and respectively, third) summands.

Collecting the 3 identities for $B^{(2)}_{i,\ov{0}} \Xi, \Xi B^{(2)}_{i,\ov{0}}, B_{i} \Xi B_{i}$ above, we now compute the sum $S$ defined in \eqref{eq:SZ}. The coefficient of $B^{(r)}_{i,\ov{0}}  X B_{i,\ov{0}}^{(s)}$ in $S$, where $r+s=m+2$, is equal to
\begin{align*}
(-1)^r \qbinom{r}{2}_{i}
+ (-1)^r \qbinom{s}{2}_{i}
- \frac{q_i^{m+1} +q_i^{-m-1}}{[2]_{i}} \cdot (-1)^{r-1} [r]_{i} [s]_{i} =(-1)^r \qbinom{m+2}{2}_{i}.
\end{align*}
The coefficient of $(q_i \tk_i) B^{(r)}_{i,\ov{0}}  X B_{i,\ov{0}}^{(s)}$ in $S$, for $r+s=m$ and $r$ even, is equal to
\begin{align*}
(-1)^r \frac{[r]_{i}^2}{[2]_{i}}
+ (-1)^r \frac{[s+1]_{i}^2}{[2]_{i}}
- \frac{q_i^{m+1} +q_i^{-m-1}}{[2]_{i}} \cdot (-1)^{r-1} [r]_{i} [s+1]_{i}
=(-1)^r \frac{[m+1]^2_{i}}{[2]_{i}}.
\end{align*}
Similarly, the coefficient of $(q_i \tk_i) B^{(r)}_{i,\ov{0}}  X B_{i,\ov{0}}^{(s)}$ in $S$, for $r+s=m$ and $r$ odd, is equal to
\begin{align*}
(-1)^r \frac{[r+1]_{i}^2}{[2]_{i}}
+ (-1)^r \frac{[s]_{i}^2}{[2]_{i}}
- \frac{q_i^{m+1} +q_i^{-m-1}}{[2]_{i}} \cdot (-1)^{r-1} [r+1]_{i} [s]_{i}
=(-1)^r \frac{[m+1]^2_{i}}{[2]_{i}}.
\end{align*}
Summarizing, we have obtained
\begin{align*}
 B^{(2)}_{i,\ov{0}} \Xi & + \Xi B^{(2)}_{i,\ov{0}+\ov{na_{ij}}} - \frac{q_i^{m+1} +q_i^{-m-1}}{[2]_{i}} B_{i} \Xi B_{i}
\\
& =  \qbinom{m+2}{2}_{i} \sum_{r+s=m+2} (-1)^r B^{(r)}_{i,\ov{0}}  X B_{i,\ov{0}}^{(s)}
+ \frac{[m+1]^2_{i}}{[2]_{i}} \Xi.
\end{align*}
That is, we have established the identity \eqref{eq:SZZ} with $\xi =\frac{[m+1]^2_{i}}{[2]_{i}}$.

The proofs for the identity \eqref{eq:SZZ} in the remaining Cases (2)--(4) below are similar, and we will only write down the main formulas.

\vspace{2mm}

(2) Assume $na_{ij}$ is even (and hence $m$ is odd) and $\ov{p}=1$.
We have
\begin{align*}
B^{(2)}_{i,\ov{1}} \Xi
&= \sum_{r+s=m+2}(-1)^r \qbinom{r}{2}_{i} B^{(r)}_{i,\ov{1}}  X B_{i,\ov{1}}^{(s)}
+ \sum_{\stackrel{r \text{ even}}{r+s=m} } (-1)^r \frac{[r+1]_{i}^2-1}{[2]_{i}} (q_i \tk_i) B^{(r)}_{i,\ov{1}}  X B_{i,\ov{1}}^{(s)}
\\
& \quad + \sum_{\stackrel{r \text{ odd}}{r+s=m} } (-1)^r \frac{[r]_{i}^2-1}{[2]_{i}} (q_i \tk_i) B^{(r)}_{i,\ov{1}}  X B_{i,\ov{1}}^{(s)};
\end{align*}
\begin{align*}
\Xi B^{(2)}_{i,\ov{1}}
&= \sum_{r+s=m+2} (-1)^r \qbinom{s}{2}_{i} B^{(r)}_{i,\ov{1}}  X B_{i,\ov{1}}^{(s)}
+ \sum_{\stackrel{r \text{ odd}}{r+s=m} } (-1)^r \frac{[s+1]_{i}^2-1}{[2]_{i}} (q_i \tk_i) B^{(r)}_{i,\ov{1}}  X B_{i,\ov{1}}^{(s)}
\\
& \quad + \sum_{\stackrel{r \text{ even}}{r+s=m} } (-1)^r \frac{[s]_{i}^2-1}{[2]_{i}} (q_i \tk_i) B^{(r)}_{i,\ov{1}}  X B_{i,\ov{1}}^{(s)};
\end{align*}
\begin{align*}
B_{i} \Xi B_{i}
&= \sum_{r+s=m+2} (-1)^{r-1} [r]_{i} [s]_{i} B^{(r)}_{i,\ov{1}}  X B_{i,\ov{1}}^{(s)}
 + \sum_{\stackrel{r \text{ even}}{r+s=m} } (-1)^{r-1} [r+1]_{i} [s]_{i} (q_i \tk_i) B^{(r)}_{i,\ov{1}}  X B_{i,\ov{1}}^{(s)}
\\
& \quad + \sum_{\stackrel{r \text{ odd}}{r+s=m} } (-1)^{r-1} [r]_{i} [s+1]_{i} (q_i \tk_i) B^{(r)}_{i,\ov{1}}  X B_{i,\ov{1}}^{(s)}.
\end{align*}
From these formulas, we see that the coefficient of $B^{(r)}_{i,\ov{1}}  X B_{i,\ov{1}}^{(s)}$ (for $r+s=m+2$) in the sum $S$ defined in \eqref{eq:SZ}  is again equal to $(-1)^r \qbinom{m+2}{2}_{i}$;
the coefficient of $(q_i \tk_i) B^{(r)}_{i,\ov{1}}  X B_{i,\ov{1}}^{(s)}$ in $S$, for $r+s=m$ and $r$ even, is
\begin{align*}
&= (-1)^r \frac{[r+1]_{i}^2-1}{[2]_{i}}
+ (-1)^r \frac{[s]_{i}^2-1}{[2]_{i}}
- \frac{q_i^{m+1} +q_i^{-m-1}}{[2]_{i}}   (-1)^{r-1} [r+1]_{i} [s]_{i}
\\
&=(-1)^r \frac{[m+1]^2_{i}-2}{[2]_{i}};
\end{align*}
and the coefficient of $(q_i \tk_i) B^{(r)}_{i,\ov{1}}  X B_{i,\ov{1}}^{(s)}$ in $S$, for $r+s=m$ and $r$ odd, is
\begin{align*}
&= (-1)^r \frac{[r]_{i}^2-1}{[2]_{i}}
+ (-1)^r \frac{[s+1]_{i}^2-1}{[2]_{i}}
- \frac{q_i^{m+1} +q_i^{-m-1}}{[2]_{i}}   (-1)^{r-1} [r]_{i} [s+1]_{i}
\\
& =(-1)^r \frac{[m+1]^2_{i} -2}{[2]_{i}}.
\end{align*}
In this way, we have established the identity \eqref{eq:SZZ} with $\xi =\frac{[m+1]^2_{i}-2}{[2]_{i}}$.

\vspace{2mm}

(3) Assume $na_{ij}$ is odd (and hence $m$ is even) and $\ov{p}=1$.
We have
\begin{align*}
B^{(2)}_{i,\ov{1}} \Xi
&= \sum_{r+s=m+2}(-1)^r \qbinom{r}{2}_{i} B^{(r)}_{i,\ov{1}}  X B_{i,\ov{0}}^{(s)}
+ \sum_{\stackrel{r \text{ even}}{r+s=m} } (-1)^r \frac{[r+1]_{i}^2-1}{[2]_{i}} (q_i \tk_i) B^{(r)}_{i,\ov{1}}  X B_{i,\ov{0}}^{(s)}
\\
& \quad + \sum_{\stackrel{r \text{ odd}}{r+s=m} } (-1)^r \frac{[r]_{i}^2-1}{[2]_{i}} (q_i \tk_i) B^{(r)}_{i,\ov{1}}  X B_{i,\ov{0}}^{(s)};
\end{align*}
\begin{align*}
\Xi B^{(2)}_{i,\ov{0}}
&= \sum_{r+s=m+2} (-1)^r \qbinom{s}{2}_{i} B^{(r)}_{i,\ov{1}}  X B_{i,\ov{0}}^{(s)}
+ \sum_{\stackrel{r \text{ even}}{r+s=m} } (-1)^r \frac{[s]_{i}^2}{[2]_{i}} (q_i \tk_i) B^{(r)}_{i,\ov{1}}  X B_{i,\ov{0}}^{(s)}
\\
& \quad + \sum_{\stackrel{r \text{ odd}}{r+s=m} } (-1)^r \frac{[s+1]_{i}^2}{[2]_{i}} (q_i \tk_i) B^{(r)}_{i,\ov{1}}  X B_{i,\ov{0}}^{(s)};
\end{align*}
\begin{align*}
B_{i} \Xi B_{i}
&= \sum_{r+s=m+2} (-1)^{r-1} [r]_{i} [s]_{i} B^{(r)}_{i,\ov{1}}  X B_{i,\ov{0}}^{(s)}
 + \sum_{\stackrel{r \text{ even}}{r+s=m} } (-1)^{r-1} [r+1]_{i} [s]_{i} (q_i \tk_i) B^{(r)}_{i,\ov{1}}  X B_{i,\ov{0}}^{(s)}
\\
& \quad + \sum_{\stackrel{r \text{ odd}}{r+s=m} } (-1)^{r-1}  [r]_{i} [s+1]_{i}
(q_i \tk_i) B^{(r)}_{i,\ov{1}}  X B_{i,\ov{0}}^{(s)}.
\end{align*}
From these formulas, we see that the coefficient of $B^{(r)}_{i,\ov{1}}  X B_{i,\ov{0}}^{(s)}$ (for $r+s=m+2$) in the sum $S$ defined in \eqref{eq:SZ}  is again equal to $(-1)^r \qbinom{m+2}{2}_{i}$;
the coefficient of $(q_i \tk_i) B^{(r)}_{i,\ov{1}}  X B_{i,\ov{1}}^{(s)}$ in $S$, for $r+s=m$ and $r$ even, is equal to
\begin{align*}
(-1)^r \frac{[r+1]_{i}^2-1}{[2]_{i}}
+ (-1)^r \frac{[s]_{i}^2}{[2]_{i}}
- \frac{q_i^{m+1} +q_i^{-m-1}}{[2]_{i}}   (-1)^{r-1} [r+1]_{i} [s]_{i}
=(-1)^r \frac{[m+1]^2_{i}-1}{[2]_{i}};
\end{align*}
and the coefficient of $(q_i \tk_i) B^{(r)}_{i,\ov{1}}  X B_{i,\ov{1}}^{(s)}$ in $S$, for $r+s=m$ and $r$ odd, is equal to
\begin{align*}
(-1)^r \frac{[r]_{i}^2-1}{[2]_{i}}
+ (-1)^r \frac{[s+1]_{i}^2}{[2]_{i}}
- \frac{q_i^{m+1} +q_i^{-m-1}}{[2]_{i}}   (-1)^{r-1} [r]_{i} [s+1]_{i}
=(-1)^r \frac{[m+1]^2_{i} -1}{[2]_{i}}.
\end{align*}
Thus we have established the identity \eqref{eq:SZZ} with $\xi =\frac{[m+1]^2_{i}-1}{[2]_{i}}$.

\vspace{2mm}

(4) Assume $na_{ij}$ is odd (and hence $m$ is even) and $\ov{p}=0$. This case (with $\xi =\frac{[m+1]^2_{i}-1}{[2]_{i}}$) is completely parallel to Case (3), and it can also be obtained from (3) by applying a suitable anti-involution.

This completes the proof of the identity \eqref{eq:SZZ} and hence Proposition~\ref{prop:SLinduct}.
\end{proof}

\subsection{Non-standard Serre-Lusztig in $\U$}

We obtain some curious non-standard Serre-Lusztig relations for $\U$, which is a counterpart of Theorem~{\bf B} for $\tUi$.

\begin{cor}
The following identities hold in $\U^+$, for any $i \neq j\in \I$ and $n,t \in \Z_{\ge 0}$:
\begin{align}
  \label{eq:nstdSL}
\sum_{r+s=1-na_{ij}+2t} (-1)^r E^{(r)}_{i} E_j^{(n)} E_{i}^{(s)} &=0.
\end{align}
\end{cor}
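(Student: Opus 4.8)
The plan is to argue by induction on $t$, using as the induction step a copy of (the proof of) Proposition~\ref{prop:SLinduct} transported from $\tUi$ to $\U^+$. For the base case $t=0$, the asserted identity is $\sum_{r+s=1-na_{ij}}(-1)^r E_i^{(r)} E_j^{(n)} E_i^{(s)}=0$, which is exactly the Serre--Lusztig relation of minimal degree \eqref{eq:minSLQG}: at $m=1-na_{ij}$ the exponent $er(1-na_{ij}-m)$ in $f^+_{i,j;n,m,e}$ vanishes, so the $q$-power--free sum coincides with Lusztig's relation $f^+_{i,j;n,1-na_{ij},e}=0$. Since $a_{ij}\le 0$ for $i\neq j$, we have $1-na_{ij}\ge 1$, so all the degrees $1-na_{ij}+2t$ occurring below are $\ge 1$.

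For the induction step, set $m=1-na_{ij}+2t$ and assume $\Xi:=\sum_{r+s=m}(-1)^r E_i^{(r)} E_j^{(n)} E_i^{(s)}=0$. Observe that $m-na_{ij}=1-2na_{ij}+2t$ is odd, so the parity hypothesis $m\not\equiv na_{ij}\pmod 2$ of Proposition~\ref{prop:SLinduct} is met at every stage. I would then replay the computation in the proof of Proposition~\ref{prop:SLinduct} verbatim, with $E_i$ in place of $B_i$ and $E_j^{(n)}$ in place of $X$, forming
\[
S:=E_i^{(2)}\Xi+\Xi E_i^{(2)}-\frac{q_i^{m+1}+q_i^{-m-1}}{[2]_{i}}\,E_i\Xi E_i.
\]
The key point is that $\U^+$ has no Cartan part: the recursions $E_iE_i^{(r)}=[r+1]_{i}E_i^{(r+1)}$ and $E_i^{(2)}E_i^{(r)}=\qbinom{r+2}{2}_{i}E_i^{(r+2)}$ are precisely Lemma~\ref{lem:B2r} and \eqref{lem:dividied power} with the correction term $q_i\tk_i$ set to $0$, so every lower-order term in the three expansions of $E_i^{(2)}\Xi$, $\Xi E_i^{(2)}$, and $E_i\Xi E_i$ disappears.

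Hence only the top-degree contribution survives, and the same $q$-binomial identity $\qbinom{r}{2}_{i}+\qbinom{s}{2}_{i}+\frac{q_i^{m+1}+q_i^{-m-1}}{[2]_{i}}[r]_{i}[s]_{i}=\qbinom{m+2}{2}_{i}$ (for $r+s=m+2$) used in that proof yields $S=\qbinom{m+2}{2}_{i}\sum_{r+s=m+2}(-1)^r E_i^{(r)} E_j^{(n)} E_i^{(s)}$. Since $\Xi=0$ forces $S=0$ and $\qbinom{m+2}{2}_{i}\neq 0$, the degree-$(m+2)$ relation follows, which advances $t$ by $1$ and closes the induction. I do not anticipate any genuine obstacle: all the substance is already contained in the base case \eqref{eq:minSLQG} and in the routine observation that Proposition~\ref{prop:SLinduct} degenerates, upon dropping the $q_i\tk_i$ terms, into a simpler assertion about ordinary divided powers in $\U^+$.
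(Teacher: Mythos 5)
Your proposal is correct and is essentially the paper's own (first) proof: the corollary is deduced from the minimal-degree relation \eqref{eq:minSLQG} together with a $\U^+$-version of Proposition~\ref{prop:SLinduct}, which, as you note, simplifies because the $q_i\tk_i$ correction terms drop out for ordinary divided powers. (The paper also records a one-line alternative: specialize the split $\tUi$ at $\tk_i=\tk_j=0$, which is isomorphic to $\U^-$, and read off the $F$-counterpart from Theorem~{\bf B}.)
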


\begin{proof}
A $\U$-version of Proposition~\ref{prop:SLinduct} holds (by a similar and even simpler proof), where the $\imath$divided powers are replaced by the divided powers of $E_i$ in $\U^+$. Then the corollary follows from this variant of Proposition~\ref{prop:SLinduct} and the Serre-Lusztig relation \eqref{eq:minSLQG}.

Here is a second proof. A split $\imath$quantum group $\tUi$ with $\tk_i=\tk_j=0$ is isomorphic to $\U^-$. Theorem~{\bf B} then reduces to an $F$-counterpart of \eqref{eq:nstdSL}, which is equivalent to \eqref{eq:nstdSL}.
\end{proof}

\section{Serre-Lusztig relations of minimal degree, I}
   \label{sec:minimalSL}

Throughout this section, we assume $\tau j = \wb j =j \in \Iw$. We shall prove the following theorem (which is Theorem~{\bf A} for $\tau j=j= \wb j$).

\begin{thm}
\label{thm:minLS}
For any $i \neq j\in \Iw$ such that $\tau i=i= \wb i$ and $\tau j=j= \wb j$, the following identities hold in $\tUi$:
\begin{align}
  \label{eq:SerreBn1}
\sum_{r+s=1-na_{ij}} (-1)^r B^{(r)}_{i,\ov{p}} B_j^n B_{i,\ov{p}+\ov{na_{ij}}}^{(s)}=0, \quad (n\geq 0).
\end{align}
\end{thm}

\subsection{First reductions}

\begin{lem}
  \label{lem:span}
Suppose  $\tau j = \wb j =j \in \Iw$. For each $n\ge 0$, $B_j^n$ lies in the $\K(q)$-span of
\begin{align*}
\{E_j^\mu(\tK'_j)^{\mu+2k}F_j^\nu (\tK_j\tK'_j)^{\beta-k}   \mid n=\mu+\nu+2\beta, 0\leq k\leq \beta\}.
\end{align*}
\end{lem}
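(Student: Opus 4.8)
The plan is to prove the claim by induction on $n$, using the explicit formula for $B_j = F_j + \T_{\wb}(E_j)\tK_j'$ together with the commutation relations in the Drinfeld double $\tU$. The key observation is that since $\tau j = j = \wb j$, we have $\T_{\wb}(E_j) = E_j$ up to the sign/normalization issues, so that (after absorbing constants) $B_j$ is essentially $F_j + E_j \tK_j'$, a linear combination of an $F$-term and an $E$-term twisted by a Cartan element. I would first verify the base cases $n=0$ (trivial, $B_j^0 = 1$ corresponds to $\mu=\nu=\beta=k=0$) and $n=1$ directly, where $B_j = F_j + E_j\tK_j'$ manifestly lies in the proposed span with $(\mu,\nu,\beta)=(0,1,0)$ and $(1,0,0)$.

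For the inductive step, I would assume $B_j^{n-1}$ lies in the stated $\Q(q)$-span and compute $B_j^n = B_j \cdot B_j^{n-1}$ by multiplying each spanning monomial $E_j^\mu(\tK_j')^{\mu+2k}F_j^\nu(\tK_j\tK_j')^{\beta-k}$ on the left by $F_j$ and by $E_j\tK_j'$ separately. The $F_j$-multiplication is the easy part: moving $F_j$ past $E_j^\mu$ produces lower-order $E$-$F$ commutator terms via \eqref{Q4}, and past the Cartan factor $(\tK_j')^{\mu+2k}$ it only picks up a $q$-power by \eqref{eq:K2}, ultimately yielding terms with $\nu$ increased by one (shape $\mu' = \mu$, $\nu' = \nu+1$, $\beta' = \beta$) plus correction terms of the required shape. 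The $E_j\tK_j'$-multiplication increases $\mu$ by one (again modulo $q$-powers from commuting $\tK_j'$ past $E_j^\mu$ and $F_j^\nu$), raising the $\tK_j'$-exponent appropriately and keeping the bookkeeping $n = \mu+\nu+2\beta$ and $0 \le k \le \beta$ consistent.

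The crux of the argument is checking that every term produced is of the prescribed form, in particular that the exponent of $\tK_j'$ is exactly $\mu+2k$ for the new $\mu$ and some $k$ in range, and that the central factor appears only as the combination $(\tK_j\tK_j')^{\beta-k}$. The main obstacle I anticipate is the bookkeeping in the $E$-$F$ commutator: when $F_j$ meets an $E_j^\mu$ factor (or $E_j$ meets an $F_j^\nu$ factor), the relation $[E_j, F_j] = (\tK_j - \tK_j')/(q_j - q_j^{-1})$ produces a difference of two Cartan elements $\tK_j$ and $\tK_j'$, and I must verify that both resulting terms reorganize into the claimed monomial shape — this is where the parameter $k$ and the central element $\tK_j\tK_j'$ get shifted, and where one must confirm $0 \le k \le \beta$ is preserved. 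I expect this to reduce to a finite set of $q$-binomial identities for the coefficients, which can be checked by a direct (if tedious) computation; conceptually it reflects the PBW-type structure of the rank-one subalgebra $\tU(\sll_2)$ generated by $E_j, F_j, \tK_j, \tK_j'$, into which $B_j^n$ must expand.
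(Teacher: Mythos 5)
Your proposal is correct and is essentially the paper's own argument: the paper proves this lemma by exactly the same simple induction on $n$ using $B_j = F_j + E_j\tK_j'$ (note $\T_{\wb}(E_j)=E_j$ exactly, with no sign issues, since $\wb j=j$) and the centrality of $\tK_j\tK_j'$, which is what absorbs the $\tK_j$ produced by the commutator $[E_j,F_j]$ into the factor $(\tK_j\tK_j')^{\beta-k}$ while shifting $\beta$ and $k$ as you describe. The only superfluous worry is the "finite set of $q$-binomial identities": since the statement is only about membership in a $\Q(q)$-span, the precise coefficients never need to be computed.
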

\begin{proof}
Follows by a simple induction on $n$ and using the definition $B_j =F_j +E_j \tK'_j$; note that $\tK_j\tK'_j$ is central in $\tU$.
\end{proof}

\begin{thm}
\label{thm:evenjfixed}
Suppose  $\tau j = \wb j =j \in \Iw$. Let $\mu, \nu, \beta \in\Z_{\geq0}$ and denote $n=\mu+\nu+2\beta$. The following (equivalent) identities hold:
\begin{align}
\sum_{r+s=1- na_{ij}} (-1)^r B^{(r)}_{i,\ov{p}} E_j^\mu K_j^{-(\mu+2\beta)}F_j^\nu B_{i,\ov{p}+\ov{na_{ij}}}^{(s)}& =0 \quad \in \U,
\label{eq:UiS1}
\\
\sum_{r+s=1- na_{ij}} (-1)^r B^{(r)}_{i,\ov{p}} E_j^\mu(\tK'_j)^{\mu+2\beta}F_j^\nu B_{i,\ov{p}+\ov{na_{ij}}}^{(s)}& =0 \quad \in \tU.
\label{eq:UiS2}
\end{align}
\end{thm}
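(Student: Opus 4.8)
The plan is to prove Theorem~\ref{thm:evenjfixed} by reducing both displayed identities \eqref{eq:UiS1} and \eqref{eq:UiS2} to a single family of $q$-binomial identities, which in turn will follow from the properties of the auxiliary function $G(w,u,\ell;p_0,p_1,p_2)$ recorded in Lemma~\ref{lem:Gwuxp0p1p2} and Lemma~\ref{lem:GH00}. First I would observe that \eqref{eq:UiS1} and \eqref{eq:UiS2} are equivalent: the central element $\tK_j\tK_j'$ acts as a scalar after applying the projection $\pi\colon\tU\to\U$ appearing in the proof of Proposition~\ref{prop:iSerre}, and the monomial $E_j^\mu(\tK_j')^{\mu+2\beta}F_j^\nu$ maps (up to an invertible scalar and the central factor) to $E_j^\mu K_j^{-(\mu+2\beta)}F_j^\nu$. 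Since $\pi$ intertwines the $\imath$divided powers $B_{i,\ov p}^{(r)}$ with their $\U$-counterparts, proving one identity over $\tU$ is equivalent to proving the other over $\U$, and it suffices to work with whichever side is computationally cleaner; I would adopt \eqref{eq:UiS2}.

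The heart of the argument is that everything takes place inside the rank-one subalgebra generated by $B_i$ (together with $\tk_i$, $\tK_i$, $\tK_i'$) acting on the inserted middle factor $X:=E_j^\mu(\tK_j')^{\mu+2\beta}F_j^\nu$. The key point is that $X$ is a weight vector for the adjoint action of the $i$-Cartan: commuting $X$ past $\onestar_m$ via \eqref{eqn: idempotent Ej Fj} simply shifts the idempotent index by a definite integer depending on $\langle h_i,\,\text{wt}(X)\rangle = -(\mu+\nu+2\beta)a_{ij}=-na_{ij}$, which is exactly the shift $\ov{na_{ij}}$ appearing in the parity of the right-hand $\imath$divided power. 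Next I would expand each $\imath$divided power $B_{i,\ov p}^{(r)}$ into the PBW basis of quantum $\sll_2$, using the expansion formulas from \cite{BeW18}; this converts the left-hand side of \eqref{eq:UiS2} into a sum of monomials $E_i^aF_i^b$ acting on $X$ with explicit $q$-power and $\tk_i$-power coefficients. Collecting the coefficient of each resulting PBW monomial (after moving all $E_i,F_i$ through $X$ using the quantum $\sll_2$ straightening relations \eqref{eqn:commutate-idempotent3}--\eqref{eqn:commutate-idempotent4}) reduces the vanishing statement to a finite list of scalar $q$-identities, one for each PBW monomial type.

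The main obstacle I expect is the bookkeeping in this reduction: matching the triple summation over $(b,c,e)$ and the $t$-sum that organize $G(w,u,\ell;p_0,p_1,p_2)$ against the sums produced by the PBW expansion, and in particular identifying the correct values of the six parameters $w,u,\ell,p_0,p_1,p_2$ (built from $\mu,\nu,\beta,p,$ and $na_{ij}$) so that the target $q$-identity becomes a specialization of \eqref{eq:G=0}, i.e. $G(\cdots)=0$ for $\ell>0$, with the boundary case $\ell=0$ handled by Lemma~\ref{lem:GH00}. This is precisely the step the introduction flags as ``much more involved than \cite{CLW18}'': the presence of three free variables $\mu,\nu,\beta$ (versus the single-variable middle term $B_j$ in the $\imath$Serre relation \eqref{iSe}) means the reduction lands not on a single $G$-identity but on a family indexed by these variables, and the recursions \eqref{eq:Gx+1w}--\eqref{eq:Godd} must be applied to normalize all of them to the base form. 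Once the target identities are exhibited as instances of $G=0$, the theorem follows; I would defer the longest of these $q$-binomial verifications to Appendix~\ref{App:A}, as the excerpt's roadmap indicates, and give the PBW-reduction scaffolding in the main text.
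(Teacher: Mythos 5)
Your overall scaffolding (equivalence of \eqref{eq:UiS1} and \eqref{eq:UiS2} via the projection $\pi$ from the proof of Proposition~\ref{prop:iSerre}, then PBW expansion of the $\imath$divided powers via \cite{BeW18} and reduction to $q$-binomial identities organized by the function $G$) matches the paper's route. But there is a genuine gap at the decisive step. You claim that ``collecting the coefficient of each resulting PBW monomial \dots reduces the vanishing statement to a finite list of scalar $q$-identities, one for each PBW monomial type.'' This is false: the monomials $E_1^{(d)}E_2^\mu E_1^{(l-d)} K_2^{-(\mu+2\beta)}F_1^{(y)}F_2^\nu F_1^{(\cdots)}\onestar_{2\la}$ are \emph{not} linearly independent --- the Serre-Lusztig relations \eqref{eq:LS} of the ambient quantum group impose linear dependencies among the strings $F_1^{(y)}F_2^\nu F_1^{(\cdots)}$ (and dually among the $E$-strings) --- and correspondingly the individual coefficients $T(w,u,l,\mu,\beta)$ do \emph{not} all vanish. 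The paper's proof is genuinely two-pronged: Proposition~\ref{prop:gT=0} shows $T=0$ only in the range $l\leq \alpha\mu \leq 2u+l-1-2\alpha\beta$, while in the complementary ranges ($\alpha\mu\geq l+2u-2\alpha\beta$, resp.\ $l>\alpha\mu$) Proposition~\ref{prop:gHserre} shows that the \emph{sum} over $y$ (resp.\ over $d$) vanishes by invoking the classical Serre-Lusztig relation \eqref{eq:LS} in $\U$, after using the recursion \eqref{eq:w+1} to see that $T(w_y,u,l,\mu,\beta)$ depends on $y$ only through the geometric factor $(-1)^yq_1^{y(\alpha\mu-l-2u)}$. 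Your proposal omits this input from $\U$ entirely, and without it the argument cannot close.

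A related misidentification: you expect the target identities to be ``specializations of \eqref{eq:G=0}, i.e.\ $G(\cdots)=0$ for $\ell>0$, with the boundary case $\ell=0$ handled by Lemma~\ref{lem:GH00}.'' In fact the reduction lands \emph{entirely} at $\ell=0$ (see \eqref{eq:gT=G}), where \eqref{eq:G=0} says nothing; the identity \eqref{eq:G=0} enters only indirectly, to derive the $w$-shift relation $G(w,u,0;\cdots)=G(w+1,u,0;\cdots)$ underlying \eqref{eq:w+1}. The actual vanishing of $T$ in the relevant parameter range is obtained by converting $G(w,u,0;\cdots)$ to $H(u;p_1,p_2)$ via Lemma~\ref{lem:GH00} and then observing that in every summand one of the two $q^2$-binomials has negative-looking upper entry smaller than its lower entry, since $(u-1-\tfrac{\alpha(\mu+2\beta)-l}{2})+\tfrac{\alpha\mu-l}{2}<c+e$. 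You would need to add both the support argument and the appeal to the quantum-group Serre-Lusztig relations to make your outline into a proof.
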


\begin{proof}
By the same type of arguments in Proposition~\ref{prop:iSerre}, the 2 identities \eqref{eq:UiS1}--\eqref{eq:UiS2} are equivalent. We shall prove \eqref{eq:UiS1}.

The proof of \eqref{eq:UiS1} is very long and computational; it will occupy \S\ref{subsec:reduction}--\ref{subsec:evev} and Appendix~\ref{App:A}. In \S\ref{subsec:reduction}, the proof of \eqref{eq:UiS1} is reduced to the verification of 4 identities \eqref{eq:serre11F}--\eqref{eq:serre11evenodd}. The proof of \eqref{eq:serre11F} is given in \S\ref{subsec:proof1}--\ref{subsec:evev}, while similar proofs of \eqref{eq:serre11odd}--\eqref{eq:serre11evenodd} are outlined in Appendix~\ref{App:A}.
\end{proof}

We can now complete the proof of Theorem~\ref{thm:minLS} using Theorem~\ref{thm:evenjfixed} and Proposition~\ref{prop:SLinduct}.

\begin{proof} [Proof of Theorem~\ref{thm:minLS}]
Let $\mu, \nu, \beta \in\Z_{\geq0}$ such that $n=\mu+\nu+2\beta$, and let $0 \le k \le \beta$.
By Theorem~\ref{thm:evenjfixed} (with $\beta$ replaced by $k$) and noting that $\ov{na_{ij}} = \ov{(\mu+\nu+2k)a_{ij}}$, we have
\begin{align}
 \label{eq:SLk}
\sum_{r+s=1- (\mu+\nu+2k)a_{ij}} (-1)^r B^{(r)}_{i,\ov{p}} E_j^\mu(\tK'_j)^{\mu+2k}F_j^\nu B_{i,\ov{p}+\ov{na_{ij}}}^{(s)}=0.
\end{align}
By Proposition~\ref{prop:SLinduct}, we have by induction on $t\in \Z_{\ge 0}$ that
\begin{align*}
\sum_{r+s=1- (\mu+\nu+2k)a_{ij} +2t} (-1)^r B^{(r)}_{i,\ov{p}} E_j^\mu(\tK'_j)^{\mu+2k}F_j^\nu B_{i,\ov{p}+\ov{na_{ij}}}^{(s)}=0;
\end{align*}
note the identity \eqref{eq:SLk} serves as the base case for the induction with $m =1- (\mu+\nu+2k)a_{ij}$ in Proposition~\ref{prop:SLinduct}.
In particular, for $t =(k -\beta) a_{ij}$, the above identity leads to the following identity (where a power of the central element $\tK_j\tK'_j$ in $\tU$ is freely inserted):
\begin{align*}
\sum_{r+s=1- (\mu+\nu+2\beta)a_{ij}} (-1)^r B^{(r)}_{i,\ov{p}} E_j^\mu(\tK'_j)^{\mu+2k}F_j^\nu (\tK_j\tK'_j)^{\beta-k} B_{i,\ov{p}+\ov{na_{ij}}}^{(s)}=0.
\end{align*}
The theorem follows from this identity and Lemma~\ref{lem:span}.
\end{proof}

\begin{rem}
 \label{rem:BVT}
Conjectures and examples of Serre-Lusztig relations of minimal degree were proposed earlier by Baseilhac and Vu \cite{BaV14, BaV15} for certain split $\imath$quantum groups (with $a_{ij}=-1, -2$); the conjectural relations on $q$-Onsager algebra in \cite{BaV14} (with $a_{ij}=-2$) were subsequently established in \cite{Ter18} by applying the braid group action from \cite{BaK20}. Their formulas are expressed in terms of monomials in Chevalley generators $B_i$ and look very different from the compact formula given in Theorem~\ref{thm:minLS}. We verified that our formulas for $n=2, 3, 4, 5$ (with $a_{ij}=-1$) and for $n=2,3$ (with $a_{ij}=-2$) agree with theirs. An anonymous referee has kindly provided further evidence showing that the combinatorics used in these two different formulations of Serre-Lusztig relations of minimal degree is agreeable to each other, for all $n$.

It will be interesting to understand better possible connections between the aforementioned works using tridiagonal pairs and ours using $\imath$divided powers.
Our current proof of Theorem \ref{thm:minLS} (and then of Theorem {\bf A}) is long and computational.  It will be interesting to see if the approach of \cite{Ter18} for the $q$-Onsager algebra can be extended to arbitrary (quasi-split) $\imath$quantum groups and if this will lead to an alternative proof of Theorem \ref{thm:minLS}.
\end{rem}

The following simple lemma will be used later.
\begin{lem}
  \label{lem:span0}
Suppose  $\tau j=j= \wb j$.
  \begin{enumerate}
\item
$B_{j,\ov{t}}^{(n)}$ is a linear combination of $\{ B_j^{n-2t} \tk_j^t \mid 0\le t \le \lfloor n/2 \rfloor \}$;
\item
$B_{j}^{n}$ is a linear combination of $\{ B_{j,\ov{t}}^{(n-2t)}  \tk_j^t \mid 0\le t \le \lfloor n/2 \rfloor \}$.
\end{enumerate}
\end{lem}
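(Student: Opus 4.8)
The plan is to obtain part (1) by reading it straight off the defining formulas \eqref{eq:iDPodd}--\eqref{eq:iDPev} (legitimately applied with $i$ replaced by $j$, since $\tau j=j=\wb j$), and then to deduce part (2) from part (1) by a triangular-inversion argument. No hard input is needed beyond the explicit shape of the $\imath$divided powers and the centrality of $\tk_j$.

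For part (1), I would first recall that $\tk_j=\tK_j\tK'_j$ is central in $\tUi$, so it commutes with $B_j$ and all the monomials $B_j^a\tk_j^b$ are unambiguous. Each of the four formulas \eqref{eq:iDPodd}--\eqref{eq:iDPev} presents $B_{j,\ov{t}}^{(n)}$ as $\frac{1}{[n]_j!}$ times a product of factors of the form $(B_j^2-q_j\tk_j\,c_l)$ with $c_l\in\K(q)$, multiplied on the left by a single $B_j$ when $n$ is odd. Expanding the product, each factor contributes either $B_j^2$ or a scalar multiple of $\tk_j$; since $\tk_j$ is central, every resulting term is of the form $B_j^{\,n-2s}\tk_j^{\,s}$ with $0\le s\le\lfloor n/2\rfloor$ and $\K(q)$-coefficient, which is exactly (1). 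I would also record, for use in (2), that the unique top $B_j$-degree term (taking $B_j^2$ from every factor) is $\frac{1}{[n]_j!}B_j^{\,n}$, with leading coefficient $\frac{1}{[n]_j!}\neq0$ for both parities $\ov{t}$.

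For part (2), I would fix $n$ and consider the two ordered families in $\tUi$
\[
v_t:=B_{j,\ov{t}}^{(n-2t)}\,\tk_j^{\,t},
\qquad
w_u:=B_j^{\,n-2u}\,\tk_j^{\,u},
\qquad 0\le t,u\le\lfloor n/2\rfloor,
\]
which have the same cardinality. Applying part (1) to $B_{j,\ov{t}}^{(n-2t)}$ and multiplying by the central $\tk_j^{\,t}$ gives $v_t=\sum_u M_{u,t}\,w_u$ with $M_{u,t}\in\K(q)$, and the leading-term observation shows $M_{u,t}=0$ for $u<t$ and $M_{t,t}=\frac{1}{[n-2t]_j!}\neq0$. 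Thus the square matrix $M=(M_{u,t})$ is triangular over $\K(q)$ with nonzero diagonal, hence invertible; inverting it expresses every $w_u$, and in particular $w_0=B_j^{\,n}$, as a $\K(q)$-linear combination of the $v_t$, which is the claim.

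The only genuinely delicate point is the bookkeeping in part (2). I would emphasize that the entries $M_{u,t}$ are honest scalars in $\K(q)$ and not expressions still carrying $\tk_j$ (the $\tk_j$-power being recorded in the index $u$ through $w_u$), and that the parity labels $\ov{t}$ do not spoil triangularity: the leading term $\frac{1}{[n-2t]_j!}B_j^{\,n-2t}$ of $B_{j,\ov{t}}^{(n-2t)}$ is the same for both parities, so the diagonal of $M$ is nonzero regardless of the parities chosen. Note that no linear independence of the $w_u$ inside $\tUi$ is required, since the inversion happens purely at the level of the transition matrix $M$. A workable alternative would be a direct induction on $n$ rewriting $B_j^{\,n}=B_j\cdot B_j^{\,n-1}$ via the recursion \eqref{lem:dividied power}, but the triangular argument is cleaner and makes the role of the nonvanishing leading coefficient transparent.
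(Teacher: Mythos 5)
Your proof is correct and follows essentially the same route as the paper: part (1) read off from the defining products \eqref{eq:iDPodd}--\eqref{eq:iDPev} using centrality of $\tk_j$, and part (2) by observing that the two families are related by an invertible change of basis. The paper simply asserts that both families are bases of the same space $V_n$; your explicit triangular transition matrix with diagonal entries $\frac{1}{[n-2t]_j!}$ is exactly the justification the paper leaves implicit, and your remark that no linear independence of the $w_u$ in $\tUi$ is needed is a correct and worthwhile clarification.
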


\begin{proof}
Part (1) follows from the definition of $B_{j,\ov{t}}^{(n)}$ \eqref{eq:iDPodd}--\eqref{eq:iDPev}.
Then, $\{ B_j^{n-2t} \tk_j^t \mid 0\le t \le \lfloor n/2 \rfloor \}$ and $\{ B_{j,\ov{t}}^{(n-2t)}  \tk_j^t \mid 0\le t \le \lfloor n/2 \rfloor \}$ span the same vector space $V_n$ and they are bases of $V_n$. Now Part (2) follows.
\end{proof}

\subsection{Reduction of Theorem~\ref{thm:evenjfixed} to \eqref{eq:serre11F}--\eqref{eq:serre11evenodd}}
  \label{subsec:reduction}

{\em For the proof of \eqref{eq:UiS1} in Theorem~\ref{thm:evenjfixed} in \S\ref{subsec:reduction}--\ref{subsec:evev} and Appendix~\ref{App:A}, we set
\[
i=1 \in \Iw, \quad j=2 \in \Iw.
\]
}

We recall the following PBW expansion formulas of the $\imath$divided powers.

\begin{lem}
 [\text{\cite[Propositions 2.8, 3.5]{BeW18}}]
   \label{lem:iDPdot}
For $m\ge 1$ and $\la \in \Z$, we have
\begin{align}
B_{1,\ev}^{(2m)} \onestar_{2\la}
&\small
= \sum_{c=0}^m \sum_{a=0}^{2m-2c} q_1^{2(a+c)(m-a-\la)-2ac-\binom{2c+1}{2}} \qbinom{m-c-a-\la}{c}_{q_1^2}
 E^{(a)}_1  F^{(2m-2c-a)}_1\onestar_{2\la},
\label{t2mdot}
\\
B_{1,\ev}^{(2m-1)} \onestar_{2\la}
&=  \sum_{c=0}^{m-1} \sum_{a=0}^{2m-1-2c}
q_1^{2(a+c)(m-a-\la)-2ac-a-\binom{2c+1}{2}} \times
 \label{t2m-1dot}   \\
& \qquad\qquad\qquad \qbinom{m-c-a-\la-1}{c}_{q_1^2}  E_1^{(a)}  F_1^{(2m-1-2c-a)}\onestar_{2\la},
\notag
\\
B_{1,\odd}^{(2m)} \onestar_{2\la-1}
&= \sum_{c=0}^m \sum_{a=0}^{2m-2c}
 q_1^{2(a+c)(m-a-\la)-2ac+a-\binom{2c}{2}} \times
 \label{t2mdot2}  \\
& \qquad\qquad\qquad \qbinom{m-c-a-\la}{c}_{q_1^2}
 E^{(a)}_1  F^{(2m-2c-a)}_1\onestar_{2\la-1},
\notag
\\
B_{1,\odd}^{(2m+1)} \onestar_{2\la-1}
&=  \sum_{c=0}^{m} \sum_{a=0}^{2m+1-2c}
q_1^{2(a+c)(m-a-\la)-2ac+2a-\binom{2c}{2}} \times
\label{t2m-1dot2} \\
& \qquad\qquad\qquad  \qbinom{m-c-a-\la+1}{c}_{q_1^2}  E_1^{(a)}  F_1^{(2m+1-2c-a)}\onestar_{2\la-1}.
\notag
\end{align}
\end{lem}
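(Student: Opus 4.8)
All four expansions are established simultaneously by induction on $m$ inside the rank-one modified algebra $\Udot$ attached to $i=1$; they are instances of a single PBW straightening and differ only in the parities of the total degree $r$ and of the weight at which one evaluates. The engine of the induction is the defining recursion \eqref{lem:dividied power}, rewritten as
\[
B_{1,\ov p}^{(r+1)}\onestar = \tfrac{1}{[r+1]_1}\Bigl(B_1\, B_{1,\ov p}^{(r)}\onestar - \delta_{\ov p,\ov r}\, q_1\tk_1[r]_1\, B_{1,\ov p}^{(r-1)}\onestar\Bigr),
\]
which expresses the $(r+1)$-st $\imath$divided power through the $r$-th one and a known lower term. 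Here one uses that in $\Udot$ the central $\tk_1$ becomes the scalar $\vs_1$, so the lower coefficient $q_1\tk_1[r]_1$ is a genuine scalar times $[r]_1$.

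The base cases $r=0,1$ are immediate from $B_{1,\ov p}^{(0)}\onestar=\onestar$ and $B_1\onestar = F_1\onestar + (\text{scalar})\,E_1\onestar$. For the inductive step I would take the assumed PBW form $B_{1,\ov p}^{(r)}\onestar_m = \sum_{a,c}\gamma_{a,c}\,E_1^{(a)}F_1^{(r-2c-a)}\onestar_m$ and left-multiply by $B_1 = F_1 + (\text{scalar})\,E_1K_1^{-1}$. The $E_1$-part is harmless: it merely sends $E_1^{(a)}$ to $[a+1]_1 E_1^{(a+1)}$ and contributes a $q_1$-power from $K_1^{-1}$ acting on the appropriate idempotent, via \eqref{eqn: idempotent Ei Fi}. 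The $F_1$-part is where the content lies: to restore PBW order one invokes the straightening relation \eqref{eqn:commutate-idempotent3} to push the single $F_1$ past $E_1^{(a)}$, producing the two terms $E_1^{(a)}F_1^{(r-2c-a+1)}$ and $\qbinom{\ast}{1}_1 E_1^{(a-1)}F_1^{(r-2c-a)}$. Collecting both contributions and subtracting the inductively known multiple of $B_{1,\ov p}^{(r-1)}\onestar_m$ yields an explicit PBW expansion of $B_{1,\ov p}^{(r+1)}\onestar_m$.

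The heart of the proof — and the step I expect to be the genuine obstacle — is the bookkeeping verification that the monomial coefficients just produced agree with those claimed in \eqref{t2mdot}--\eqref{t2m-1dot2}. After cancelling the common $q_1$-power prefactors this reduces to the $q^2$-Pascal recursions $\qbinom{N}{c}_{q^2}=\qbinom{N-1}{c}_{q^2}+q^{2(N-c)}\qbinom{N-1}{c-1}_{q^2}$ and $\qbinom{N}{c}_{q^2}=q^{2c}\qbinom{N-1}{c}_{q^2}+\qbinom{N-1}{c-1}_{q^2}$, together with a careful matching of the quadratic exponents $2(a+c)(m-a-\la)-2ac-\binom{2c+1}{2}$ and their analogues across the four formulas. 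Since the four cases differ only by these exponent shifts and by whether the $q^2$-binomial argument reads $m-c-a-\la$ or $m-c-a-\la\pm1$, carrying out the reorganization in one case, say \eqref{t2mdot}, fixes the pattern and the remaining three follow by the same computation with the parity of $r$ and of the weight adjusted. As the statement is quoted verbatim from \cite{BeW18}, one may instead simply cite that source; the induction sketched here is the self-contained alternative.
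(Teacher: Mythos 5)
The paper offers no proof of this lemma: it is imported verbatim from \cite[Propositions 2.8, 3.5]{BeW18}, so your closing remark that one may simply cite that source is exactly what the authors do. Your inductive sketch is, for all practical purposes, the argument of \cite{BeW18} itself: build $B_{1,\ov p}^{(r+1)}\onestar$ from $B_1\cdot B_{1,\ov p}^{(r)}\onestar$ and the known lower term via \eqref{lem:dividied power}, straighten the single $F_1$ past $E_1^{(a)}$ with \eqref{eqn:commutate-idempotent3}, and match coefficients through the $q^2$-Pascal identities; as an outline this is sound, though the entire content of the cited propositions is precisely the exponent bookkeeping you defer. Two points deserve care if you carry it out. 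First, the induction is really on the superscript $r$, cycling among the four displayed formulas according to the parities of $r$ and of the weight ($2\la$ versus $2\la-1$); the correction term in \eqref{lem:dividied power} appears only when $\ov p=\ov r$, and this alternation is responsible for the shifts between the binomial arguments $m-c-a-\la$ and $m-c-a-\la\pm1$ across \eqref{t2mdot}--\eqref{t2m-1dot2}. Second, the right-hand sides contain no $\vs_1$ or $\tk_1$ because the lemma is stated for the distinguished parameter $\vs_1=q_1^{-1}$ of \cite{BW18b, BeW18} (so $q_1\vs_1=1$ in \eqref{eq:iDPoddUi}--\eqref{eq:iDPevUi}, and $B_1\onestar_{2\la}=F_1\onestar_{2\la}+q_1^{-2\la-1}E_1\onestar_{2\la}$). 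Your phrase that ``$\tk_1$ becomes the scalar $\vs_1$'' is correct but insufficient: you must fix this specific value for your base case and for the claimed coefficients to come out, since for generic $\vs_1$ the expansions would carry powers of $q_1\vs_1$.
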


The necessity of applying different formulas in Lemma~\ref{lem:iDPdot} forces us to divide the proof of \eqref{eq:UiS1} in Theorem~\ref{thm:evenjfixed} into the 4 cases \eqref{eq:serre11F}--\eqref{eq:serre11evenodd}.

\begin{align}
  \label{eq:serre11F}
  \sum_{r=0}^{1-a_{12}(\mu+\nu+2\beta)} (-1)^r  B_{1,\ev}^{(r)} E_2^\mu K_2^{-(\mu+2\beta)} F_2^\nu B_{1,\ev}^{(1 -a_{12}(\mu+\nu+2\beta)-r)}=0,
\text{ if } a_{12}(\mu+\nu)\in 2\Z_{\geq0};
 \\
  \label{eq:serre11odd}
  \sum_{r=0}^{1-a_{12}(\mu+\nu+2\beta)} (-1)^r  B_{1,\odd}^{(r)}E_2^\mu K_2^{-(\mu+2\beta)} F_2^\nu B_{1,\odd}^{(1 -a_{12}(\mu+\nu+2\beta)-r)}=0,
  \text{ if } a_{12}(\mu+\nu) \in 2\Z_{\geq0};
\\
  \label{eq:serre11oddeven}
 \sum_{r=0}^{1-a_{12}(\mu+\nu+2\beta)} (-1)^r  B_{1,\odd}^{(r)}E_2^\mu K_2^{-(\mu+2\beta)} F_2^\nu B_{1,\ev}^{(1 -a_{12}(\mu+\nu+2\beta)-r)}=0,
  \text{ if } a_{12}(\mu+\nu) \in2\Z_{\geq0}+1;
\\
\label{eq:serre11evenodd}
\sum_{r=0}^{1-a_{12}(\mu+\nu+2\beta)} (-1)^r  B_{1,\ev}^{(r)}E_2^\mu K_2^{-(\mu+2\beta)} F_2^\nu B_{1,\odd}^{(1 -a_{12}(\mu+\nu+2\beta)-r)}=0,
 \text{ if } a_{12}(\mu+\nu) \in 2\Z_{\geq0}+1.
\end{align}

In \S\ref{subsec:proof1}--\ref{subsec:evev} below, we shall prove the identity \eqref{eq:serre11F};  similar proofs of the other identities \eqref{eq:serre11odd}--\eqref{eq:serre11evenodd} are postponed to  Appendix \ref{App:A}.

\subsection{PBW expansion of LHS\eqref{eq:serre11F}}
  \label{subsec:proof1}

To prove \eqref{eq:serre11F}, we shall establish its counterpart in $\Udot$ as formulated in \eqref{eq:dot0} below, thanks to Remark \ref{rem:u=0}. In the remainder of this section, we denote
\[
\alpha=-a_{12}\in\Z_{\ge 0}.
\]

For any $\beta\in\Z_{\geq0}$, we shall use (\ref{t2mdot})--(\ref{t2m-1dot}) to rewrite the element
\begin{align}
  \label{eq:BFBjfix}
& \small\sum_{r=0}^{\alpha(\mu+\nu+2\beta)+1}
(-1)^rB_{1,\ev}^{(r)}E_2^\mu K_2^{-(\mu+2\beta)}F_2^\nu B_{1,\ev}^{(\alpha(\mu+\nu+2\beta)+1-r)}\onestar_{2\la}\in\Udot,
\end{align}
for any $\la\in\Z$, in terms of monomials in $E_1, F_1, F_2, E_2, \tK_2^{-1}$.

\vspace{2mm}

\noindent{\underline{Case I: $r$ is even}.} It follows from \eqref{t2m-1dot} that
\begin{align*}
B_{1,\ev}^{(\alpha(\mu+\nu+2\beta)+1-r)}\onestar_{2\la}
&=\sum_{c=0}^{\frac{\alpha}{2}(\mu+\nu+2\beta)-\frac{r}{2}} \sum_{a=0}^{\alpha(\mu+\nu+2\beta)+1-r-2c}q_1^{(a+c)(\alpha(\mu+\nu+2\beta)+2-r-2a-2\la)-2ac-a-c(2c+1)} \\&
\qquad \cdot \qbinom{\frac{\alpha}{2}(\mu+\nu+2\beta)-\frac{r}{2}-c-a-\la}{c}_{q_1^2}  E_1^{(a)}  F_1^{(\alpha(\mu+\nu+2\beta)+1-r-2c-a)}\onestar_{2\la}.
\notag
\end{align*}
By \eqref{eqn: idempotent Ei Fi}--\eqref{eqn: idempotent Ej Fj} we have $F_2\onestar_{\la}=\onestar_{\la+\alpha}F_2$,
$E_2\onestar_{\la}=\onestar_{\la-\alpha}E_2$, and hence
\begin{align*}
E_2^\mu & K_2^{-(\mu+2\beta)}F_2^\nu E_1^{(a)}  F_1^{(\alpha(\mu+\nu+2\beta)+1-r-2c-a)}\onestar_{2\la}
\\
=&\onestar_{2(\la+2a+r+2c-1-\frac{3\alpha\mu}{2}-\frac{\alpha\nu}{2}-2\alpha\beta)}E_2^\mu K_2^{-(\mu+2\beta)}F_2^\nu  E_1^{(a)}  F_1^{(\alpha(\mu+\nu+2\beta)+1-r-2c-a)}.
\end{align*}
Furthermore, by using (\ref{t2mdot}), we have
\begin{align*}
 B_{1,\ev}^{(r)} & \onestar_{2(\la+2a+r+2c-1-\frac{3\alpha\mu}{2}-\frac{\alpha\nu}{2}-2\alpha\beta)}
 \\
&= \sum_{e=0}^{\frac{r}{2}} \sum_{d=0}^{r-2e}
 q_1^{2(d+e)(\frac{3\alpha\mu}{2}+\frac{\alpha\nu}{2}+2\alpha\beta+1-d-\la-2a-\frac{r}{2}-2c)-2de-e(2e+1)}
\\
&\quad \cdot \qbinom{\frac{3\alpha\mu}{2}+\frac{\alpha\nu}{2}+2\alpha\beta+1-e-d-\la-2a-\frac{r}{2}-2c}{e}_{q_1^2}
\\
&\quad \cdot E_1^{(d)}  F_1^{(r-2e-d)}\onestar_{2(\la+2a+r+2c-1-\frac{3\alpha\mu}{2}-\frac{\alpha\nu}{2}-2\alpha\beta)}.
\end{align*}
Hence combining the above 3 computations gives us
\begin{align}
  \label{eq:BFBjfix}
&\quad \;B_{1,\ev}^{(r)}E_2^\mu K_2^{-(\mu+2\beta)} F_2^\nu B_{1,\ev}^{(\alpha(\mu+\nu+2\beta)+1-r)}\onestar_{2\la} \\
 &=\sum_{e=0}^{\frac{r}{2}} \sum_{d=0}^{r-2e}\sum_{c=0}^{\frac{\alpha}{2}(\mu+\nu+2\beta)-\frac{r}{2}} \sum_{a=0}^{\alpha(\mu+\nu+2\beta)+1-r-2c}\notag
\\
&\qquad q_1^{(a+c+d+e)(\alpha(\mu+\nu+2\beta)+1-r-2\la-2a-2c-2d-2e)+2\alpha(\mu+\beta)(d+e)+d}
  \notag \\
 & \quad \cdot \qbinom{\frac{\alpha}{2}(\mu+\nu+2\beta)-\frac{r}{2}-c-a-\la}{c}_{q_1^2}
\qbinom{\frac{3\alpha\mu}{2}+\frac{\alpha\nu}{2}+2\alpha\beta+1-e-d-\la-2a-\frac{r}{2}-2c}{e}_{q_1^2}
   \notag \\
 & \quad \cdot E_1^{(d)}  F_1^{(r-2e-d)}E_2^\mu K_2^{-(\mu+2\beta)}F_2^\nu E_1^{(a)}  F_1^{(\alpha(\mu+\nu+2\beta)+1-r-2c-a)}\onestar_{2\la}
 \notag
 \\
 &=\sum_{e=0}^{\frac{r}{2}} \sum_{d=0}^{r-2e}\sum_{c=0}^{\frac{\alpha}{2}(\mu+\nu+2\beta)-\frac{r}{2}} \sum_{a=0}^{\alpha(\mu+\nu+2\beta)+1-r-2c}
 \notag\\
&\qquad q_1^{(a+c+d+e)(\alpha(\mu+\nu)+1-r-2\la-2a-2c-2d-2e)+2\alpha(\mu+\beta)(d+e)+d}q_1^{\alpha(\mu+2\beta)(r-2e-d)}
  \notag \\
 & \quad \cdot \qbinom{\frac{\alpha}{2}(\mu+\nu+2\beta)-\frac{r}{2}-c-a-\la}{c}_{q_1^2}
\qbinom{\frac{3\alpha\mu}{2}+\frac{\alpha\nu}{2}+2\alpha\beta+1-e-d-\la-2a-\frac{r}{2}-2c}{e}_{q_1^2}
   \notag \\
 & \quad \cdot  E_1^{(d)}  E_2^\mu K_2^{-(\mu+2\beta)} F_1^{(r-2e-d)}E_1^{(a)}F_2^\nu  F_1^{(\alpha(\mu+\nu+2\beta)+1-r-2c-a)}\onestar_{2\la}.
 \notag
\end{align}
Here the second equality follows by using \eqref{eq:EK} and \eqref{Q4}.

Next, inspired by the PBW basis of $\U$, we move the divided powers of $E_1$ in the middle to the left.
Using \eqref{eqn: idempotent Ei Fi}--\eqref{eqn: idempotent Ej Fj} we have
\begin{align*}
F_2^\nu F_1^{(\alpha(\mu+\nu+2\beta)+1-r-2c-a)}\onestar_{2\la} 
 &=\onestar_{2(\la+r+2c+a-\alpha\mu-2\alpha\beta-\frac{\alpha\nu}{2}-1)}F_2^\nu  F_1^{(\alpha(\mu+\nu+2\beta)+1-r-2c-a)}.
\end{align*}
Using \eqref{eqn:commutate-idempotent3} we have
\begin{align*}
F_1^{(r-2e-d)} & E_1^{(a)} \onestar_{2(\la+r+2c+a-\alpha\mu-2\alpha\beta-\frac{\alpha\nu}{2}-1)}\\
&=\sum^{\min\{a,r-2e-d\}}_{b=0}\qbinom{r-2e-d-a-2(\la+r+2c+a-\alpha\mu-2\alpha\beta-\frac{\alpha\nu}{2}-1)}{b}_{q_1}\\
&\qquad\qquad\qquad\quad \cdot E_1^{(a-b)}F_1^{(r-2e-d-b)}\onestar_{2(\la+r+2c+a-\alpha\mu-2\alpha\beta-\frac{\alpha\nu}{2}-1)}\\
&=\sum^{\min\{a,r-2e-d\}}_{b=0}\qbinom{2\alpha\mu+\alpha\nu+4\alpha\beta+2-2e-d-3a-2\la-4c-r}{b}_{q_1} \\
&\qquad\qquad\qquad\quad \cdot E_1^{(a-b)} F_1^{(r-2e-d-b)}\onestar_{2(\la+r+2c+a-\alpha\mu-2\alpha\beta-\frac{\alpha\nu}{2}-1)}.
\end{align*}

Plugging these new formulas into \eqref{eq:BFBjfix}, we obtain
\begin{align}
\small
&  \sum_{r=0,2\mid r}^{\alpha(\mu+\nu+2\beta)+1}
    B_{1,\ev}^{(r)}E_2^\mu K_2^{-(\mu+2\beta)}F_2^\nu B_{1,\ev}^{(\alpha(\mu+\nu+2\beta)+1-r)}\onestar_{2\la}\label{eqn: first even j fix}\\\notag
&=\sum_{r=0,2\mid r}^{\alpha(\mu+\nu+2\beta)+1}\sum_{c=0}^{\frac{\alpha}{2}(\mu+\nu+2\beta)-\frac{r}{2}}\sum_{e=0}^{\frac{r}{2}} \sum_{a=0}^{\alpha(\mu+\nu+2\beta)+1-r-2c}\sum_{d=0}^{r-2e}\sum^{\min\{a,r-2e-d\}}_{b=0}
\\
&\qquad q_1^{(a+c+d+e)(\alpha(\mu+\nu+2\beta)+1-r-2\la-2a-2c-2d-2e)+2\alpha(\mu+\beta)(d+e)+d}q_1^{\alpha(\mu+2\beta)(r+a-b-2e-d)}\notag\\
&\quad\cdot
\qbinom{2\alpha\mu+\alpha\nu+4\alpha\beta+2-2e-d-3a-2\la-4c-r}{b}_{q_1}\notag
\\
&\quad \cdot \qbinom{\frac{3\alpha\mu}{2}+\frac{\alpha\nu}{2}+2\alpha\beta+1-e-d-\la-2a-\frac{r}{2}-2c}{e}_{q_1^2}
\qbinom{\frac{\alpha}{2}(\mu+\nu+2\beta)-\frac{r}{2}-c-a-\la}{c}_{q_1^2}\notag
\\
&\quad \cdot E_1^{(d)}E_2^\mu E_1^{(a-b)} K_2^{-(\mu+2\beta)}F_1^{(r-2e-d-b)}F_2^\nu F_1^{(\alpha(\mu+\nu+2\beta)+1-r-2c-a)}\onestar_{2\la}.
\notag
\end{align}

\vspace{2mm}
\noindent{\underline{Case II: $r$ is odd}.}
Similarly, by (\ref{t2mdot}) we have
\begin{align*}
&\quad B_{1,\ev}^{(\alpha(\mu+\nu+2\beta)+1-r)}\onestar_{2\la}
\\
&=\sum_{c=0}^{\frac{\alpha}{2}(\mu+\nu+2\beta)+\frac{1-r}{2}} \sum_{a=0}^{\alpha(\mu+\nu+2\beta)+1-r-2c} q_1^{(a+c)(\alpha(\mu+\nu+2\beta)+1-r-2a-2\la)-2ac-c(2c+1)} \\&
\qquad \cdot \qbinom{\frac{\alpha}{2}(\mu+\nu+2\beta)+\frac{1-r}{2}-c-a-\la}{c}_{q_1^2}  E_1^{(a)}  F_1^{(\alpha(\mu+\nu+2\beta)+1-r-2c-a)}\onestar_{2\la}.
\notag
\end{align*}
Using (\ref{t2m-1dot}) we have
\begin{align*}
B_{1,\ev}^{(r)} & \onestar_{2(\la+2a+r+2c-1-\frac{3\alpha\mu}{2}-\frac{\alpha\nu}{2}-2\alpha\beta)}\\
&=\sum_{e=0}^{\frac{r-1}{2}} \sum_{d=0}^{r-2e}q_1^{2(d+e)(\frac{3\alpha\mu}{2}+\frac{\alpha\nu}{2}+2\alpha\beta+1-d-\la-2a-\frac{r-1}{2}-2c)-2de-d-e(2e+1)} \\
&\quad \cdot \qbinom{\frac{3\alpha\mu}{2}+\frac{\alpha\nu}{2}+2\alpha\beta-e-d-\la-2a-\frac{r-1}{2}-2c}{e}_{q_1^2}  \\
&\quad \cdot E_1^{(d)}  F_1^{(r-2e-d)}\onestar_{2(\la+2a+r+2c-1-\frac{3\alpha\mu}{2}-\frac{\alpha\nu}{2}-2\alpha\beta)}.
\end{align*}
Combining the above two formulas and simplifying the resulting expression, we obtain the following equality:
\begin{align}
& \sum_{r=1,2\nmid r}^{\alpha(\mu+\nu+2\beta)+1} B_{1,\ev}^{(r)}E_2^\mu K_2^{-(\mu+2\beta)}F_2^\nu B_{1,\ev}^{(\alpha(\mu+\nu+2\beta)+1-r)}\onestar_{2\la}\label{eqn: first odd j fix}\\
&  =  \sum_{r=1,2\nmid r}^{\alpha(\mu+\nu+2\beta)+1}\sum_{c=0}^{\frac{\alpha}{2}(\mu+\nu+2\beta)+\frac{1-r}{2}} \sum_{e=0}^{\frac{r-1}{2}}
\sum_{a=0}^{\alpha(\mu+\nu+2\beta)+1-r-2c}\sum_{d=0}^{r-2e}\sum^{\min\{a,r-2e-d\}}_{b=0}
\notag\\
&\quad q_1^{(a+c+d+e)(\alpha(\mu+\nu+2\beta)+2-r-2\la-2a-2c-2d-2e)-a-2c+2\alpha(\mu+\beta)(d+e)}q_1^{\alpha(\mu+2\beta)(r+a-b-2e-d)} \notag \\
& \notag \quad \cdot
\qbinom{2\alpha\mu+\alpha\nu+4\alpha\beta+2-2e-d-3a-2\la-4c-r}{b}_{q_1}
\\
&\quad \cdot
\qbinom{\frac{3\alpha\mu}{2}+\frac{\alpha\nu}{2}+2\alpha\beta-e-d-\la-2a-\frac{r-1}{2}-2c}{e}_{q_1^2}   \notag
\qbinom{\frac{\alpha}{2}(\mu+\nu+2\beta)+\frac{1-r}{2}-c-a-\la}{c}_{q_1^2}\\\notag
 &\quad \cdot E_1^{(d)}E_2^\mu E_1^{(a-b)} K_2^{-(\mu+2\beta)}F_1^{(r-2e-d-b)}F_2^\nu F_1^{(\alpha(\mu+\nu+2\beta)+1-r-2c-a)}\onestar_{2\la}.
\notag
\end{align}

Therefore, by combining the computations \eqref{eqn: first even j fix}--\eqref{eqn: first odd j fix} which depend on the parity of $r$ above, we obtain the following formula for \eqref{eq:BFBjfix}:
\begin{align}
\label{eq:evev}
& \sum_{r=0}^{\alpha(\mu+\nu+2\beta)+1}(-1)^r
    B_{1,\ev}^{(r)}E_2^\mu K_2^{-(\mu+2\beta)}F_2^\nu B_{1,\ev}^{(\alpha(\mu+\nu+2\beta)+1-r)}\onestar_{2\la}  = \text{RHS}\, \eqref{eqn: first even j fix} - \text{RHS}\, \eqref{eqn: first odd j fix}.
\end{align}

We change variables by setting
\begin{align}
  \label{eq:change}
\begin{split}
l =a+d-b,
\quad
y &=r-2e-d-b,
\\
u =b +c +e,
\quad
w &=\alpha(\mu+\nu+2\beta) +2 -2\lambda -4u -2l   + d-y. 
\end{split}
\end{align}
(Sometimes, we shall write $w=w_y$ when it helps to indicate its dependence on $y$ below.)
Note
\[
r\equiv w-b \pmod 2.
\]
Define
\begin{align}
 \label{eq:T1jfix}
T(w, & u,l, \mu,\beta)
\\
:=& \small \sum_{\stackrel{b+c+e =u}{2|(w-b)}}
q_1^{(e-c-w)(\alpha\mu-l-u)+u(\alpha\mu-l)+ { 2\alpha\beta b+2\alpha\beta e}}\notag
\\
&\quad \cdot
\qbinom{\alpha(\mu+2\beta)+w-l}{b}_{q_1}
\qbinom{u-1 +\frac{w-b}2}{c}_{q_1^2}
\qbinom{\alpha(\mu+\beta) -l + \frac{w-b}{2}}{e}_{q_1^2}
\notag
\\
-&\small \sum_{\stackrel{b+c+e =u}{2 \nmid (w-b)} }
q_1^{(e-c-w)(\alpha\mu+1-l-u)+u(\alpha\mu-l)+w+ { 2\alpha\beta b+2\alpha\beta e}}\notag
\\
&\quad \cdot
\qbinom{\alpha(\mu+2\beta)+w-l}{b}_{q_1}
\qbinom{u-1 +\frac{w-b+1}2}{c}_{q_1^2}
\qbinom{\alpha(\mu+\beta) -l + \frac{w-b-1}{2}}{e}_{q_1^2}.
\notag
\end{align}

In these new notations, we rewrite \eqref{eq:evev} as
\begin{align}
\label{eq:evev3jfix}
 \sum_{r=0}^{\alpha (\mu+\nu+2\beta)+1}
& (-1)^r    B_{1,\ev}^{(r)}E_2^\mu K_2^{-(\mu+2\beta)}F_2^\nu B_{1,\ev}^{(\alpha(\mu+\nu+2\beta)+1-r)}\onestar_{2\la}\\\notag
& =\sum_{u=0}^{\frac{\alpha}{2}(\mu+\nu+2\beta)}\sum_{l=0}^{\alpha(\mu+\nu+2\beta)+1-2u} \sum_{d=0}^{l} \sum_{y=0}^{\alpha(\mu+\nu+2\beta)+1-l-2u}
\\
&\quad \cdot q_1^{d+ {2\alpha\beta(\alpha\mu+l+y)}+\alpha\mu(\alpha(\mu+\nu)+2-2\lambda -4u-l+2d)+(l +u)(u-2d-1)+l u}
T(w_y,u,l,\mu,\beta)
\notag\\
&\quad \cdot  E_1^{(d)}E_2^\mu E_1^{(l-d)} K_2^{-(\mu+2\beta)}F_1^{(y)}F_2^\nu F_1^{(\alpha(\mu+\nu+2\beta)+1-l-y-2u)}\onestar_{2\la}. \notag
\end{align}

\subsection{Final reduction to $q$-binomial identities}
 \label{subsec:ReductionGH}

Recall the function $G(w,u, \ell;p_0,p_1,p_2)$ from \eqref{eq:Twxyu}. It will be written below as $G_q(w,u, \ell;p_0,p_1,p_2)$ to indicate its dependence on $q$, since  in the application below it is necessary to replace $q$ by $q_1$.

\begin{lem}
For any $w\in\Z$, $u, l, \mu, \beta \in\Z_{\geq0}$,
we have
\begin{align}
T(w,u,l,\mu,\beta) &= (-1)^wq_1^{-(\alpha\mu-l-2u)w-u^2} \times
    \label{eq:gT=G}
\\
& \qquad\quad G_{q_1} (w,u,0 ;\alpha(\mu+2\beta)-l ,u-1,\alpha(\mu+\beta)-l ),
\notag
 \\
T(w+1,u,l,\mu,\beta) &= -q_1^{-(\alpha\mu-l-2u)}T(w,u,l,\mu,\beta).
 \label{eq:w+1}
\end{align}
\end{lem}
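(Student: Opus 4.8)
The plan is to prove \eqref{eq:gT=G} by a direct term-by-term comparison of the two explicit sums, and then to deduce the recursion \eqref{eq:w+1} as a formal consequence of \eqref{eq:gT=G} together with the recursions for $G$ recorded in Lemma~\ref{lem:Gwuxp0p1p2} (which hold verbatim with $q$ replaced by $q_1$).

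For \eqref{eq:gT=G}, first I would specialize the definition \eqref{eq:Twxyu} of $G$ to $\ell=0$, so that the inner sum over $t$ collapses to the single term $t=0$ and the factor $\qbinom{\ell}{t}_q$ disappears. I would then substitute $p_0=\alpha(\mu+2\beta)-l$, $p_1=u-1$, $p_2=\alpha(\mu+\beta)-l$ into $G_{q_1}(w,u,0;p_0,p_1,p_2)$ and match it against the two sums in \eqref{eq:T1jfix}. The three $q_1$-binomial factors agree on the nose: with $t=0$ one has $w+t+p_0=\alpha(\mu+2\beta)+w-l$, and in the even branch $2\mid(w-b)$ the arguments $\tfrac{w-b}{2}+p_1=u-1+\tfrac{w-b}{2}$ and $\tfrac{w-b}{2}+p_2=\alpha(\mu+\beta)-l+\tfrac{w-b}{2}$ reproduce exactly the $q_1^2$-binomials of the even sum of $T$, while the odd branch $2\nmid(w-b)$ reproduces those of the odd sum. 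It then remains to check that, after absorbing the scalar prefactor $(-1)^wq_1^{-(\alpha\mu-l-2u)w-u^2}$ from the statement together with the prefactor $(-1)^wq_1^{u^2-wu}$ built into $G$, the residual $q_1$-exponents agree in each branch.

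This exponent bookkeeping is the one genuinely laborious step, and it is where I expect the main obstacle to lie. Combining the two prefactors produces the common factor $q_1^{-w(\alpha\mu-l-u)}$, so in each branch the task reduces to a polynomial identity in $b,c,e,u,l,\mu,\beta,w$; the crucial simplification is to eliminate $b$ everywhere using $b+c+e=u$, after which the two exponents coincide. Concretely, the factor $q_1^{2\alpha\beta e}$ of $T$ arises from the $\alpha\beta$-part of $2e\,p_2$, the factor $q_1^{2\alpha\beta b}$ of $T$ arises from the $\alpha\beta$-part of the $p_0$-linear contribution, and all remaining terms cancel, the leftover discrepancy in each branch collapsing to $(\alpha\mu-l)(b+c+e-u)=0$. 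I would write out the even branch in full and remark that the odd branch is entirely analogous.

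Finally, for \eqref{eq:w+1} I would apply \eqref{eq:gT=G} to both $T(w,u,l,\mu,\beta)$ and $T(w+1,u,l,\mu,\beta)$; after cancelling the matching scalar prefactors, the asserted recursion is equivalent to the single statement that $G_{q_1}(w,u,0;p_0,p_1,p_2)$ is independent of $w$. This $w$-independence is immediate from Lemma~\ref{lem:Gwuxp0p1p2}: evaluating \eqref{eq:Gx+1w} at $\ell=0$ gives $G_{q_1}(w,u,1;p_0,p_1,p_2)=q_1^{u}\bigl(G_{q_1}(w,u,0;p_0,p_1,p_2)-G_{q_1}(w+1,u,0;p_0,p_1,p_2)\bigr)$, while the left-hand side vanishes by \eqref{eq:G=0} since $\ell=1>0$; hence $G_{q_1}(w+1,u,0;p_0,p_1,p_2)=G_{q_1}(w,u,0;p_0,p_1,p_2)$. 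Tracking the extra prefactor $q_1^{-(\alpha\mu-l-2u)}$ and the sign $(-1)$ coming from $w\mapsto w+1$ in \eqref{eq:gT=G} then yields \eqref{eq:w+1} exactly.
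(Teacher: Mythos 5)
Your proposal is correct and follows essentially the same route as the paper: the identity \eqref{eq:gT=G} is verified by direct comparison of the two defining sums at $\ell=0$ (the paper dispatches this with ``follows by definitions,'' while you supply the exponent bookkeeping, correctly pivoting on $b+c+e=u$), and \eqref{eq:w+1} is deduced exactly as in the paper from the $w$-independence of $G_{q_1}(w,u,0;p_0,p_1,p_2)$, obtained by combining \eqref{eq:Gx+1w} at $\ell=0$ with \eqref{eq:G=0}.
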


\begin{proof}
The identity \eqref{eq:gT=G} follows by definitions of $T$ and $G$ in \eqref{eq:T1jfix} and \eqref{eq:Twxyu}.

Using \eqref{eq:Gx+1w} and \eqref{eq:G=0}, we have
\begin{align*}
G(w,u,0;p_0,p_1,p_2)=G(w+1,u,0 ;p_0,p_1,p_2).
\end{align*}
This identity can then be converted into the identity \eqref{eq:w+1} with the help of \eqref{eq:gT=G}.
\end{proof}

\begin{prop}
\label{prop:gHserre}
The following identities hold:
\begin{align}
\label{eq:gHigherF}
 &\sum_{y=0}^{\alpha(\mu+\nu+2\beta)+1-l-2u}q_1^{d+ {2\alpha\beta(\alpha\mu+l+y)}+2\alpha\mu (\alpha(\mu+\nu)+2-2\lambda -4u-l+2d)+(l +u)(u-2d-1)+l u}\times\\
&\qquad T(w_y,u,l,\mu,\beta) E_1^{(d)}E_2^\mu E_1^{(l-d)} K_2^{ -(\mu+2\beta)} F_1^{(y)}F_2^\nu F_1^{(\alpha(\mu+\nu+2\beta)+1-l-y-2u)}\onestar_{2\la} =0, \notag
\end{align}
if $\alpha\mu\geq l+2u-2\alpha\beta$;
\begin{align}
\label{eq:gHigherE}
&\sum_{d=0}^{\ell} q_1^{d+ {2\alpha\beta(\alpha\mu+l+y)}+2\alpha\mu (\alpha(\mu+\nu)+2-2\lambda -4u-l+2d)+(l +u)(u-2d-1)+l u}\times\\
&\qquad T(w_y,u,l,\mu,\beta) E_1^{(d)}E_2^\mu E_1^{(l-d)} K_2^{ -(\mu+2\beta)} F_1^{(y)}F_2^\nu F_1^{(\alpha(\mu+\nu+2\beta)+1-l-y-2u)}\onestar_{2\la} =0, \notag
\end{align}
if $l> \alpha\mu$.
\end{prop}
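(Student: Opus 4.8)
The plan is to recognize that each of the two identities becomes, after fixing all summation indices except one, a quantum-group Serre-Lusztig relation \eqref{eq:LS} in disguise: \eqref{eq:gHigherF} is assembled from the $F$-version and \eqref{eq:gHigherE} from the $E$-version. The only structural input needed beyond \eqref{eq:LS} is the one-step recursion \eqref{eq:w+1} for $T$ in its first argument, which already packages the hard $G$-identities of Lemma~\ref{lem:Gwuxp0p1p2}; everything else is exponent bookkeeping.

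For \eqref{eq:gHigherF} I would fix $d,u,l,\mu,\nu,\beta,\la$ and view the left-hand side as a sum over $y$ only. The factor $E_1^{(d)}E_2^\mu E_1^{(l-d)}K_2^{-(\mu+2\beta)}$ is independent of $y$ and pulls out on the left, while $\onestar_{2\la}$ stays on the right; writing $F_2^\nu=[\nu]_2^!\,F_2^{(\nu)}$, the remaining $y$-sum becomes $[\nu]_2^!\sum_{y} c_y\,F_1^{(y)}F_2^{(\nu)}F_1^{(N-y)}$ with $N=\alpha(\mu+\nu+2\beta)+1-l-2u$ and $c_y=q_1^{P(y)}T(w_y,u,l,\mu,\beta)$. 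Since the prefactor exponent $P(y)$ is affine in $y$ (its only $y$-dependence being the term $2\alpha\beta y$) and $w_y$ decreases by $1$ when $y$ increases by $1$ (see \eqref{eq:change}), the recursion \eqref{eq:w+1} yields a $y$-independent ratio $c_{y+1}/c_y=-q_1^{\,2\alpha\beta+\alpha\mu-l-2u}$. Telescoping gives $c_y=c_0(-1)^y q_1^{-y(1-\nu a_{12}-N)}$, because $1-\nu a_{12}-N=l+2u-\alpha\mu-2\alpha\beta$; this is exactly the coefficient pattern of $f^-_{1,2;\nu,N,-1}$. Hence the inner sum equals $c_0\,f^-_{1,2;\nu,N,-1}$, and the hypothesis $\alpha\mu\ge l+2u-2\alpha\beta$ is precisely $N\ge 1-\nu a_{12}$, so this vanishes by the $F$-version of \eqref{eq:LS}; reinstating the $y$-independent factors gives \eqref{eq:gHigherF}.

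The identity \eqref{eq:gHigherE} is the mirror image. I would fix $y,u,l,\mu,\nu,\beta,\la$ and sum over $d$; now the factor $K_2^{-(\mu+2\beta)}F_1^{(y)}F_2^\nu F_1^{(\cdots)}\onestar_{2\la}$ is $d$-independent and pulls out on the right, and $E_2^\mu=[\mu]_2^!\,E_2^{(\mu)}$ turns the $d$-sum into $[\mu]_2^!\sum_d c'_d\,E_1^{(d)}E_2^{(\mu)}E_1^{(l-d)}$. Here $w_y$ \emph{increases} by $1$ as $d$ increases by $1$, so \eqref{eq:w+1} combined with the $d$-linear part of the prefactor (matching that of \eqref{eq:evev3jfix}) produces the $d$-independent ratio $c'_{d+1}/c'_d=-q_1^{\,1+\alpha\mu-l}$, which is the coefficient pattern of $f^+_{1,2;\mu,l,+1}$. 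The hypothesis $l>\alpha\mu$ is exactly $l\ge 1-\mu a_{12}$, so $f^+_{1,2;\mu,l,+1}=0$ by \eqref{eq:LS} and \eqref{eq:gHigherE} follows.

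I expect the only genuine work — and the likeliest source of sign and parity slips — to be the exponent bookkeeping that produces the two ratios $c_{y+1}/c_y$ and $c'_{d+1}/c'_d$ and fixes $e=-1$ for the $F$-sum and $e=+1$ for the $E$-sum. Two points demand care: the opposite monotonicity of $w_y$ in $y$ versus in $d$, which flips the sign of the exponent contributed by \eqref{eq:w+1}, and the correct reading of the $q_1$-prefactor. The conceptually hard combinatorics is not re-encountered here, since it was already absorbed into \eqref{eq:w+1} through \eqref{eq:gT=G} and the vanishing $G(w,u,0;\cdots)=G(w+1,u,0;\cdots)$ deduced from \eqref{eq:Gx+1w} and \eqref{eq:G=0}.
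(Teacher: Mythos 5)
Your proposal is correct and follows essentially the same route as the paper: telescope $T(w_y,u,l,\mu,\beta)$ (respectively $T(w_y,\ldots)$ as a function of $d$) via the one-step recursion \eqref{eq:w+1}, recognize the resulting coefficient pattern as the $F$- (resp.\ $E$-) version of the Serre--Lusztig relation \eqref{eq:LS} with $n=\nu$, $m=\alpha(\mu+\nu+2\beta)+1-l-2u$ (resp.\ $n=\mu$, $m=l$), and check that the stated hypotheses are exactly the thresholds $m\geq 1-na_{12}$. Your exponent bookkeeping (the ratios $-q_1^{2\alpha\beta+\alpha\mu-l-2u}$ and $-q_1^{1+\alpha\mu-l}$, and the choices $e=-1$, $e=+1$) matches the paper, which carries out the $F$-case in detail and declares the $E$-case "entirely similar."
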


\begin{proof}
We prove \eqref{eq:gHigherF}.
When comparing the coefficients $T(w_y,u,l,\mu,\beta)$ for various $y$, we keep in mind that $w_{y} = w_{y-1}-1$ by definition of $w$ in \eqref{eq:change} and hence $w_y=w_0 -y$.
Then by using \eqref{eq:w+1} and an induction on $y$ we obtain
\[
T(w_y,u,l,\mu,\beta) = (-1)^y q_1^{y(\alpha\mu-l-2u)}T(w_0,u,l,\mu,\beta).
\]
By a Serre-Lusztig relation, which is the $F$-analog of \eqref{eq:LS} with $n =\nu$ and $m=\alpha(\mu+\nu+2\beta)+1-l-2u$, we have
\begin{align*}
 &\sum_{y=0}^{\alpha(\mu+\nu+2\beta)+1-l-2u} q_1^{{2\alpha\beta y}} T(w_y,u,l,\mu,\beta) F_1^{(y)}F_2^\nu F_1^{(\alpha(\mu+\nu+2\beta)+1-l-y-2u)}\onestar_{2\la}
 \\
 &
 = T(w_0,u,l,\mu,\beta)
 \sum_{y=0}^{\alpha(\mu+\nu+2\beta)+1-l-2u}
 (-1)^y q_1^{y(2\alpha\beta +\alpha\mu-l-2u)} F_1^{(y)}F_2^\nu F_1^{(\alpha(\mu+\nu+2\beta)+1-l-y-2u)}\onestar_{2\la}
 \\
 &=0. \notag
\end{align*}
Therefore, we have
\begin{align*}
& \text{LHS}\, \eqref{eq:gHigherF} =
\\
 & \Big(q_1^{d+{2\alpha\beta(\alpha\mu+l)}+2\alpha\mu (\alpha(\mu+\nu)+2-2\lambda -4u-l+2d)+(l +u)(u-2d-1)+l u} E_1^{(d)}E_2^\mu E_1^{(l-d)} K_2^{ -(\mu+2\beta)}\Big)\\
 &\cdot \Big(\sum_{y=0}^{\alpha(\mu+\nu+2\beta)+1-l-2u}q_1^{2\alpha\beta y} T(w,u,l,\mu,\beta) F_1^{(y)}F_2^\nu F_1^{(\alpha(\mu+\nu+2\beta)+1-l-y-2u)}\onestar_{2\la}\Big)=0. \notag
\end{align*}

The proof of \eqref{eq:gHigherE} by using the Serre-Lusztig relation between $E_1, E_2$ is entirely similar, and hence will be skipped.
\end{proof}

\begin{prop}
\label{prop:gT=0}
For any $l\leq \alpha\mu \leq 2u+l-1-2\alpha\beta$, we have
$T(w,u,l,\mu,\beta)=0$.
\end{prop}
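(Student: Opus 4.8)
The plan is to convert the vanishing of $T(w,u,l,\mu,\beta)$ into the vanishing of the auxiliary function $G$, and then to collapse that value of $G$ to a single value of $H$, which is visibly zero. By the identity \eqref{eq:gT=G}, $T(w,u,l,\mu,\beta)$ equals a nonzero power of $q_1$ times
\[
G_{q_1}(w,u,0;p_0,p_1,p_2), \qquad p_0=\alpha(\mu+2\beta)-l,\quad p_1=u-1,\quad p_2=\alpha(\mu+\beta)-l,
\]
so it suffices to show this value of $G$ vanishes. I first record what the hypotheses say about these parameters. Since $2\alpha\beta$ is even, $p_0$ and $\alpha\mu-l$ have the same parity; the inequality $l\le\alpha\mu$ gives $p_2=(\alpha\mu-l)+\alpha\beta\ge0$ and $p_0=(\alpha\mu-l)+2\alpha\beta\ge0$, while $\alpha\mu\le 2u+l-1-2\alpha\beta$ gives $p_0\le 2u-1$ and, combined with $l\le\alpha\mu$, forces $\alpha\beta\le u-1$.

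Next I reduce the first inner parameter $p_0$ to $0$ using the recursions of Lemma~\ref{lem:Gwuxp0p1p2}. If $p_0$ is even, set $k=p_0/2\ge0$ and apply \eqref{eq:Gk} once to get $G_{q_1}(w,u,0;p_0,p_1,p_2)=q_1^{4ku}\,G_{q_1}(w+2k,u,0;0,\,p_1-k,\,p_2-k)$. If $p_0$ is odd, I first apply \eqref{eq:Godd} to pass to the even first parameter $p_0+1$ (replacing $(p_1,p_2)$ by $(p_2,p_1+1)$), and then apply \eqref{eq:Gk} with $2k=p_0+1$. In either case Lemma~\ref{lem:GH00} rewrites the resulting $G$ (whose first inner parameter is now $0$) as a power of $q_1$ times $H(u;a,b)$, where a short computation gives $a+b=p_1+p_2-p_0=u-1-\alpha\beta$ independently of the case; explicitly, in the even case $(a,b)=\bigl(u-1-\tfrac{p_0}{2},\ \tfrac{\alpha\mu-l}{2}\bigr)$, and in the odd case $(a,b)=\bigl(\tfrac{\alpha\mu-l-1}{2},\ u-\tfrac{p_0+1}{2}\bigr)$.

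Finally I verify $a,b\ge0$ and then read off $H(u;a,b)=0$. In the even case $\alpha\mu-l$ is even and $\ge0$, so $b=\tfrac{\alpha\mu-l}{2}\ge0$, and $p_0\le 2u-1$ with $p_0$ even forces $p_0\le 2u-2$, whence $a=u-1-\tfrac{p_0}{2}\ge0$; in the odd case $\alpha\mu-l$ is odd and positive, so $a=\tfrac{\alpha\mu-l-1}{2}\ge0$, while $p_0\le 2u-1$ gives $b=u-\tfrac{p_0+1}{2}\ge0$. Thus in both cases $a,b\ge0$ with $a+b=u-1-\alpha\beta<u$ (and $\alpha\beta\le u-1$ keeps $a+b\ge0$, consistent with $a,b\ge0$). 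Now $H(u;a,b)=\sum_{c+e=u}q_1^{2c+2ca+2eb}\qbinom{a}{c}_{q_1^2}\qbinom{b}{e}_{q_1^2}$ vanishes term by term, since a nonzero summand would require $0\le c\le a$ and $0\le e\le b$, forcing $u=c+e\le a+b<u$, a contradiction. Hence $G_{q_1}(w,u,0;p_0,p_1,p_2)=0$ and therefore $T(w,u,l,\mu,\beta)=0$. The only delicate point is ensuring the nonnegativity of the arguments $a,b$ of $H$ — without it the term-by-term vanishing fails — and this is exactly where both inequalities of the hypothesis, together with the parity bookkeeping of $p_0$, enter; the rest is a routine application of the recursions for $G$.
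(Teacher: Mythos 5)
Your proof is correct and follows essentially the same route as the paper's: convert $T$ to $G_{q_1}(w,u,0;p_0,u-1,\alpha(\mu+\beta)-l)$ via \eqref{eq:gT=G}, normalize the first inner parameter to $0$ using \eqref{eq:Gk} (preceded by \eqref{eq:Godd} in the odd case), invoke Lemma~\ref{lem:GH00}, and kill $H(u;a,b)$ because $a,b\ge 0$ while $a+b=u-1-\alpha\beta<u$. (You even supply the odd-parity details the paper skips, and your value $a+b=u-1-\alpha\beta$ corrects a harmless typo in the paper's displayed $u-1-2\alpha\beta$.)
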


\begin{proof}
Recall that $T(w,u,l,\mu,\beta)$ is proportional to $G_{q_1}(w,u,0;\alpha(\mu+2\beta)-l,u-1,\alpha(\mu+\beta)-l)$; see \eqref{eq:gT=G}.

Using \eqref{eq:Gk}--\eqref{eq:Godd}, we see that if $2\mid (\alpha\mu-l)$, then
\begin{align}
 \label{eq:parity0}
&G_{q}(w,u,0;\alpha(\mu+2\beta)-l,u-1,\alpha(\mu+\beta)-l) \\
=&q^{2u(\alpha(\mu+2\beta)-l)}G_{q} \Big(w+\alpha(\mu+2\beta)-l,u,0;0,u-1-\frac{\alpha(\mu+2\beta)-l}{2},\frac{\alpha\mu-l}{2} \Big).
\notag
\end{align}
Similarly, if $2\nmid (\alpha\mu-l)$, then
\begin{align}
  \label{eq:parity1}
&G_{q}(w,u,0;\alpha(\mu+2\beta)-l,u-1,\alpha(\mu+\beta)-l)
\\
=&q^{2u(\alpha(\mu+2\beta)-l)}G_{q} \Big(w+\alpha(\mu+2\beta)-l,u,0;0,\frac{\alpha\mu-l-1}{2},u-1-\frac{\alpha(\mu+2\beta)-l-1}{2} \Big). \notag
\end{align}

We now proceed by separating into 2 cases, depending on the parity of $(\alpha\mu-l)$.
We shall give the details below when $2\mid(\alpha\mu-l)$ using \eqref{eq:parity0};
the other case is entirely similar using \eqref{eq:parity1} and will be skipped.

\vspace{2mm}

Assume $2\mid(\alpha\mu-l)$ from now on. Then by \eqref{eq:parity0} and Lemma~\ref{lem:GH00},
\begin{align}
G_{q}(w,u,0;&\alpha(\mu+2\beta)-l,u-1,\alpha(\mu+\beta)-l)
\label{eq:GH}
\\
=& H(u,u-1-\frac{\alpha(\mu+2\beta)-l}{2},\frac{\alpha\mu-l}{2})
\notag
\\
=&\sum_{\stackrel{c,e\geq0}{c+e=u}}q^{2c+2c(u-1- \frac{\alpha(\mu+2\beta)-l}{2})+2e(\frac{\alpha\mu-l}{2})}\qbinom{u-1-\frac{\alpha(\mu+2\beta)-l}{2}}{c}_{q^2}\qbinom{\frac{\alpha\mu-l}{2}}{e}_{q^2}.
\notag
\end{align}
Since by assumption $l\leq \alpha\mu \leq 2u+l-1-2\alpha\beta$, we have $u-1-\frac{\alpha(\mu+2\beta)-l}{2}\geq0$, and $\frac{\alpha\mu-l}{2}\geq0$. Note that
\[
\Big( u-1-\frac{\alpha(\mu+2\beta)-l}{2} \Big) + \frac{\alpha\mu-l}{2}= u-1-2\alpha\beta<u=c+e.
\]
Then one of the $q^2$-binomials in each summand of the RHS of \eqref{eq:GH} must vanish, and hence
$G_{q}(w,u,0;\alpha(\mu+2\beta)-l,u-1,\alpha(\mu+\beta)-l)=0$.

This implies by \eqref{eq:gT=G} that $T(w,u,l,\mu,\beta)=0$ if $l\leq \alpha\mu \leq 2u+l-1-2\alpha\beta$.
\end{proof}

\subsection{Completing the proof of the identity \eqref{eq:serre11F}}
\label{subsec:evev}

Combining \eqref{eq:evev3jfix} and Propositions~ \ref{prop:gHserre}--\ref{prop:gT=0}, we conclude that
\begin{align}
 \label{eq:dot0}
\sum_{r=0}^{1-a_{12}(\mu+\nu+2\beta)}
    (-1)^r B_{1,\ev}^{(r)}E_2^\mu K_2^{-(\mu+2\beta)}F_2^\nu B_{1,\ev}^{(1-a_{12}(\mu+\nu+2\beta)-r)}\onestar_{2\la} =0
\end{align}
for any $\la\in\Z$ and $\mu,\nu, \beta \in\Z_{\geq0}$ such that $\mu+\nu+2\beta=n$.
The identity \eqref{eq:serre11F} follows from this by Remark \ref{rem:u=0}.

\section{Serre-Lusztig relations of minimal degree, II}
   \label{sec:minimalSL2}

In this section, we shall establish the analogue of Theorem~\ref{thm:minLS} on the Serre-Lusztig relations of minimal degree in the remaining two cases for $j \in \Iw$, cf. \S\ref{subsec:main}, (ii)  $\tau j \neq j$,  and (iii) $\tau j = j \neq \wb j$. Then we complete the proofs of Theorems~{\bf A} and {\bf B}.

\subsection{The case when $\tau j\neq j \in \Iw$}

\begin{lem}
  \label{lem:span2}
Assume $\tau j\neq j \in \Iw$. For each $n\ge 0$, $B_j^n$ lies in the $\K(q)$-span of
\begin{align*}
\{E_{\tau j}^\mu(\tK'_{j})^{\mu} F_{j}^{\nu}\mid \mu+\nu=n\}.
\end{align*}
\end{lem}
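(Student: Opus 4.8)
The plan is to imitate the proof of Lemma~\ref{lem:span}, the new feature being that when $\tau j\neq j$ the two summands of $B_j$ \emph{$q$-commute}, which eliminates the central correction terms present in the $\tau j=j$ case. In the present situation the defining formula reads $B_j = F_j + E_{\tau j}\tK'_j$, the operator $\T_{\wb}$ acting as the identity on $E_{\tau j}$ here; I would set $A:=E_{\tau j}\tK'_j$ and $B:=F_j$, so that $B_j=A+B$, and reduce everything to the structure of the subalgebra generated by $A$ and $B$.

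The key step is the single quasi-commutation relation $AB=q_j^2\,BA$. Indeed, since $\tau j\neq j$, relation \eqref{Q4} gives $[E_{\tau j},F_j]=0$, while \eqref{eq:K2} together with $\langle h_j,\alpha_j\rangle=2$ and $\tK'_j=(K'_{h_j})^{\epsilon_j}$ gives $\tK'_jF_j=q_j^2\,F_j\tK'_j$; combining these,
\[
AB=E_{\tau j}\tK'_jF_j=q_j^2\,E_{\tau j}F_j\tK'_j=q_j^2\,F_jE_{\tau j}\tK'_j=q_j^2\,BA.
\]
From this I would invoke the $q$-binomial theorem to expand
\[
B_j^n=(A+B)^n=\sum_{\mu+\nu=n}\gamma_{\mu,\nu}\,A^\mu B^\nu,\qquad \gamma_{\mu,\nu}\in\Q(q),
\]
the coefficients $\gamma_{\mu,\nu}$ being the $q$-binomials determined by $AB=q_j^2BA$. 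Finally, \eqref{eq:K2} also yields $\tK'_jE_{\tau j}=q^{-\epsilon_j a_{j,\tau j}}E_{\tau j}\tK'_j$, a scalar, so each $A^\mu=(E_{\tau j}\tK'_j)^\mu=q^{c_\mu}E_{\tau j}^\mu(\tK'_j)^\mu$ for an explicit $c_\mu\in\Z$. Substituting back expresses $B_j^n$ as a $\Q(q)$-linear combination of the monomials $E_{\tau j}^\mu(\tK'_j)^\mu F_j^\nu$ with $\mu+\nu=n$, which is the claim. As in Lemma~\ref{lem:span}, one may equally run a one-line induction on $n$ via $B_j^n=B_j^{n-1}B_j$ using the same two commutation relations.

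This is a routine computation once the quasi-commutation is observed; the only genuine points requiring care are the reduction $\T_{\wb}(E_{\tau j})=E_{\tau j}$ (so that the monomials literally involve $E_{\tau j}$ rather than $\T_{\wb}(E_{\tau j})$) and the bookkeeping of the scalars $c_\mu$ in the normal-ordering step. I would stress the structural contrast with Lemma~\ref{lem:span}: no central factors $(\tK_j\tK'_j)^{\beta-k}$ occur here, precisely because $E_{\tau j}$ commutes with $F_j$ rather than satisfying the $\mathfrak{sl}_2$-type commutator that produces such terms when $\tau j=j$.
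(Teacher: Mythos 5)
Your argument is correct and is essentially the paper's own proof: the authors likewise observe the quasi-commutation $F_j\,(E_{\tau j}\tK'_j)=q_j^{-2}(E_{\tau j}\tK'_j)F_j$, deduce that $B_j^n$ lies in the span of $(E_{\tau j}\tK'_j)^\mu F_j^\nu$ with $\mu+\nu=n$, and then use $(E_{\tau j}\tK'_j)^\mu\in q^{\Z}E_{\tau j}^\mu(\tK'_j)^\mu$ to conclude. Your derivation of the quasi-commutation from $[E_{\tau j},F_j]=0$ and relation \eqref{eq:K2}, and your remark that no central correction terms arise (in contrast to Lemma~\ref{lem:span}), match the paper's reasoning exactly.
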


\begin{proof}
Recall $B_j =F_j +E_{\tau j} \tK'_j$. Note $F_j  (E_{\tau j} \tK'_j) = q_j^{-2}(E_{\tau j} \tK'_j ) F_j.$ Hence $B_j^n$ lies in the $\K(q)$-linear span of $\{(E_{\tau j} \tK'_{j})^{\mu} F_{j}^{\nu}\mid \mu+\nu=n\}$. As $(E_{\tau j} \tK'_{j})^{\mu} \in q^\Z E_{\tau j}^\mu(\tK'_{j})^{\mu}$, the lemma follows.
\end{proof}

\begin{prop}
\label{prop:jnotfixed}
Let $i, j\in \Iw$ such that $\tau i=i= \wb i$ and $\tau j \neq j$. Then, for any $n>0$ and $\ov{p} \in \Z_2$, we have
\begin{align*}
\sum_{r +s =1- na_{ij}} (-1)^r B^{(r)}_{i,\ov{p}} B_j^n B_{i,\ov{p}+\ov{na_{ij}}}^{(s)}=0.
\end{align*}
\end{prop}

\begin{proof}
It suffices to show that $\sum_{r +s =1- na_{ij}} (-1)^r B^{(r)}_{i,\ov{p}} E_{\tau j}^\mu(\tK'_{j})^{\mu}F_{j}^{\nu} B_{i,\ov{p}+\ov{na_{ij}}}^{(s)}=0$ in $\tU$ thanks to Lemma~\ref{lem:span2}, which is equivalent to showing
\begin{align}
  \label{eq:Serrej2}
\sum_{r +s =1- na_{ij}} (-1)^r B^{(r)}_{i,\ov{p}} E_{\tau j}^\mu K_{j}^{-\mu} F_{j}^{\nu} B_{i,\ov{p}+\ov{na_{ij}}}^{(s)}=0 \in \U.
\end{align}
Note that $a_{ij}=a_{i,\tau j}$. The proof of \eqref{eq:Serrej2} is exactly the same as for the special case of \eqref{eq:UiS1} in Theorem~\ref{thm:evenjfixed} with $\beta=0$, and hence omitted here.
\end{proof}

\subsection{The case when $\tau j= j \neq \wb j$}

For $j\in \Iw$, define
\begin{align}  \label{eq:Zj}
Z_j &= \frac{1}{q_j^{-1} -q_j}  \  r_j \big(\T_{w_{\bullet}} (E_j) \big) \in \U_{\Iblack}^+,
\end{align}
where $r_j$ is as in \eqref{eq:rr}. Recall that $B_j =F_j + \T_{\wb}(E_j) \tK'_j$.

The following are variants of \cite[Lemma 5.4]{BW18c} and \cite[Lemma 5.5]{BW18c} in the setting of $\tU$ and $\tUi$, where the scalar $\vs_j$ is replaced by the central element $\tK_j \tK'_j$.

\begin{lem}
  \label{lem:BWc}
We have
\begin{enumerate}
\item
$[F_j,   \T_{w_{\bullet}} (E_j)] = Z_j \tK_j$;
\item
$Z_j$ commutes with $F_j$,  $\T_{w_{\bullet}} (E_j)  \tK'_j$, and $B_j$;
\item
$F_j (\T_{w_{\bullet}} (E_j)  \tK'_j) -q_j^{-2} (\T_{w_{\bullet}} (E_j)  \tK'_j) F_j  = Z_j \tK_j \tK'_j.$
\end{enumerate}
\end{lem}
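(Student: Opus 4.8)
The plan is to deduce all three statements from a single commutation identity in the Drinfeld double $\tU$, following the proofs of \cite[Lemmas 5.4--5.5]{BW18c} but now carrying the central element $\tK_j\tK_j'$ in place of the scalar $\vs_j$. The basic tool is Lusztig's commutator formula between $F_j$ and a positive root vector (in the $\tU$ normalization): for $x\in\U^+_\mu$,
\begin{equation*}
F_j x - x F_j = \frac{{}_jr(x)\,\tK_j' - r_j(x)\,\tK_j}{q_j-q_j^{-1}},
\end{equation*}
where $r_j$ is the skew-derivation of \eqref{eq:rr} and ${}_jr$ is its left-handed companion. First I would apply this with $x=\T_{w_\bullet}(E_j)$. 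Since $w_\bullet\in W_{\Iblack}$ leaves the $\alpha_j$-coefficient of any weight unchanged, the weight $w_\bullet(\alpha_j)$ has $\alpha_j$-coefficient $1$; hence $r_j(\T_{w_\bullet}(E_j))$ has weight $w_\bullet(\alpha_j)-\alpha_j\in\Z\Iblack$, so it lands in $\U^+_{\Iblack}$ and equals $(q_j^{-1}-q_j)Z_j$ by \eqref{eq:Zj}. The crucial input is that ${}_jr(\T_{w_\bullet}(E_j))=0$; granting this, the formula collapses to $[F_j,\T_{w_\bullet}(E_j)]=Z_j\tK_j$, which is (1).

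Statement (3) I would then read off from (1) by a short rearrangement. Using the defining relation $\tK_j'F_j=q_j^2\,F_j\tK_j'$ of $\tU$, one has $q_j^{-2}(\T_{w_\bullet}(E_j)\tK_j')F_j=\T_{w_\bullet}(E_j)F_j\tK_j'$, so
\begin{equation*}
F_j\big(\T_{w_\bullet}(E_j)\tK_j'\big)-q_j^{-2}\big(\T_{w_\bullet}(E_j)\tK_j'\big)F_j
=\big[F_j,\T_{w_\bullet}(E_j)\big]\tK_j'=Z_j\tK_j\tK_j',
\end{equation*}
which is (3); the factor $q_j^{-2}$ is exactly what is needed to turn the naive commutator into $[F_j,\T_{w_\bullet}(E_j)]\tK_j'$.

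For (2) I would separate the three assertions. That $Z_j$ commutes with $F_j$ is immediate from support: $Z_j\in\U^+_{\Iblack}$ is a polynomial in the $E_\ell$ with $\ell\in\Iblack$, and $[E_\ell,F_j]=0$ for $\ell\neq j$. Commuting with $\T_{w_\bullet}(E_j)\tK_j'$ (and hence, together with the $F_j$ case, with $B_j=F_j+\T_{w_\bullet}(E_j)\tK_j'$) is the substantive point: I expect a $q$-commutation $Z_j\,\T_{w_\bullet}(E_j)=q_j^{c}\,\T_{w_\bullet}(E_j)\,Z_j$ coming from the (higher) Serre relations among $E_j$ and the $E_\ell$, $\ell\in\Iblack$, with the exponent $c$ dictated by the weight $w_\bullet(\alpha_j)-\alpha_j$; the intertwining scalar produced by pushing $\tK_j'$ past $Z_j$ then cancels $q_j^{c}$ exactly, so the product $\T_{w_\bullet}(E_j)\tK_j'$ centralizes $Z_j$. (In the rank-$2$ reduction this $q$-commutation is literally the $q$-Serre relation.)

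The main obstacle is the two structural inputs feeding the above: the vanishing ${}_jr(\T_{w_\bullet}(E_j))=0$ and the $q$-commutation of $Z_j$ with $\T_{w_\bullet}(E_j)$. Both are facts about the braid operator $\T_{w_\bullet}$ and the skew-derivations on $\U^+$, and I would deduce them from Lusztig's theory (the interplay of $\T_w$ with $r_j,{}_jr$ for reduced words in $W_{\Iblack}$ avoiding $j$) exactly as in the $\Ui$ setting of \cite{BW18c}; the only genuinely new bookkeeping in passing to $\tU$ is tracking the $\tK_j$ versus $\tK_j'$ factors and checking that the central element $\tK_j\tK_j'$ plays the role of the scalar $\vs_j$. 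A final sanity check is that the central reduction $\tK_j\tK_j'\mapsto\vs_j$ recovers \cite[Lemmas 5.4--5.5]{BW18c} verbatim.
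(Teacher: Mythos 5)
The paper offers no proof of this lemma beyond the remark that it is the variant of \cite[Lemmas 5.4 and 5.5]{BW18c} obtained by replacing the scalar $\vs_j$ with the central element $\tK_j\tK_j'$, and your proposal is a faithful expansion of exactly that argument: Lusztig's commutator formula together with the weight computation gives (1), the relation $\tK_j'F_j=q_j^2F_j\tK_j'$ converts (1) into (3), and (2) reduces to the support of $Z_j\in\U^+_{\Iblack}$ plus the $q$-commutation with $\T_{w_\bullet}(E_j)$. The two structural inputs you defer to Lusztig's theory and to \cite{BW18c} (the vanishing of ${}_jr(\T_{w_\bullet}(E_j))$ and the $q$-commutation of $Z_j$ past $\T_{w_\bullet}(E_j)$) are precisely what the cited lemmas supply, so your argument is the same as, and no less complete than, the paper's.
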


The following lemma follows readily from Lemma~\ref{lem:BWc}(2)(3).
\begin{lem}
  \label{lem:span3}
For each $n\ge 0$ and $j\in \Iw$, $B_j^n$ lies in the $\K(q)$-span of
\begin{align*}
\{ (\T_{\wb}E_j)^\mu (\tK'_j)^{\mu} F_j^\nu (Z_j \tK_j \tK'_j)^{\beta}   \mid n=\mu+\nu+2\beta \}.
\end{align*}
\end{lem}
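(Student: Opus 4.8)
The plan is to reduce the statement to a single $q$-commutation identity in the subalgebra generated by $F_j$ and $\T_{\wb}(E_j)\tK'_j$. To streamline the bookkeeping I would abbreviate $Y := \T_{\wb}(E_j)\tK'_j$ and $C := Z_j\tK_j\tK'_j$, so that $B_j = F_j + Y$. Then Lemma~\ref{lem:BWc}(3) reads $F_j Y - q_j^{-2} Y F_j = C$, while Lemma~\ref{lem:BWc}(2) together with the centrality of $\tK_j\tK'_j$ in $\tU$ shows that $C$ is central: it commutes with both $F_j$ and $Y$. Since $\T_{\wb}(E_j)$ is a weight vector, $\tK'_j$ commutes with it up to a power of $q$, so $Y^\mu \in q^{\Z}\,(\T_{\wb}E_j)^\mu(\tK'_j)^\mu$, while $C^\beta = (Z_j\tK_j\tK'_j)^\beta$. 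Consequently the asserted spanning set and the set $\{Y^\mu F_j^\nu C^\beta \mid n = \mu+\nu+2\beta\}$ differ only by nonzero scalar multiples term by term, so it suffices to prove $B_j^n \in \Q(q)\text{-span}\{Y^\mu F_j^\nu C^\beta \mid \mu+\nu+2\beta = n\}$.

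The key lemma I would isolate first is the $q$-commutation formula
\[
F_j^\nu\, Y = q_j^{-2\nu}\, Y F_j^\nu + \Big(\sum_{k=0}^{\nu-1} q_j^{-2k}\Big)\, C\, F_j^{\nu-1},
\qquad \nu \ge 1,
\]
proved by an immediate induction on $\nu$ from the defining relation $F_jY = q_j^{-2}YF_j + C$ and the centrality of $C$. With this in hand I would prove the main claim by induction on $n$, the case $n=0$ being trivial. Assuming $B_j^n$ is a combination of ordered monomials $Y^\mu F_j^\nu C^\beta$ with $\mu+\nu+2\beta=n$, I would right-multiply each such monomial by $B_j = F_j + Y$. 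The $F_j$-term gives $Y^\mu F_j^{\nu+1} C^\beta$, of degree $n+1$; for the $Y$-term I use centrality of $C$ to write $Y^\mu F_j^\nu C^\beta Y = Y^\mu (F_j^\nu Y) C^\beta$ and then apply the commutation formula to obtain
\[
q_j^{-2\nu}\, Y^{\mu+1} F_j^\nu C^\beta + \Big(\sum_{k=0}^{\nu-1} q_j^{-2k}\Big)\, Y^\mu F_j^{\nu-1} C^{\beta+1},
\]
whose two summands have indices $(\mu+1,\nu,\beta)$ and $(\mu,\nu-1,\beta+1)$, both of total weight $\mu+\nu+2\beta+1 = n+1$. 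This closes the induction.

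This argument is entirely routine, which matches the assertion that the lemma "follows readily." The only point requiring a word of care is the translation between the ordered monomials $Y^\mu F_j^\nu C^\beta$ and the monomials $(\T_{\wb}E_j)^\mu(\tK'_j)^\mu F_j^\nu (Z_j\tK_j\tK'_j)^\beta$ actually appearing in the statement, which rests on $\T_{\wb}(E_j)$ being a weight vector so that moving the $\tK'_j$'s across it produces only invertible scalars. I anticipate no genuine obstacle: the computation is a standard quantum-Weyl-algebra normal ordering, with the relation of Lemma~\ref{lem:BWc}(3) playing the role of the Heisenberg commutator and $C$ the role of the central element.
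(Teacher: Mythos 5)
Your proof is correct and is exactly the argument the paper intends: the paper merely states that the lemma ``follows readily from Lemma~\ref{lem:BWc}(2)(3),'' and your normal-ordering induction using the relation $F_jY-q_j^{-2}YF_j=C$ with $C=Z_j\tK_j\tK'_j$ central is precisely that routine verification, including the correct bookkeeping of the weight $\mu+\nu+2\beta$ and the harmless $q$-power discrepancy between $Y^\mu$ and $(\T_{\wb}E_j)^\mu(\tK'_j)^\mu$.
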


\begin{prop}
\label{prop:jfixed2}
Let $i, j\in \Iw$ such that $\tau i=i= \wb i$ and $\tau j = j \neq \wb j$. Then, for any $n>0$ and $\ov{p} \in \Z_2$, we have
\begin{align*}
\sum_{r +s =1- na_{ij}} (-1)^r B^{(r)}_{i,\ov{p}} B_j^n B_{i,\ov{p}+\ov{na_{ij}}}^{(s)}=0.
\end{align*}
\end{prop}

\begin{proof}
Note that $Z_j \tK_j \tK'_j$ commutes with $\imath$divided powers of $B_i$.
Hence, by Lemma~\ref{lem:span3}, it suffices to show that $\sum_{r +s =1- na_{ij}} (-1)^r B^{(r)}_{i,\ov{p}} (\T_{\wb}E_j)^\mu(\tK'_{j})^{\mu}F_{j}^{\nu} B_{i,\ov{p}+\ov{na_{ij}}}^{(s)}=0$ in $\tU$, for $\mu, \nu$ such that $n-\mu -\nu \in 2\Z_{\ge 0}$, which is equivalent to showing
\begin{align}
  \label{eq:Serrej3}
\sum_{r +s =1- na_{ij}} (-1)^r B^{(r)}_{i,\ov{p}} (\T_{\wb}E_j)^\mu K_{j}^{-\mu}F_{j}^{\nu} B_{i,\ov{p}+\ov{na_{ij}}}^{(s)}=0 \in \U.
\end{align}
By Proposition~\ref{prop:SLinduct}, this further reduces to establishing the identity \eqref{eq:Serrej3} for $\mu+\nu=n$.

The proof of the identity \eqref{eq:Serrej3} for $\mu+\nu=n$ is essentially the same as for the special case of \eqref{eq:UiS1} in Theorem~\ref{thm:evenjfixed} with $\beta=0$, and hence omitted. We only remark that $\T_{\wb}(E_i) =E_i$ and thus a Serre-Lusztig relation in $\U^+$ between $E_i$ and $E_j$ gives rise to a same Serre-Lusztig relation in $\U^+$ between $E_i$ and $\T_{\wb} (E_j)$; compare \eqref{eq:gHigherE} and its proof.
\end{proof}

The $\imath$divided powers $B_j^{(n)}$ in $\Ui$, for $j\in \Iw$ such that $\tau j = j \neq \wb j$, were defined in \cite[(5.12)]{BW18c} using $b_j^{(n)}$ in \cite[(5.7)]{BW18c}. In $\tUi$, we modify the definitions to be, for $n\ge 0$,
\begin{align}
b_j^{(n)} &= \sum_{a=0}^n  q_j^{-a(n-a)} ( \T_{w_{\bullet}} (E_j)  \tK'_j)^{(a)}F_j^{(n-a)},
  \label{eq:bn}
\\
B_j^{(n)} &= {b}_j^{(n)}  + \frac{q}{q -q^{-1}} \sum_{k \ge 1}  q_j^{\frac{k(k+1)}{2}} (Z_j \tK_j \tK'_j)^{(k)} b_j^{(n-2k)}.
  \label{eq:Bb}
\end{align}
The following recursive formula holds, for $n\ge 2$ (cf. \cite[(5.8)]{BW18c}):
\begin{align} \label{eq:bbb}
[n]_j b_j^{(n)} = b_j^{(n-1)} B_j - q_j^{2-n} (Z_j \tK_j \tK'_j) b_j^{(n-2)}.
\end{align}

\begin{lem}
  \label{lem:span4}
Let $j\in \Iw$ be such that $\tau j=j \neq \wb j$. Then
  \begin{enumerate}
\item
$B_j^{(n)}$ lies in the $\K(q)$-span of
$ \{ B_j^{n-2\beta} (Z_j \tK_j \tK'_j)^\beta   \mid  0\le \beta \le \lfloor n/2 \rfloor \};$
\item
$B_j^n$ lies in the $\K(q)$-span of
$\{ B_j^{(n-2\beta)} (Z_j \tK_j \tK'_j)^\beta   \mid  0\le \beta \le \lfloor n/2 \rfloor \}.$
\end{enumerate}
\end{lem}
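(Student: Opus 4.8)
The plan is to mirror the two-step proof of Lemma~\ref{lem:span0}: establish (1) by tracking leading terms through the recursion \eqref{eq:bbb} and the definition \eqref{eq:Bb}, and then deduce (2) by inverting a triangular transition matrix. Throughout I abbreviate $W:=Z_j \tK_j \tK'_j$; the one structural input I will use repeatedly is that $W$ commutes with $B_j$, since $\tK_j \tK'_j$ is central in $\tU$ and $Z_j$ commutes with $B_j$ by Lemma~\ref{lem:BWc}(2). In particular each monomial $B_j^a W^b$ is unambiguous, and left multiplication by $W$ sends $B_j^a W^b \mapsto B_j^a W^{b+1}$.

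For (1), I would first prove by induction on $n$, using \eqref{eq:bbb} and the base cases $b_j^{(0)}=1$, $b_j^{(1)}=B_j$ (both read off from \eqref{eq:bn}), that
\[
b_j^{(n)}=\tfrac{1}{[n]_j!}\,B_j^{n}+\sum_{\beta=1}^{\lfloor n/2\rfloor}(\text{scalar})\,B_j^{\,n-2\beta}W^{\beta}.
\]
In the inductive step, \eqref{eq:bbb} presents $[n]_j b_j^{(n)}$ as $b_j^{(n-1)}B_j-q_j^{2-n}W b_j^{(n-2)}$; by the induction hypothesis and $[W,B_j]=0$, the first summand lies in the span of the $B_j^{\,n-2\beta}W^\beta$ with top term $\tfrac{1}{[n-1]_j!}B_j^n$, while the second carries an extra factor $W$ and hence only feeds the terms with $\beta\ge 1$. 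Dividing by $[n]_j$ yields the displayed form with top coefficient $1/[n]_j!$. Then \eqref{eq:Bb} writes $B_j^{(n)}=b_j^{(n)}+\tfrac{q}{q-q^{-1}}\sum_{k\ge 1}q_j^{k(k+1)/2}(Z_j\tK_j\tK'_j)^{(k)}b_j^{(n-2k)}$; since $(Z_j\tK_j\tK'_j)^{(k)}$ is a scalar multiple of $W^k$, each correction term lies in the span of $\{B_j^{\,n-2(k+\beta)}W^{k+\beta}\}$ and contributes nothing to the $B_j^n$ term. This proves (1) and shows the coefficient of $B_j^n$ in $B_j^{(n)}$ equals $1/[n]_j!\neq 0$.

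For (2), I would apply (1) to $B_j^{(n-2\beta)}$ and multiply on the right by $W^\beta$, obtaining
\[
B_j^{(n-2\beta)}W^{\beta}=\tfrac{1}{[n-2\beta]_j!}\,B_j^{\,n-2\beta}W^{\beta}+\sum_{\gamma>\beta}(\text{scalar})\,B_j^{\,n-2\gamma}W^{\gamma}.
\]
Ordering the set $\{B_j^{\,n-2\beta}W^\beta\}_{0\le\beta\le\lfloor n/2\rfloor}$ by increasing $\beta$, this says that the matrix expressing $\{B_j^{(n-2\beta)}W^\beta\}$ in terms of $\{B_j^{\,n-2\beta}W^\beta\}$ is triangular with nonzero diagonal entries $1/[n-2\beta]_j!$. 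Inverting this matrix over $\Q(q)$ expresses each $B_j^{\,n-2\beta}W^\beta$ --- and in particular $B_j^{n}$ (the case $\beta=0$) --- as a $\Q(q)$-linear combination of the $B_j^{(n-2\gamma)}W^\gamma$, which is exactly (2). This formal inversion uses only triangularity and nonvanishing of the diagonal, so no a priori linear independence of either set is needed.

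The computations are routine; the only delicate point is the bookkeeping of the top ($W^0$) term through \eqref{eq:bbb} and \eqref{eq:Bb}, which is precisely what makes the transition matrix in (2) invertible. The commutativity $[W,B_j]=0$ from Lemma~\ref{lem:BWc}(2), together with the centrality of $\tK_j\tK'_j$, is what guarantees the monomials $B_j^{\,n-2\beta}W^\beta$ are well defined and closed under the operations appearing in \eqref{eq:bbb} and \eqref{eq:Bb}.
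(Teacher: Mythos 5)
Your proposal is correct and follows essentially the same route as the paper: part (1) from the definitions \eqref{eq:bn}--\eqref{eq:Bb} and the recursion \eqref{eq:bbb}, and part (2) by observing that the two families $\{ B_j^{n-2\beta} (Z_j \tK_j \tK'_j)^\beta\}$ and $\{ B_j^{(n-2\beta)} (Z_j \tK_j \tK'_j)^\beta\}$ are related by an invertible (triangular) change of basis. The paper states this very tersely (``the two sets span the same vector space $V_n$ and are bases of it''); your explicit leading-term bookkeeping and triangular-matrix inversion is exactly the justification that claim implicitly relies on.
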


\begin{proof}
Part (1) follows from the definition of $B_{j}^{(n)}$ \eqref{eq:bn}--\eqref{eq:bbb}.
Then, the sets
\[
\{ B_j^{n-2\beta} (Z_j \tK_j \tK'_j)^\beta   \mid  0\le \beta \le \lfloor n/2 \rfloor \}
\text{ and }
\{ B_j^{(n-2\beta)} (Z_j \tK_j \tK'_j)^\beta   \mid  0\le \beta \le \lfloor n/2 \rfloor \}
\]
 span the same vector space $V_n$, and moreover, they are bases of $V_n$. Now Part (2) follows.
\end{proof}

\subsection{Equivalence of \eqref{eq:SerreBn12} and \eqref{eq:SerreBn10}}

We are back to the general setting, and there is no condition on $j$ below.

\begin{prop}
\label{prop:eq:SL-P-iDP}
Suppose $\tau i =\wb i = i \in \Iw$.
Then the identities \eqref{eq:SerreBn12} and \eqref{eq:SerreBn10} are equivalent.
\end{prop}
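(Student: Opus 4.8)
The plan is to prove both implications as \emph{conditional} statements (so that neither \eqref{eq:SerreBn12} nor \eqref{eq:SerreBn10} is assumed globally), reducing everything to a single mechanism: the unconditional ``change of spanning set'' of the space $V_n$ furnished by Lemmas~\ref{lem:span0} and \ref{lem:span4}, combined with the conditional length-raising Proposition~\ref{prop:SLinduct}. Since there is no hypothesis on $j$, I would split according to the three types in \S\ref{subsec:main}. When $\tau j\neq j$ (case (ii)) the $\imath$divided power is the ordinary divided power, $B_{j,\ov t}^{(n)}=B_j^n/[n]_j^!$, so the left-hand sides of \eqref{eq:SerreBn12} and \eqref{eq:SerreBn10} differ only by the nonzero scalar $[n]_j^!$ and the equivalence is immediate. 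For cases (i) and (iii) let $\zeta_j$ denote the relevant distinguished element, $\zeta_j=\tk_j$ in case (i) and $\zeta_j=Z_j\tK_j\tK'_j$ in case (iii); in both cases $\zeta_j$ commutes with the $\imath$divided powers $B^{(r)}_{i,\ov p}$ ($\tk_j$ is central in $\tUi$ by \S\ref{subsec:iQG}, and the commuting property of $Z_j\tK_j\tK'_j$ is recorded in the proof of Proposition~\ref{prop:jfixed2}), so $\zeta_j$ can be pulled freely out of any Serre sum.

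For the forward direction ($\eqref{eq:SerreBn12}\Rightarrow\eqref{eq:SerreBn10}$) I would assume \eqref{eq:SerreBn12} for all $n$ and first promote it to the non-standard relations \eqref{eq:nonstd}. Indeed, for each $k$ the relation \eqref{eq:SerreBn12} for exponent $n-2k$ has length $1-(n-2k)a_{ij}$, and since $a_{ij}\le 0$ this is $\le 1-na_{ij}$; applying Proposition~\ref{prop:SLinduct} exactly $-ka_{ij}\ge 0$ times (with $X=B_j^{n-2k}$) raises its length to $1-na_{ij}$, giving $\sum_{r+s=1-na_{ij}}(-1)^r B^{(r)}_{i,\ov p}B_j^{n-2k}B^{(s)}_{i,\ov p+\ov{na_{ij}}}=0$, which is \eqref{eq:nonstd}. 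The parity hypothesis of Proposition~\ref{prop:SLinduct} holds since $1-ma_{ij}\not\equiv ma_{ij}\pmod 2$ for every $m$, and $\ov{(n-2k)a_{ij}}=\ov{na_{ij}}$ keeps the right index parity at $\ov p+\ov{na_{ij}}$. Now I expand $B_{j,\ov t}^{(n)}=\sum_{k}c_k\,B_j^{n-2k}\zeta_j^{k}$ by Lemma~\ref{lem:span0}(1) (resp. Lemma~\ref{lem:span4}(1)), substitute into the left-hand side of \eqref{eq:SerreBn10}, pull each central $\zeta_j^{k}$ out of the Serre sum, and apply \eqref{eq:nonstd} to kill every term; this yields \eqref{eq:SerreBn10} for each parity $\ov t$.

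The reverse direction ($\eqref{eq:SerreBn10}\Rightarrow\eqref{eq:SerreBn12}$) is symmetric, with the two spanning sets interchanged. Assuming \eqref{eq:SerreBn10} for all $n$ and both parities, Proposition~\ref{prop:SLinduct} applied with $X=B^{(m)}_{j,\ov k}$ raises the minimal relation of exponent $m$ to every larger length of the correct parity, producing the extended relation \eqref{eq:Serre1t}. Then I expand $B_j^n=\sum_{k}d_k\,B_{j,\ov k}^{(n-2k)}\zeta_j^{k}$ by Lemma~\ref{lem:span0}(2) (resp. Lemma~\ref{lem:span4}(2)), substitute into the left-hand side of \eqref{eq:SerreBn12}, and observe that the $k$-th Serre sum $\sum_{r+s=1-na_{ij}}(-1)^r B^{(r)}_{i,\ov p}B_{j,\ov k}^{(n-2k)}B^{(s)}_{i,\ov p+\ov{na_{ij}}}$ is exactly the extended relation \eqref{eq:Serre1t} for exponent $n-2k$ read at length $1-na_{ij}=1-(n-2k)a_{ij}+2(-ka_{ij})$, hence vanishes; pulling out the central $\zeta_j^{k}$ gives \eqref{eq:SerreBn12}.

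I expect the only genuine subtlety, and the point most in need of care, to be the index bookkeeping rather than any hard computation: one must verify that each chain of applications of Proposition~\ref{prop:SLinduct} lands on \emph{exactly} the length $1-na_{ij}$ (which is where $a_{ij}\le 0$, so that $-ka_{ij}\ge 0$, is used), that the parity condition of Proposition~\ref{prop:SLinduct} is met at every step, and that the right-hand $\imath$divided power of $B_i$ keeps parity $\ov p+\ov{na_{ij}}$ throughout. Once the triangularity of the base change in Lemmas~\ref{lem:span0} and \ref{lem:span4} (nonzero leading coefficient, central lower-order corrections) is in hand, the algebra in both directions is routine.
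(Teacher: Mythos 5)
Your proposal is correct, and the forward implication is essentially the paper's own argument verbatim: apply \eqref{eq:SerreBn12} at exponents $n-2t$, raise each relation to length $1-na_{ij}$ by $-ta_{ij}$ applications of Proposition~\ref{prop:SLinduct}, and expand $B_{j,\ov t}^{(n)}$ via Lemma~\ref{lem:span0}(1) (resp.\ Lemma~\ref{lem:span4}(1)), using centrality of $\tk_j$ (resp.\ the commuting property of $Z_j\tK_j\tK_j'$). Where you genuinely diverge is the reverse implication. The paper runs an induction on $n$: it writes $B_j^n$ as $B_{j,\ov t}^{(n)}$ plus lower terms $B_j^{n-2t}\tk_j^t$ ($t\ge 1$), kills the lower terms using the \emph{inductive hypothesis} that \eqref{eq:SerreBn12} already holds at smaller exponents (again raised by Proposition~\ref{prop:SLinduct}), and kills the top term by \eqref{eq:SerreBn10}. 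You instead use the pure expansion $B_j^n=\sum_k d_k\,B_{j,\ov k}^{(n-2k)}\zeta_j^{k}$ of Lemma~\ref{lem:span0}(2)/Lemma~\ref{lem:span4}(2) and dispose of \emph{every} summand directly from the assumed \eqref{eq:SerreBn10} at exponent $n-2k$, raised to length $1-na_{ij}=1-(n-2k)a_{ij}-2ka_{ij}$ by Proposition~\ref{prop:SLinduct}. This removes the induction on $n$ entirely and makes the two directions symmetric; the cost is nil, since both directions rest on exactly the same two ingredients (the conditional length-raising proposition and the triangular base change between $\{B_j^{n-2t}\zeta_j^t\}$ and $\{B_{j,\ov t}^{(n-2t)}\zeta_j^t\}$). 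Your parity bookkeeping ($\ov{(n-2k)a_{ij}}=\ov{na_{ij}}$, and $1-n'a_{ij}\not\equiv n'a_{ij}\pmod 2$ at every step) is exactly the check the paper also relies on, and your dispatch of case (ii) as a scalar rescaling matches the paper.
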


\begin{proof}
Recall $j\in \Iw$.

(i) Assume $\tau j=j =\wb j$.

We can assume that $a_{ij} \neq 0$, as otherwise 
both \eqref{eq:SerreBn12}--\eqref{eq:SerreBn10} are trivial. Recall $\tk_j$ is central.

\underline{ \eqref{eq:SerreBn12} $\Rightarrow$ \eqref{eq:SerreBn10}}. Let us fix $n\ge 0$. For each $0\le t \le \lfloor n/2 \rfloor$, by \eqref{eq:SerreBn12} with $n$ replaced by $n-2t$, we have
\[
\sum_{r+s=1-na_{ij} + 2ta_{ij}} (-1)^r B^{(r)}_{i,\ov{p}} B_j^{n-2t} B_{i,\ov{p}+\ov{na_{ij}}}^{(s)}  =0.
\]
This together with Proposition~\ref{prop:SLinduct} implies that
$
\sum_{r+s=1-na_{ij}} (-1)^r B^{(r)}_{i,\ov{p}} B_j^{n-2t} B_{i,\ov{p}+\ov{na_{ij}}}^{(s)}  =0.
$
Then \eqref{eq:SerreBn10}  follows, since $B_{j,\ov{t}}^{(n)}$ is a linear combination of $B_j^{n-2t} \tk_j^t$, for $0\le t \le \lfloor n/2 \rfloor$ by Lemma~\ref{lem:span0}.

\underline{ \eqref{eq:SerreBn10} $\Rightarrow$ \eqref{eq:SerreBn12}}. We prove \eqref{eq:SerreBn12} by induction on $n$. The identity~\eqref{eq:SerreBn12} holds for $n=0,1$ as it is the same as \eqref{eq:SerreBn10}. By inductive assumption, we have
\[
\sum_{r+s=1-na_{ij} + 2ta_{ij}} (-1)^r B^{(r)}_{i,\ov{p}} B_j^{n-2t} B_{i,\ov{p}+\ov{na_{ij}}}^{(s)}  =0, \quad
\text{for } 1 \le t \le \lfloor n/2 \rfloor.
\]
Then it follows by Proposition~\ref{prop:SLinduct} that
$
\sum_{r+s=1-na_{ij}} (-1)^r B^{(r)}_{i,\ov{p}} B_j^{n-2t} B_{i,\ov{p}+\ov{na_{ij}}}^{(s)}  =0,
$ 
for $1 \le t \le \lfloor n/2 \rfloor$. Then \eqref{eq:SerreBn12}  follows from these identities and \eqref{eq:SerreBn10} since $B_j^n$ is a linear combination of $B_{j,\ov{t}}^{(n)}$ and $B_j^{n-2t} \tk_j^t$, for $1\le t \le \lfloor n/2 \rfloor$, by Lemma~\ref{lem:span0}.

(ii) Assume $\tau j \neq j$. In this case, the equivalence is trivial since $B_{j,\ov{t}}^{(n)} = B_j^n/[n]_i!$.

(iii) Assume $\tau j=j \neq \wb j$. In this case, the proof is the same as for (i) where Lemma~\ref{lem:span4} is used in place of Lemma~\ref{lem:span0}.
\end{proof}

\subsection{A summary}

Theorem~{\bf A} consists of two identities \eqref{eq:SerreBn12}--\eqref{eq:SerreBn10}. The identity  \eqref{eq:SerreBn12} has been established case-by-case: Theorem~\ref{thm:minLS} (for $\tau j = j =\wb j$), Propositions~\ref{prop:jnotfixed} (for $\tau j \neq j$), and Proposition~\ref{prop:jfixed2} (for $\tau j = j \neq \wb j$).

The identity \eqref{eq:SerreBn10} follows from \eqref{eq:SerreBn12} and the equivalence established in Proposition~\ref{prop:eq:SL-P-iDP}.

Theorem~{\bf B} immediately follows from Theorem~{\bf A} and Proposition~\ref{prop:SLinduct}.

\begin{rem}
Theorem~{\bf A} and Theorem~{\bf B} remain valid over $\Ui =\Ui_\bvs$, once we replace $\tk_i$ by the scalar $\vs_i$ in all relevant places and use the version of $\imath$divided powers in \eqref{eq:iDPoddUi}--\eqref{eq:iDPevUi}.
\end{rem}

\section{General Serre-Lusztig relations for $\tUi$}
  \label{sec:SerreL1}

\subsection{Definition of $\tf_{i,j;n,m,\ov{p},\ov{t},e}$ and $\tf_{i,j;n,m,\ov{p},\ov{t},e}'$}

Let $i\neq j\in \Iw$ be such that $\btau i=i=\wb i$.
Recall the $\imath$divided powers $B_{j,\ov{t}}^{(n)}$ below means $B_{j}^{(n)}$ in cases $j$ satisfies (ii) or (iii) in \S\ref{subsec:main}.

For $m\in\Z$, $n\in\Z_{\geq0}$, $e=\pm1$ and $\ov{p},\ov{t} \in \Z_2$, we define elements $\tf_{i,j;n,m,\ov{p},\ov{t},e}$ and $\tf_{i,j;n,m,\ov{p},\ov{t},e}'$ in $\tUi$ below, depending on the parity of $m-na_{ij}$.

If $m-na_{ij}$ is odd, we let
\begin{align}
\label{eq:m-aodd}
\tf_{i,j;n,m,\ov{p},\ov{t},e} &=\sum_{u\geq0}(q_i\tk_i)^{u} \Big\{
\sum_{\stackrel{ r+s+2u=m}{\ov{r}=\ov{p}+\ov{1}}}
(-1)^r q_i^{-e((m+na_{ij})(r+u)-r)}\qbinom{\frac{m+na_{ij}-1}{2}}{u}_{q_i^2}B^{(r)}_{i,\ov{p}}B_{j,\ov{t}}^{(n)}B^{(s)}_{i,\ov{p}+\ov{na_{ij}}}\\ \notag
&\quad
+\sum_{\stackrel{ r+s+2u=m}{\ov{r}=\ov{p}}}
(-1)^{r}q_i^{-e((m+na_{ij}-2)(r+u)+r)}\qbinom{\frac{m+na_{ij}-1}{2}}{u}_{q_i^2}B^{(r)}_{i,\ov{p}}B_{j,\ov{t}}^{(n)}B^{(s)}_{i,\ov{p}+\ov{na_{ij}}}\Big\} ;
\end{align}
if $m-na_{ij}$ is even, then we let
\begin{align}
\label{eq:m-aeven}
\tf_{i,j;n,m,\ov{p},\ov{t},e} &=\sum_{u\geq0}(q_i\tk_i)^{u}
\Big\{
\sum_{\stackrel{ r+s+2u=m}{ \ov{r}=\ov{p} +\ov{1}}}
(-1)^{r} q_i^{-e(m+na_{ij}-1)(r+u)}
\qbinom{\frac{m+na_{ij}}{2}}{u}_{q_i^2}B^{(r)}_{i,\ov{p}}B_{j,\ov{t}}^{(n)}B^{(s)}_{i,\ov{p}+\ov{na_{ij}}}\\
 &\quad +\sum_{\stackrel{ r+s+2u=m} {\ov{r}=\ov{p} }}
(-1)^{r} q_i^{-e(m+na_{ij}-1)(r+u)}
\qbinom{\frac{m+na_{ij}-2}{2}}{u}_{q_i^2}B^{(r)}_{i,\ov{p}}B_{j,\ov{t}}^{(n)}B^{(s)}_{i,\ov{p}+\ov{na_{ij}}}\Big\}. \notag
\end{align}

If $m-na_{ij}$ is odd, we let
\begin{align}
\label{eq:evev1}
\tf_{i,j;n,m,\ov{p},\ov{t},e}'  &= \sum_{u\geq0}(q_i\tk_i)^{u} \Big\{
\sum_{\stackrel{r+s+2u=m}{\ov{r}=\ov{p}+\ov{1}}}
(-1)^{r}q_i^{-e((m+na_{ij})(r+u)-r)}\qbinom{\frac{m+na_{ij}-1}{2}}{u}_{q_i^2}B^{(s)}_{i,\ov{p}}B_{j,\ov{t}}^{(n)}B^{(r)}_{i,\ov{p}+\ov{na_{ij}}}\\ \notag
&
+\sum_{\stackrel{ r+s+2u=m}{\ov{r}=\ov{p}}}
(-1)^{r}q_i^{-e((m+na_{ij}-2)(r+u)+r)}\qbinom{\frac{m+na_{ij}-1}{2}}{u}_{q_i^2}B^{(s)}_{i,\ov{p}}B_{j,\ov{t}}^{(n)}B^{(r)}_{i,\ov{p}+\ov{na_{ij}}}\Big\};
\end{align}
if $m-na_{ij}$ is even, then we let
\begin{align}
\label{eq:evev0}
\tf_{i,j;n,m,\ov{p},\ov{t},e}' &= \sum_{u\geq0}(q_i\tk_i)^{u}\Big\{
\sum_{\stackrel{ r+s+2u=m}{\ov{r}=\ov{p} +\ov{1}}}
(-1)^{r} q_i^{-e(m+na_{ij}-1)(r+u)}
\qbinom{\frac{m+na_{ij}}{2}}{u}_{q_i^2}B^{(s)}_{i,\ov{p}}B_{j,\ov{t}}^{(n)}B^{(r)}_{i,\ov{p}+\ov{na_{ij}}}\\
 &+\sum_{\stackrel{ r+s+2u=m}{ \ov{r}=\ov{p}}}
(-1)^{r} q_i^{-e(m+na_{ij}-1)(r+u)}
\qbinom{\frac{m+na_{ij}-2}{2}}{u}_{q_i^2}B^{(s)}_{i,\ov{p}}B_{j,\ov{t}}^{(n)}B^{(r)}_{i,\ov{p}+\ov{na_{ij}}}\Big\}. \notag
\end{align}

Recall the anti-involution $\sigma_\imath$ of $\tUi$ in Lemma~\ref{lem:isigma}. The next lemma follows by a direct computation.

\begin{lem}
Let $i \neq j\in \Iw$ be such that $\btau i=i=\wb i$. Then, for any $\ov{p}, \ov{t} \in\Z_2$, $n\ge 0$, $m\in \Z$, and $e=\pm1$, we have
\begin{align}
\label{eq:hos split}
\tf_{i,j;n,m,\ov{p},\ov{t},e}' =\sigma_\imath (\tf_{i, \tau j;n,m,\ov{p},\ov{t},e}).
\end{align}
\end{lem}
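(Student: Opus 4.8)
The plan is to apply the anti-involution $\sigma_\imath$ of Lemma~\ref{lem:isigma} directly to the defining formulas \eqref{eq:m-aodd}--\eqref{eq:m-aeven} for $\tf_{i,\tau j;n,m,\ov{p},\ov{t},e}$ and to match the outcome against \eqref{eq:evev1}--\eqref{eq:evev0}. First I would record the three properties of $\sigma_\imath$ that drive the computation: it is $\K(q)$-linear and reverses products; since $\btau i=i$ it fixes both $B_i$ and $\tk_i$, and as each $\imath$divided power $B^{(r)}_{i,\ov{p}}$ is a polynomial in $B_i$ and $\tk_i$ (see \eqref{eq:iDPodd}--\eqref{eq:iDPev}), it fixes every $B^{(r)}_{i,\ov{p}}$ and every central factor $(q_i\tk_i)^u$; and it sends $B^{(n)}_{\tau j,\ov{t}}\mapsto B^{(n)}_{j,\ov{t}}$. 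This last point I would verify in each of the three cases (i)--(iii) of \S\ref{subsec:main}: in case (ii) it is immediate, since $\sigma_\imath(B_{\tau j}^n)=B_{\tau\tau j}^n=B_j^n$; in cases (i) and (iii) one has $\tau j=j$, and one checks directly that $\sigma_\imath$ fixes the generators ($B_j$, the central $\tk_j$, resp.\ the central $Z_j\tK_j\tK'_j$) out of which $B^{(n)}_{j,\ov{t}}$ is assembled via \eqref{eq:iDPodd}--\eqref{eq:iDPev}, resp.\ \eqref{eq:bn}--\eqref{eq:bbb}.

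With these in hand, applying $\sigma_\imath$ to a typical summand reverses the triple product, turning $B^{(r)}_{i,\ov{p}}B^{(n)}_{\tau j,\ov{t}}B^{(s)}_{i,\ov{p}+\ov{na_{ij}}}$ into $B^{(s)}_{i,\ov{p}+\ov{na_{ij}}}B^{(n)}_{j,\ov{t}}B^{(r)}_{i,\ov{p}}$, while leaving the scalar coefficient and the factor $(q_i\tk_i)^u$ untouched. I would organize the bookkeeping according to the parity of $m-na_{ij}$, so that \eqref{eq:m-aodd} is compared with \eqref{eq:evev1} and \eqref{eq:m-aeven} with \eqref{eq:evev0}. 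When $na_{ij}$ is even one has $\ov{na_{ij}}=\ov{0}$, the two flanking parity labels coincide, and the reversed summand is literally a summand of $\tf'_{i,j;n,m,\ov{p},\ov{t},e}$ carrying the same coefficient and the same parity constraint; the two sums then agree summand by summand and the lemma follows at once.

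The main obstacle is the case $na_{ij}$ odd, where $\ov{na_{ij}}=\ov{1}$. Reversal now produces the products $B^{(s)}_{i,\ov{p}+\ov{1}}B^{(n)}_{j,\ov{t}}B^{(r)}_{i,\ov{p}}$, whose flanking parity labels $\ov{p}+\ov{1}$ and $\ov{p}$ are interchanged relative to the products $B^{(s)}_{i,\ov{p}}B^{(n)}_{j,\ov{t}}B^{(r)}_{i,\ov{p}+\ov{1}}$ occurring in $\tf'_{i,j;n,m,\ov{p},\ov{t},e}$; these are genuinely distinct elements of $\tUi$. Matching a reversed summand to \eqref{eq:evev1}--\eqref{eq:evev0} moreover switches the two inner sums (the $\ov{r}=\ov{p}+\ov{1}$ sum of one against the $\ov{r}=\ov{p}$ sum of the other), and at the $u$-th layer the prescribed $q_i$-power exponents then differ by a factor of the form $q_i^{-2eu}$. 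Consequently the identity cannot hold summand by summand and must be proved at the level of the full sums. My plan is to expand $B^{(s)}_{i,\ov{p}+\ov{1}}$ through the recursion \eqref{lem:dividied power}: the difference $B^{(s)}_{i,\ov{p}+\ov{1}}-B^{(s)}_{i,\ov{p}}$ is a $\K(q)$-combination of terms $\tk_i\,B^{(s')}_{i,\ast}$ with $s'<s$, so, since $\tk_i$ is central, the label swap can be traded for a shift of the summation index $u$. The heart of the direct computation is then to check that this $u$-shift absorbs precisely the $q_i^{-2eu}$ discrepancy, i.e.\ that the two explicit coefficient systems (the $q_i$-powers and $q_i^2$-binomials of \eqref{eq:m-aodd} and \eqref{eq:evev1}, respectively \eqref{eq:m-aeven} and \eqref{eq:evev0}) are reconciled by a single $q_i^2$-binomial reindexing identity; equivalently, that the reversed sum is insensitive to the parity shift $\ov{p}\mapsto\ov{p}+\ov{1}$. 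Establishing this one identity completes the proof.
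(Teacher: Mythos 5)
Your first two paragraphs are exactly the paper's (unwritten) ``direct computation'': $\sigma_\imath$ is a $\K(q)$-linear anti-automorphism fixing $B_i$ and $\tk_i$ (hence fixing every $B^{(r)}_{i,\ov{q}}$, these being polynomials in the commuting elements $B_i,\tk_i$) and sending $B^{(n)}_{\tau j,\ov{t}}\mapsto B^{(n)}_{j,\ov{t}}$, so each summand of $\tf_{i,\tau j;n,m,\ov{p},\ov{t},e}$ is carried, coefficient intact, to the reversed monomial $B^{(s)}_{i,\ov{p}+\ov{na_{ij}}}B^{(n)}_{j,\ov{t}}B^{(r)}_{i,\ov{p}}$. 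The intended definition of $\tf'_{i,j;n,m,\ov{p},\ov{t},e}$ is precisely the sum of these mirror-image monomials with the same coefficients --- this is visible in the specialization \eqref{eq:yy2}, where the flanking subscripts are $\ov{na_{ij}}+\ov{p}$ on the left and $\ov{p}$ on the right --- and with that reading the lemma holds summand by summand for every parity of $na_{ij}$, with no further work. The displayed formulas \eqref{eq:evev1}--\eqref{eq:evev0} contain a typo: the exponents $r,s$ were swapped but the parity subscripts were not.

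The gap is in your third paragraph. Taking \eqref{eq:evev1}--\eqref{eq:evev0} at face value when $na_{ij}$ is odd, you propose to trade the label swap for a shift of $u$ and to verify that the two coefficient systems are reconciled by a reindexing identity. That identity is false, so this step cannot be completed: the printed $\tf'$ and $\sigma_\imath(\tf_{i,\tau j})$ are genuinely different elements of $\tUi$. Concretely, take $n=1$, $a_{ij}=-3$, $m=2$, $\ov{p}=\ov{0}$; only $(r,s,u)\in\{(2,0,0),(1,1,0),(0,2,0),(0,0,1)\}$ contribute, and one computes
\begin{align*}
\tf'_{i,j;1,2,\ov{0},\ov{t},e} &= q_i^{4e}B_jB^{(2)}_{i,\ov{1}}+B^{(2)}_{i,\ov{0}}B_j-q_i^{2e}B_iB_jB_i-q_i^{1+3e}\tk_iB_j \quad \text{(as printed)},\\
\sigma_\imath\big(\tf_{i,\tau j;1,2,\ov{0},\ov{t},e}\big) &= q_i^{4e}B_jB^{(2)}_{i,\ov{0}}+B^{(2)}_{i,\ov{1}}B_j-q_i^{2e}B_iB_jB_i-q_i^{1+3e}\tk_iB_j,
\end{align*}
whose difference equals $(1-q_i^{4e})\,\frac{q_i\tk_i}{[2]_{i}}\,B_j\neq 0$, since $B^{(2)}_{i,\ov{0}}-B^{(2)}_{i,\ov{1}}=q_i\tk_i/[2]_{i}$ is central. (One can also detect that something is off because the printed \eqref{eq:evev1} then fails the recursion \eqref{eqn:recursion formula2}, which the paper obtains from \eqref{eqn:recursion formula1} by applying $\sigma_\imath$ together with this very lemma.) So the resolution is not a hidden $q$-binomial identity but the corrected reading of the definition, after which the term-by-term matching of your second paragraph applies uniformly.
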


\subsection{Recursions and Serre-Lusztig relations}

\begin{thm}
  \label{thm:recursion}
Let $j \neq i \in \Iw$ be such that  $\btau i=i=\wb i$. Then for any $\ov{p}, \ov{t} \in\Z_2$, $n\ge 0$, $m\in \Z$, and $e=\pm1$, we have
\begin{align}
\label{eqn:recursion formula1}
q_i^{-e(2m+na_{ij})} & B_i \tf_{i,j;n,m,\ov{p},\ov{t},e} -\tf_{i,j;n,m,\ov{p},\ov{t},e}B_i\\ \notag
=&-[m+1]_{i}\, \tf_{i,j;n,m+1,\ov{p},\ov{t},e}
+[m+na_{ij}-1]_{i}\, q_i^{1-e(2m+na_{ij}-1)} \tk_i \tf_{i,j;n,m-1,\ov{p},\ov{t},e}.\\ \notag
\\
\label{eqn:recursion formula2}
q_i^{-e(2m+na_{ij})} & \tf'_{i,j;n,m,\ov{p},\ov{t},e}B_i-B_i\tf'_{i,j;n,m,\ov{p},\ov{t},e}\\ \notag
=&-[m+1]_{i}\, \tf'_{i,j;n,m+1,\ov{p},\ov{t},e}
+[m+na_{ij}-1]_{i}\, q_i^{1-e(2m+na_{ij}-1)} \tk_i \tf'_{i,j;n,m-1,\ov{p},\ov{t},e}.
\end{align}
\end{thm}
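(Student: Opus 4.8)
The plan is to prove the first recursion \eqref{eqn:recursion formula1} directly and then deduce \eqref{eqn:recursion formula2} from it by applying the anti-involution $\sigma_\imath$ of Lemma~\ref{lem:isigma}. Recall that $\sigma_\imath$ is $\K(q)$-linear, reverses products, fixes $\tk_i$, and fixes $B_i$ (since $\btau i=i$); moreover $\tf'_{i,j;n,m,\ov{p},\ov{t},e}=\sigma_\imath(\tf_{i,\btau j;n,m,\ov{p},\ov{t},e})$ by \eqref{eq:hos split}, and $a_{i,\btau j}=a_{ij}$. Thus, applying $\sigma_\imath$ to \eqref{eqn:recursion formula1} with $j$ replaced by $\btau j$ sends $B_i\tf_{i,\btau j;n,m}\mapsto \tf'_{i,j;n,m}B_i$ and $\tf_{i,\btau j;n,m}B_i\mapsto B_i\tf'_{i,j;n,m}$, while leaving the scalar prefactors and $\tk_i$ (central) untouched; the image is exactly \eqref{eqn:recursion formula2}. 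So it remains to establish \eqref{eqn:recursion formula1}.

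\textbf{Commuting $B_i$ into the outer factors.} The key point for \eqref{eqn:recursion formula1} is that $B_i$ never has to be moved past the inert middle factor $B_{j,\ov{t}}^{(n)}$: on the left it immediately meets $B^{(r)}_{i,\ov{p}}$, and on the right it meets $B^{(s)}_{i,\ov{p}+\ov{na_{ij}}}$. Since the $\imath$divided powers of $B_i$ are polynomials in $B_i$ with central ($\tk_i$-)coefficients, they commute with $B_i$, so both left and right multiplication are governed by \eqref{lem:dividied power}; in particular $B^{(s)}_{i,\ov{p}+\ov{na_{ij}}}B_i=[s+1]_i B^{(s+1)}_{i,\ov{p}+\ov{na_{ij}}}+\delta_{\ov{s},\ov{p}+\ov{na_{ij}}}\,q_i\tk_i[s]_i B^{(s-1)}_{i,\ov{p}+\ov{na_{ij}}}$. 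First I would expand $B_i\tf_{i,j;n,m,\ov{p},\ov{t},e}$ and $\tf_{i,j;n,m,\ov{p},\ov{t},e}B_i$ by \eqref{lem:dividied power}, splitting each into a \emph{raising} part ($r\mapsto r+1$, resp.\ $s\mapsto s+1$, with $u$ unchanged) and a \emph{lowering} part ($r\mapsto r-1$, resp.\ $s\mapsto s-1$, each carrying an extra $q_i\tk_i$, i.e.\ raising the $\tk_i$-power, and each subject to a parity constraint).

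\textbf{Matching in a common spanning set.} Both $q_i^{-e(2m+na_{ij})}B_i\tf_{i,j;n,m,\ov{p},\ov{t},e}-\tf_{i,j;n,m,\ov{p},\ov{t},e}B_i$ and the claimed right-hand side are $\K(q)$-linear combinations of the single spanning set $\{(q_i\tk_i)^{U}B^{(R)}_{i,\ov{p}}B_{j,\ov{t}}^{(n)}B^{(S)}_{i,\ov{p}+\ov{na_{ij}}}:R+S+2U=m+1\}$; note that $\tf_{i,j;n,m+1}$ contributes to the $U$-slots directly, while the term $\tk_i\tf_{i,j;n,m-1}$ lands in the same slots via $\tk_i(q_i\tk_i)^{u'}=q_i^{-1}(q_i\tk_i)^{u'+1}$. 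Hence no linear-independence input is needed: it suffices to check that the coefficient of each such monomial agrees on the two sides. The raising contributions (unchanged $\tk_i$-power) are matched against the $-[m+1]_i\tf_{i,j;n,m+1}$ part, and the lowering contributions (shifted $\tk_i$-power) against the $[m+na_{ij}-1]_i\,q_i^{1-e(2m+na_{ij}-1)}\tk_i\,\tf_{i,j;n,m-1}$ part. Because $\tf_{i,j;n,m}$ and $\tf_{i,j;n,m\pm1}$ are defined by different formulas according to the parity of $m-na_{ij}$, the verification splits into the case $m-na_{ij}$ odd (using \eqref{eq:m-aodd} for $\tf_m$ and \eqref{eq:m-aeven} for $\tf_{m\pm1}$) and the case $m-na_{ij}$ even (vice versa), and within each into the subcases $\ov{R}=\ov{p}$ and $\ov{R}=\ov{p}+\ov{1}$. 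In every subcase the identity reduces to elementary $q_i$-integer manipulations together with the $q^2$-Pascal recursion $\qbinom{N}{u}_{q_i^2}=q_i^{-2u}\qbinom{N-1}{u}_{q_i^2}+q_i^{2(N-u)}\qbinom{N-1}{u-1}_{q_i^2}$ and the addition rule $[a+b]_i=q_i^{b}[a]_i+q_i^{-a}[b]_i$.

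\textbf{The main obstacle.} The genuine difficulty is the bookkeeping of the $q_i$-power exponents across the parity cases: the two internal sums of each $\tf$ carry different exponents (e.g.\ $(m+na_{ij})(r+u)-r$ versus $(m+na_{ij}-2)(r+u)+r$ in the odd case), and the shifts $r\mapsto r\pm1$, $u\mapsto u$ or $u+1$ transform these in ways that must be reconciled against the global prefactors $q_i^{-e(2m+na_{ij})}$ and $q_i^{1-e(2m+na_{ij}-1)}$. The most delicate structural feature is that when $m-na_{ij}$ is odd, a given monomial of $\tf_{m-1}$ receives \emph{both} the left and the right lowering contributions, whereas when $m-na_{ij}$ is even only one of them occurs; this is precisely why the definition of $\tf$ is split by parity. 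A practical route is to carry out the $e=+1$ case in full and to obtain $e=-1$ by the evident symmetry of the defining formulas, noting throughout that the index $\ov{t}$ and the detailed shape of $B_{j,\ov{t}}^{(n)}$ are irrelevant since that factor is never disturbed.
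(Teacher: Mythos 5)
Your proposal is correct and follows essentially the same route as the paper: reduce \eqref{eqn:recursion formula2} to \eqref{eqn:recursion formula1} via $\sigma_\imath$ using \eqref{eq:hos split}, expand $B_i\tf$ and $\tf B_i$ with \eqref{lem:dividied power} into raising and lowering parts, and match coefficients of $(q_i\tk_i)^{U}B^{(R)}_{i,\ov{p}}B_{j,\ov{t}}^{(n)}B^{(S)}_{i,\ov{p}+\ov{na_{ij}}}$ case-by-case in the parity of $m-na_{ij}$ and of $\ov{R}$; the $q$-binomial verifications you describe are exactly the content of Lemmas~\ref{lem:eqn1}--\ref{lem:eqn2} in Appendix~\ref{App:B}. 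Your observation that in the odd case both lowering contributions land on the $\ov{R}=\ov{p}+\ov{1}$ monomials while in the even case each parity class receives exactly one is precisely how the paper groups its partial sums ($Y_2+Y_3+Y_5+Y_6$ versus $X_1+X_4+X_6$ and $X_2+X_3+X_5$).
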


By applying the anti-involution $\sigma_\imath$, we see that the identities \eqref{eqn:recursion formula1} and  \eqref{eqn:recursion formula2} (with $j$ replaced by $\tau j$) are equivalent.
Hence it suffices to prove  \eqref{eqn:recursion formula1}. The proof of \eqref{eqn:recursion formula1} is divided into the two cases depending on the parity of $m-na_{ij}$, which will occupy \S\ref{subsec:even} and \S\ref{subsec:odd} below, respectively.

\begin{thm}
\label{thm:f=f'=0}
Let $j \neq i\in \Iw$ such that  $\btau i=i=\wb i$, $\ov{p}, \ov{t} \in\Z_2$, $n\ge 0$,  and $e=\pm1$.
Then, for $m<0$ and $m>-na_{ij}$, we have
\begin{align}
\label{eqn:f=f'=0nottrival}
\tf_{i,j;n,m,\ov{p},\ov{t},e}=0,\qquad \tf_{i,j;n,m,\ov{p},\ov{t},e}'=0.
\end{align}
\end{thm}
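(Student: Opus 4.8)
The plan is to obtain Theorem~\ref{thm:f=f'=0} as a formal consequence of the three-term recursion in Theorem~\ref{thm:recursion}, bootstrapped from the single base case supplied by Theorem~{\bf A}. Throughout I would abbreviate $\tf_m := \tf_{i,j;n,m,\ov{p},\ov{t},e}$. First I would dispose of the range $m<0$, which is immediate from the definitions: in the defining sums \eqref{eq:m-aodd}--\eqref{eq:m-aeven} every monomial carries factors $B^{(r)}_{i,\ov{p}}$ and $B^{(s)}_{i,\ov{p}+\ov{na_{ij}}}$ with $r,s\ge 0$ together with $u\ge 0$ subject to $r+s+2u=m$, and for $m<0$ no such triple exists, so $\tf_m=0$ (and likewise $\tf_m'=0$). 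Since $a_{ij}\le 0$ for $i\neq j$, the remaining assertion $m>-na_{ij}$ means exactly $m\ge 1-na_{ij}$.

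Next I would pin down the base case $m=1-na_{ij}$. Here $m+na_{ij}=1$, so the $q_i^2$-binomial $\qbinom{\frac{m+na_{ij}-1}{2}}{u}_{q_i^2}=\qbinom{0}{u}_{q_i^2}$ kills every term with $u\ge 1$, and at $u=0$ both $q_i$-exponents in \eqref{eq:m-aodd} reduce to $0$. Thus $\tf_{1-na_{ij}}$ collapses to $\sum_{r+s=1-na_{ij}}(-1)^r B^{(r)}_{i,\ov{p}}B_{j,\ov{t}}^{(n)}B^{(s)}_{i,\ov{p}+\ov{na_{ij}}}$, which is precisely the left-hand side of \eqref{eq:SerreBn10}; hence $\tf_{1-na_{ij}}=0$ by Theorem~{\bf A}.

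I would then run an upward induction on $m$, assuming $\tf_k=0$ for all $1-na_{ij}\le k\le m$. Solving \eqref{eqn:recursion formula1} for the top term gives
\[
[m+1]_i\,\tf_{m+1} = [m+na_{ij}-1]_i\,q_i^{1-e(2m+na_{ij}-1)}\,\tk_i\,\tf_{m-1} - q_i^{-e(2m+na_{ij})}B_i\tf_m + \tf_m B_i.
\]
By hypothesis $\tf_m=0$. For the term $\tf_{m-1}$, either $m-1\ge 1-na_{ij}$, so it too vanishes by hypothesis, or $m-1=-na_{ij}$, i.e.\ $m=1-na_{ij}$, in which case its coefficient $[m+na_{ij}-1]_i=[0]_i=0$ removes it. In both cases the right-hand side is $0$, and since $m+1\ge 2-na_{ij}>0$ we have $[m+1]_i\neq 0$ in $\K(q)$, so $\tf_{m+1}=0$. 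This closes the induction and yields $\tf_m=0$ for all $m>-na_{ij}$.

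The step I expect to be the crux is exactly this decoupling: \eqref{eqn:recursion formula1} is genuinely three-term and a priori ties $\tf_{m+1}$ to the middle-range element $\tf_{-na_{ij}}$ (which is generically nonzero), so the induction can succeed only because the coefficient $[m+na_{ij}-1]_i$ vanishes precisely at the base $m=1-na_{ij}$. Finally, the vanishing of $\tf_m'$ I would obtain for free from \eqref{eq:hos split}: since $\tau i=i$ forces $a_{i,\tau j}=a_{ij}$, the element $\tf_{i,\tau j;n,m,\ov{p},\ov{t},e}$ vanishes in the same range by the above, and applying the $\K(q)$-linear anti-involution $\sigma_\imath$ gives $\tf_m'=\sigma_\imath(\tf_{i,\tau j;n,m,\ov{p},\ov{t},e})=0$. (Alternatively one repeats the identical induction using \eqref{eqn:recursion formula2}.)
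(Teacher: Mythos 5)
Your proposal is correct and follows essentially the same route as the paper: dispose of $m<0$ from the definitions, identify $\tf_{i,j;n,1-na_{ij},\ov{p},\ov{t},e}$ with the minimal-degree relation \eqref{eq:SerreBn10} of Theorem~{\bf A}, and induct upward on $m$ via \eqref{eqn:recursion formula1}, with the $\tf'$ case handled by $\sigma_\imath$ (or the mirror recursion). The paper's own proof is just a terser version of this; your explicit observation that the three-term recursion decouples at the base case because $[m+na_{ij}-1]_i=[0]_i=0$ is exactly the point the paper leaves implicit.
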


\begin{proof}
The case for $m<0$ is obvious.

Note that $\tf_{i,j;n,1-na_{ij},\ov{p}, \ov{t}, e}= \sum\limits_{r+s=1-na_{ij}} (-1)^r  B_{i,\overline{a_{ij}}+\overline{p_i}}^{(r)}B_{j,\ov{t}}^{(n)} B_{i,\overline{p}_i}^{(s)}$.
It follows from \eqref{eq:SerreBn1} that
$$\tf_{i,j;n,m,\ov{p},\ov{t},e}=\tf_{i,j;n,m,\ov{p},\ov{t},e}'=0$$
when $m=1-na_{ij}$.
From Theorem~ \ref{thm:recursion} we deduce by induction on $m \geq 1-na_{ij}$, then \eqref{eqn:f=f'=0nottrival} follows.
\end{proof}

\begin{rem}
(1) Theorems~\ref{thm:recursion} and \ref{thm:f=f'=0} hold (with the same proofs) if we replace $B_{j,\ov{t}}^{(n)}$ by $B_j^n$ throughout the definitions of $\tf_{i,j;n,m,\ov{p},\ov{t},e}$ and $\tf_{i,j;n,m,\ov{p},\ov{t},e}'$ in
\eqref{eq:m-aodd}--\eqref{eq:m-aeven} and \eqref{eq:evev1}--\eqref{eq:evev0}. The new variant of Theorem~ \ref{thm:f=f'=0} uses as an initial input the Serre-Lusztig relations of minimal degrees \eqref{eq:SerreBn12} (instead of \eqref{eq:SerreBn10}) in Theorem~{\bf A}.

(2) Theorems~\ref{thm:recursion} and \ref{thm:f=f'=0} remain valid over $\Ui =\Ui_\bvs$, once we replace $\tk_i$ by the scalar $\vs_i$ in all relevant places and use the version of $\imath$divided powers in \eqref{eq:iDPoddUi}--\eqref{eq:iDPevUi}.
\end{rem}

\subsection{Braid group symmetries for $\tUi$}

Let $i\neq j\in \Iw$ be such that $\btau i=i=\wb i$. The definitions of $\tf_{i,j;n,m,\ov{p},\ov{t},e}$ in \eqref{eq:m-aeven} and $\tf'_{i,j;n,m,\ov{p},\ov{t},e}$ in \eqref{eq:evev0} dramatically simplify when $m =-na_{ij}$ as follows:
\begin{align}
 \label{eq:yy}
\tf_{i,j;n,-na_{ij},\ov{p},\ov{t},e}
=&\sum_{r+s=-na_{ij}}(-1)^rq_i^{er}B_{i,\ov{p}}^{(r)}B_{j,\ov{t}}^{(n)}B_{i,\ov{na_{ij}}+\ov{p}}^{(s)}\\
&+\sum_{u\geq1}\sum_{\stackrel{r+s+2u=-na_{ij},}{\ov{r}=\ov{p}}}(-1)^{r+u}q_i^{er+eu} (q_i\tk_i)^u B_{i,\ov{p}}^{(r)}B_{j,\ov{t}}^{(n)}B_{i,\ov{na_{ij}}+\ov{p}}^{(s)},
\notag
\\
\tf'_{i,j;n,-na_{ij},\ov{p},\ov{t},e}
=&\sum_{r+s=-na_{ij}}(-1)^rq_i^{er} B_{i,\ov{na_{ij}}+\ov{p}}^{(s)}B_{j,\ov{t}}^{(n)} B_{i,\ov{p}}^{(r)}
 \label{eq:yy2} \\
&+\sum_{u\geq1}\sum_{\stackrel{r+s+2u=-na_{ij},}{\ov{r}=\ov{p}}}(-1)^{r+u}q_i^{er+eu} (q_i\tk_i)^u B_{i,\ov{na_{ij}}+\ov{p}}^{(s)}B_{j,\ov{t}}^{(n)} B_{i,\ov{p}}^{(r)}.
\notag
\end{align}


\begin{conj}
\label{conj:braid}
For each $i\in \Iw$ such that $\tau i =i =\wb i$, there exist automorphisms $\T_{i,e}'$ $\T_{i,-e}''$ of $\tUi$ , which are inverses to each other, such that

\begin{align*}
\T_{i,e}'(\tk_j) &=(-q_i^{1+e} \tk_i)^{-a_{ij}}\tk_j =\T_{i,-e}''(\tk_j),\quad \forall j\in \I;
\\
\T_{i,e}'(B_j) &=  \begin{cases} \tf_{i,j;1,-a_{ij},\ov{p},\ov{t},e} & \text{ if }i\neq j \in \Iw,
\\
(-q_i^{1+e}\tk_i)^{-1} B_i,  &\text{ if } i=j;  \end{cases}
\\
\T_{i,-e}''(B_j) &= \begin{cases} \tf'_{i,j;1,-a_{ij},\ov{p},\ov{t},e} & \text{ if } i\neq j \in \Iw,
\\
(-q_i^{1+e}\tk_i)^{-1} B_i,  &\text{ if } i=j.  \end{cases}
\end{align*}

\end{conj}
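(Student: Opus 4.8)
The plan is to prove that the assignment $\T_{i,e}'$ (and symmetrically $\T_{i,-e}''$) extends to a well-defined algebra endomorphism of $\tUi$, and then that the two compose to the identity. Since $\tUi$ in the quasi-split case is presented by the Serre presentation of \cite{CLW18}—whose relations consist of the centrality of the $\tk_\ell$, the trivial commutations when $a_{jk}=0$, and the $\imath$Serre relations—it suffices to check that the proposed images of the generators satisfy each of these. First I would dispose of the easy relations: since $\tk_i$ is central and $\T_{i,e}'(\tk_j)$ is a monomial in $\tk_i$ times $\tk_j$, centrality and compatibility with the weight grading follow at once from the formulas, as does the relation $\T_{i,e}'(B_j)\T_{i,e}'(B_k)=\T_{i,e}'(B_k)\T_{i,e}'(B_j)$ when $a_{ij}=a_{ik}=0$.

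The heart of the argument is that the images satisfy the $\imath$Serre relations of Proposition~\ref{prop:iSerre}. Because $\T_{i,e}'(B_i)=(-q_i^{1+e}\tk_i)^{-1}B_i$ is a central multiple of $B_i$, the $\imath$divided powers of $\T_{i,e}'(B_i)$ are explicit $\tk_i$-rescalings of the $B_{i,\ov{p}}^{(r)}$, and the required Serre relation among $\T_{i,e}'(B_i)$ and $\T_{i,e}'(B_j)=\tf_{i,j;1,-a_{ij},\ov{p},\ov{t},e}$ reduces to controlling how $B_i$ interacts with the element $\tf_{i,j;1,-a_{ij},\ov{p},\ov{t},e}$. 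Here the recursion of Theorem~\ref{thm:recursion} is the engine: iterating \eqref{eqn:recursion formula1} from the base element $\tf_{i,j;1,-a_{ij},\ov{p},\ov{t},e}$ generates the whole family $\tf_{i,j;1,m,\ov{p},\ov{t},e}$, and the vanishing $\tf_{i,j;1,m,\ov{p},\ov{t},e}=0$ for $m>-a_{ij}$ from Theorem~\ref{thm:f=f'=0} is exactly what is needed to collapse the Serre sum. This parallels Lusztig's derivation of the braid relations from the recursions \eqref{eq:Lrec}--\eqref{eq:LrecF}.

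For invertibility I would exploit the anti-involution $\sigma_\imath$ of Lemma~\ref{lem:isigma} together with the identity \eqref{eq:hos split}, namely $\tf'_{i,j;n,m,\ov{p},\ov{t},e}=\sigma_\imath(\tf_{i,\tau j;n,m,\ov{p},\ov{t},e})$. Applying $\sigma_\imath$ transports every relation verified for $\T_{i,e}'$ to the corresponding relation for $\T_{i,-e}''$, so that $\T_{i,-e}''$ is simultaneously a well-defined endomorphism governed by \eqref{eqn:recursion formula2}. It then remains to verify $\T_{i,-e}''\circ\T_{i,e}'=\id$ on generators: on $\tk_j$ and on $B_i$ this is a scalar computation, while on $B_j$ with $j\ne i$ one substitutes and uses the recursions of Theorem~\ref{thm:recursion} to telescope $\T_{i,-e}''(\tf_{i,j;1,-a_{ij},\ov{p},\ov{t},e})$ back to $B_j$.

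The main obstacle—precisely the difficulty flagged by the absence of any braid action on $\tUi$-modules—is well-definedness as a homomorphism, since every relation must be checked algebraically inside $\tUi$. The hardest relations are the mixed $\imath$Serre relations among $\T_{i,e}'(B_j)$ and $\T_{i,e}'(B_k)$ for distinct $j,k\ne i$: these involve products of two Serre-Lusztig elements and are not governed by a single recursion. I therefore expect a fully general proof to require the $\imath$Hall algebra realization (cf. \cite{LW19b}), in which $\T_{i,e}'$ arises from a reflection functor and all relations hold automatically; the strategy above settles the single-index relations and the mutual-inverse property cleanly, leaving the braid-type relations to the $\imath$Hall approach in the quasi-split case.
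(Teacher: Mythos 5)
The first thing to say is that the paper does not prove this statement: it is stated and labelled as Conjecture~\ref{conj:braid}, and the authors explicitly record that ``even for finite type, no braid group action on $\Ui$-modules is available,'' that this ``makes it difficult to verify directly the conjecture,'' and that a proof (for {\em quasi-split} $\tUi$ only) is deferred to a sequel based on the $\imath$Hall algebra realization of \cite{LW19b}. So there is no in-paper argument to compare yours against; any complete proof you supply would go beyond the paper. Measured against that bar, your proposal is an outline of a strategy rather than a proof, and you concede as much in your final paragraph: the mixed $\imath$Serre relations between $\T_{i,e}'(B_j)$ and $\T_{i,e}'(B_k)$ for distinct $j,k\neq i$ are left entirely to the (external, not-yet-written) Hall algebra argument. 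That is not a peripheral case one can quarantine --- it is a family of defining relations of the Serre presentation, so without it you have not shown that $\T_{i,e}'$ is well defined.

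Beyond that acknowledged gap, the steps you describe as ``settled cleanly'' are asserted rather than carried out, and none of them is a formal consequence of Theorems~\ref{thm:recursion} and \ref{thm:f=f'=0}. The target $\imath$Serre relation for the images is $\sum_{r+s=1-a_{ij}}(-1)^r\,\T'(B_i)^{(r)}_{\ov p}\,\tf_{i,j;1,-a_{ij},\ov p,\ov t,e}\,\T'(B_i)^{(s)}_{\ov p+\ov{a_{ij}}}=0$, where $\tf_{i,j;1,-a_{ij},\ov p,\ov t,e}$ is itself a sum of sandwiched $\imath$divided powers with $\tk_i$-dependent coefficients (see \eqref{eq:yy}); the recursion \eqref{eqn:recursion formula1} controls single commutators $B_i\tf-\tf B_i$, not such iterated sandwiches, and no telescoping mechanism is exhibited. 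Likewise, computing $\T_{i,-e}''(\tf_{i,j;1,-a_{ij},\ov p,\ov t,e})$ presupposes that $\T_{i,-e}''$ is already known to be a homomorphism and produces nested products of $\tf'$-elements whose simplification back to $B_j$ is exactly the kind of identity Lusztig needs whole chapters to establish in the quantum group case. Finally, your reliance on the Serre presentation of \cite{CLW18} restricts the argument to the quasi-split case from the outset, whereas the conjecture is stated for general admissible pairs (where the displayed formulas do not even specify the images of the generators attached to $\Iblack$). In short: the approach is a sensible heuristic, consistent with the authors' own stated plan, but it does not constitute a proof, and the paper itself offers none.
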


\begin{rem}
Let $i\in \Iw$ such that $\tau i =i =\wb i$. 
By choosing the distinguished parameters such that $\vs_i=-q_i^{-2}$ (cf. \cite[\S7]{LW19b}), $\T_{i,1}'$ and $\T_{i,-1}''$ induce the braid group symmetries on $\Ui$, which are denoted by the same notations. The $\T'_{i,1}$ and $\T_{i,-1}''$ on $\Ui$
coincide with the braid group operators (denoted by $\tau_i$ and $\tau_i^{-}$) obtained earlier for split $\Ui$ of finite type in \cite{KP11} and split affine type $A_1$ in \cite{BaK20} via a computer computation (also cf. \cite{Ter18} for a computer free verification).
\end{rem}

In a separate publication, we shall develop a Hall algebra approach (cf. \cite{LW19b}) to prove Conjecture~\ref{conj:braid} for quasi-split $\imath$quantum groups. We shall also establish various favorable properties of these symmetries, in a way strikingly parallel to those of Lusztig for $\U$ \cite{Lu93}.

\subsection{Proof of Theorem~\ref{thm:recursion} for $m-na_{ij}$ even}
  \label{subsec:even}

Using \eqref{lem:dividied power} and the definitions of $\tf_{i,j;n,m,\ov{p},\ov{t},e}$ in
\eqref{eq:m-aodd}--\eqref{eq:m-aeven}, we have
\begin{align}
\label{eq:qcomm}
&q_i^{-e(2m+na_{ij})}B_i\tf_{i,j;n,m,\ov{p},\ov{t},e}-\tf_{i,j;n,m,\ov{p},\ov{t},e}B_i\\ \notag
=&\sum_{u\geq0}(q_i\tk_i)^u\\   \notag
&  \cdot\Big\{
\sum_{\stackrel{ r+s+2u=m} {\ov{r}=\ov{p} +\ov{1}}}
(-1)^r q_i^{-e(m+na_{ij}-1)(r+u)-e(2m+na_{ij})} [r+1]_{i}
\qbinom{\frac{m+na_{ij}}{2}}{u}_{q_i^2}
B^{(r+1)}_{i,\ov{p}}B_{j,\ov{t}}^{(n)}B^{(s)}_{i,\ov{p}+\ov{na_{ij}}} \\ \notag
&
+\sum_{\stackrel{ r+s+2u=m}{ \ov{r}=\ov{p}}}
(-1)^rq_i^{-e(m+na_{ij}-1)(r+u)-e(2m+na_{ij})}[r+1]_{i}
\qbinom{\frac{m+na_{ij}-2}{2}}{u}_{q_i^2}
B^{(r+1)}_{i,\ov{p}}B_{j,\ov{t}}^{(n)}B^{(s)}_{i,\ov{p}+\ov{na_{ij}}}\\ \notag
& -\sum_{\stackrel{ r+s+2u=m} {\ov{r}=\ov{p} +\ov{1}}}
(-1)^rq_i^{-e(m+na_{ij}-1)(r+u)}[s+1]_{i}
\qbinom{\frac{m+na_{ij}}{2}}{u}_{q_i^2}
B^{(r)}_{i,\ov{p}}B_{j,\ov{t}}^{(n)}B^{(s+1)}_{i,\ov{p}+\ov{na_{ij}}} \\ \notag
&
-\sum_{\stackrel{ r+s+2u=m} {\ov{r}=\ov{p} }}
(-1)^rq_i^{-e(m+na_{ij}-1)(r+u)}[s+1]_{i}
\qbinom{\frac{m+na_{ij}-2}{2}}{u}_{q_i^2}
B^{(r)}_{i,\ov{p}}B_{j,\ov{t}}^{(n)}B^{(s+1)}_{i,\ov{p}+\ov{na_{ij}}} \\ \notag
&\quad +q_i\tk_i\Big(
\sum_{\stackrel{ r+s+2u=m}{\ov{r}=\ov{p}}}
(-1)^rq_i^{-e(m+na_{ij}-1)(r+u)-e(2m+na_{ij})}[r]_{i}
\qbinom{\frac{m+na_{ij}-2}{2}}{u}_{q_i^2}
B^{(r-1)}_{i,\ov{p}}B_{j,\ov{t}}^{(n)}B^{(s)}_{i,\ov{p}+\ov{na_{ij}}}\\ \notag
& -\sum_{\stackrel{ r+s+2u=m}{ \ov{r}=\ov{p}}}
(-1)^rq_i^{-e(m+na_{ij}-1)(r+u)}[s]_{i}
\qbinom{\frac{m+na_{ij}-2}{2}}{u}_{q_i^2}
B^{(r)}_{i,\ov{p}}B_{j,\ov{t}}^{(n)}B^{(s-1)}_{i,\ov{p}+\ov{na_{ij}}}\Big)\Big\}
\\
=& - \sum_{u\geq0}(q_i\tk_i)^u  ( X_1 +X_2 + X_3 +X_4 +X_5 +X_6 ),
\notag
\end{align}
where
\begin{align*}
&
X_1 =
\sum_{\stackrel{ r+s+2u=m+1}{ \ov{r}=\ov{p}}}
(-1)^{r}q_i^{-e(m+na_{ij}-1)(r+u)-e(m+1)}[r]_{i} \qbinom{\frac{m+na_{ij}}{2}}{u}_{q_i^2}B^{(r)}_{i,\ov{p}}B_{j,\ov{t}}^{(n)}B^{(s)}_{i,\ov{p}+\ov{na_{ij}}}, 
\\
&
X_2 =
\sum_{\stackrel{ r+s+2u=m+1}{ \ov{r}=\ov{p}+\ov{1} }}
(-1)^rq_i^{-e(m+na_{ij}-1)(r+u)-e(m+1)}[r]_{i} \qbinom{\frac{m+na_{ij}-2}{2}}{u}_{q_i^2}B^{(r)}_{i,\ov{p}}B_{j,\ov{t}}^{(n)}B^{(s)}_{i,\ov{p}+\ov{na_{ij}}}, 
\\
&
X_3 =
\sum_{\stackrel{ r+s+2u=m+1}{\ov{r}=\ov{p} +\ov{1}}}
(-1)^rq_i^{-e(m+na_{ij}-1)(r+u)}[s]_{i} \qbinom{\frac{m+na_{ij}}{2}}{u}_{q_i^2}B^{(r)}_{i,\ov{p}}B_{j,\ov{t}}^{(n)}B^{(s)}_{i,\ov{p}+\ov{na_{ij}}}, 
\\
&
X_4 =
\sum_{\stackrel{ r+s+2u=m+1}{ \ov{r}=\ov{p}}}
(-1)^rq_i^{-e(m+na_{ij}-1)(r+u)}[s]_{i} \qbinom{\frac{m+na_{ij}-2}{2}}{u}_{q_i^2}B^{(r)}_{i,\ov{p}}B_{j,\ov{t}}^{(n)}B^{(s)}_{i,\ov{p}+\ov{na_{ij}}},
\end{align*}
and
\begin{align*}
&
X_5 =
\sum_{\stackrel{ r+s+2u=m+1}{ \ov{r}=\ov{p}+\ov{1} }}
(-1)^rq_i^{-e(m+na_{ij}-1)(r+u)-e(2m+na_{ij})}
[r+1]_{i} \qbinom{\frac{m+na_{ij}-2}{2}}{u-1}_{q_i^2}B^{(r)}_{i,\ov{p}}B_{j,\ov{t}}^{(n)}B^{(s)}_{i,\ov{p}+\ov{na_{ij}}},
\notag \\
&
X_6 =
\sum_{\stackrel{ r+s+2u=m+1}{ \ov{r}=\ov{p}}}
(-1)^rq_i^{-e(m+na_{ij}-1)(r+u-1)}[s+1]_{i} \qbinom{\frac{m+na_{ij}-2}{2}}{u-1}_{q_i^2}B^{(r)}_{i,\ov{p}}B_{j,\ov{t}}^{(n)}B^{(s)}_{i,\ov{p}+\ov{na_{ij}}}.
\end{align*}

We compute the partial sum
\begin{align*}
&\sum_{u\geq0}(q_i\tk_i)^u (X_1 +X_4 +X_6)
= \sum_{u\geq0}(q_i\tk_i)^u
\sum_{\stackrel{ r+s+2u=m+1}{ \ov{r}=\ov{p}}}(-1)^rq_i^{-e(m+na_{ij}-1)(r+u-1)}\\ \notag
&\cdot \Big\{ q_i^{-e(2m+na_{ij})}[r]_{i} \qbinom{\frac{m+na_{ij}}{2}}{u}_{q_i^2}
+
q_i^{-e(m+na_{ij}-1)}[s]_{i} \qbinom{\frac{m+na_{ij}-2}{2}}{u}_{q_i^2}
+[s+1]_{i} \qbinom{\frac{m+na_{ij}-2}{2}}{u-1}_{q_i^2}\Big\}\\
&\cdot B^{(r)}_{i,\ov{p}}B_{j,\ov{t}}^{(n)}B^{(s)}_{i,\ov{p}+\ov{na_{ij}}}. \notag
\end{align*}
By Lemma \ref{lem:eqn1} (with $a=a_{ij}$), we rewrite the above as
\begin{align}\label{eqn:h146}
&\sum_{u\geq 0} (q_i\tk_i)^u (X_1 +X_4 +X_6)
\\ \notag
=& [m+1]_{i} \sum_{u\geq0}(q_i\tk_i)^u
\sum_{\stackrel{ r+s+2u=m+1}{ \ov{r}=\ov{p}}}(-1)^r q_i^{-e((m+na_{ij}-1)(r+u)+r)}\qbinom{\frac{m+na_{ij}}{2}}{u}_{q_i^2}B^{(r)}_{i,\ov{p}}B_{j,\ov{t}}^{(n)}B^{(s)}_{i,\ov{p}+\ov{na_{ij}}}\\ \notag
& - [m+na_{ij}-1]_{i} q_i^{-e(2m+na_{ij}-1)}\sum_{u\geq 0}(q_i\tk_i)^{u+1}\sum_{\stackrel{ r+s+2u=m-1}{ \ov{r}=\ov{p} }}(-1)^r\\ \notag
& \quad \cdot q_i^{-e((m+na_{ij}-3)(u+r)+r)}
\qbinom{\frac{m+na_{ij}-2}{2}}{u}_{q_i^2}
B^{(r)}_{i,\ov{p}}B_{j,\ov{t}}^{(n)}B^{(s)}_{i,\ov{p}+\ov{na_{ij}}}.
\end{align}

Similarly, we compute another partial sum
\begin{align*}
&\sum_{u\geq 0}(q_i\tk_i)^u (X_2 +X_3 +X_5)
 \\ \notag
=&
\sum_{u\geq0}(q_i\tk_i)^u \sum_{\stackrel{ r+s+2u=m+1}{ \ov{r}=\ov{p}+\ov{1} }}(-1)^r
q_i^{-e((m+na_{ij}-1)(r+u)+2m+na_{ij})}\\ \notag
&\cdot \left\{q_i^{e(2m+na_{ij})}[s]_{i} \qbinom{\frac{m+na_{ij}}{2}}{u}_{q_i^2}+q_i^{e(m+na_{ij}-1)}[r]_{i} \qbinom{\frac{m+na_{ij}-2}{2}}{u}_{q_i^2}+
[r+1]_{i} \qbinom{\frac{m+na_{ij}-2}{2}}{u-1}_{q_i^2}
\right\}
\\
&\cdot B^{(r)}_{i,\ov{p}}B_{j,\ov{t}}^{(n)}B^{(s)}_{i,\ov{p}+\ov{na_{ij}}}.
\end{align*}
By Lemma \ref{lem:eqn1} (with $a=a_{ij}$), we continue to rewrite the above as
\begin{align}\label{eqn:h235}
&\sum_{u\geq0}(q_i\tk_i)^u  (X_2 +X_3 +X_5)
\\ \notag
=&
[m+1]_{i} \sum_{u\geq 0}(q_i\tk_i)^u \sum_{\stackrel{ r+s+2u=m+1}{ \ov{r}=\ov{p}+\ov{1} }}
(-1)^rq_i^{-e((m+na_{ij}+1)(r+u)-r)}\qbinom{\frac{m+na_{ij}}{2}}{u}_{q_i^2}B^{(r)}_{i,\ov{p}}B_{j,\ov{t}}^{(n)}B^{(s)}_{i,\ov{p}+\ov{na_{ij}}}\\\notag
& -[m+na_{ij}-1]_{i} q_i^{-e(2m+na_{ij}-1)}\sum_{u\geq0} (q_i\tk_i)^{u+1} \sum_{\stackrel{r+s+2u=m-1}{ r=\ov{p}+\ov{1} }}(-1)^r  \\ \notag
 &\quad\quad \cdot q_i^{-e((m+na_{ij}-1)(r+u)-r)}\qbinom{\frac{m+na_{ij}-2}{2}}{u}_{q_i^2}
B^{(r)}_{i,\ov{p}}B_{j,\ov{t}}^{(n)}B^{(s)}_{i,\ov{p}+\ov{na_{ij}}}.
\end{align}
Plugging \eqref{eqn:h146} and \eqref{eqn:h235} into \eqref{eq:qcomm}, by using \eqref{eq:m-aodd}--\eqref{eq:m-aeven}, we obtain
\begin{align*}
q_i^{-e(2m+na_{ij})} & B_i\tf_{i,j;n,m,\ov{p},\ov{t},e}-\tf_{i,j;n,m,\ov{p},\ov{t},e}B_i\\
=&-[m+1]_{i} \tf_{i,j;n,m+1,\ov{p},\ov{t},e}
+[m+na_{ij}-1]_{i} q_i^{1-e(2m+na_{ij}-1)} \tk_i \tf_{i,j;n,m-1,\ov{p},\ov{t},e}.
\end{align*}

\subsection{Proof of Theorem~\ref{thm:recursion} for $m-na_{ij}$ odd}
  \label{subsec:odd}

Using \eqref{lem:dividied power} and the definitions of $\tf_{i,j;n,m,\ov{p},\ov{t},e}$ in
\eqref{eq:m-aodd}--\eqref{eq:m-aeven}, we have
\begin{align}
 \label{eq:qcomm2}
&q_i^{-e(2m+na_{ij})}B_i \tf_{i,j;n,m,\ov{p},\ov{t},e}-\tf_{i,j;n,m,\ov{p},\ov{t},e}B_i\\ \notag
=&\sum_{u\geq0}(q_i\tk_i)^u\\ \notag
&\cdot \Big\{
\sum_{\stackrel{ r+s+2u=m}{ \ov{r}=\ov{p}+\ov{1} }}(-1)^r
q_i^{-e((m+na_{ij})(r+u)-r+2m+na_{ij})}[r+1]_{i} \qbinom{\frac{m+na_{ij}-1}{2}}{u}_{q_i^2}B^{(r+1)}_{i,\ov{p}}B_{j,\ov{t}}^{(n)}B^{(s)}_{i,\ov{p}+\ov{na_{ij}}} \\
& \notag
\quad
+\sum_{\stackrel{ r+s+2u=m}{ \ov{r}=\ov{p}}}(-1)^r
q_i^{-e((m+na_{ij}-2)(r+u)+r+2m+na_{ij})}[r+1]_{i} \qbinom{\frac{m+na_{ij}-1}{2}}{u}_{q_i^2}B^{(r+1)}_{i,\ov{p}}B_{j,\ov{t}}^{(n)}B^{(s)}_{i,\ov{p}+\ov{na_{ij}}} \\
& \notag
-\sum_{\stackrel{ r+s+2u=m}{ \ov{r}=\ov{p}+\ov{1} }}(-1)^r
q_i^{-e((m+na_{ij})(r+u)-r)}[s+1]_{i} \qbinom{\frac{m+na_{ij}-1}{2}}{u}_{q_i^2}B^{(r)}_{i,\ov{p}}B_{j,\ov{t}}^{(n)}B^{(s+1)}_{i,\ov{p}+\ov{na_{ij}}}\\
& \notag
-\sum_{\stackrel{ r+s+2u=m}{ \ov{r}=\ov{p}}}(-1)^r
q_i^{-e((m+na_{ij}-2)(r+u)+r)}[s+1]_{i} \qbinom{\frac{m+na_{ij}-1}{2}}{u}_{q_i^2}B^{(r)}_{i,\ov{p}}B_{j,\ov{t}}^{(n)}B^{(s+1)}_{i,\ov{p}+\ov{na_{ij}}}\\
& \notag
+(q_i\tk_i)\Big(\sum_{\stackrel{ r+s+2u=m}{ \ov{r}=\ov{p}}}(-1)^r
q_i^{-e((m+na_{ij}-2)(r+u)+r+2m+na_{ij})}[r]_{i} \qbinom{\frac{m+na_{ij}-1}{2}}{u}_{q_i^2}B^{(r-1)}_{i,\ov{p}}B_{j,\ov{t}}^{(n)}B^{(s)}_{i,\ov{p}+\ov{na_{ij}}}\\
& \notag
\quad \quad -\sum_{\stackrel{ r+s+2u=m}{ \ov{r}=\ov{p}+\ov{1} }}(-1)^r
q_i^{-e((m+na_{ij})(r+u)-r)}[s]_{i} \qbinom{\frac{m+na_{ij}-1}{2}}{u}_{q_i^2}B^{(r)}_{i,\ov{p}}B_jB^{(s-1)}_{i,\ov{p}+\ov{a_{ij}}}\Big)\Big\}
\\
=& - \sum_{u\geq 0}(q_i\tk_i)^u (Y_1 + Y_2 + Y_3 + Y_4 + Y_5 + Y_6),
\notag
\end{align}
where
\begin{align*}
&
Y_1 =
\sum_{\stackrel{ r+s+2u=m+1}{ \ov{r}=\ov{p}}}(-1)^r
q_i^{-e((m+na_{ij})(r+u)-r+m+1)}[r]_{i} \qbinom{\frac{m+na_{ij}-1}{2}}{u}_{q_i^2}B^{(r)}_{i,\ov{p}}B_{j,\ov{t}}^{(n)}B^{(s)}_{i,\ov{p}+\ov{na_{ij}}}, \\
&
Y_2 =
\sum_{\stackrel{ r+s+2u=m+1}{ \ov{r}=\ov{p}+\ov{1} }}(-1)^r
q_i^{-e((m+na_{ij}-2)(r+u)+r+1+m)}[r]_{i} \qbinom{\frac{m+na_{ij}-1}{2}}{u}_{q_i^2}B^{(r)}_{i,\ov{p}}B_{j,\ov{t}}^{(n)}B^{(s)}_{i,\ov{p}+\ov{na_{ij}}},\\
&
Y_3 =
\sum_{\stackrel{ r+s+2u=m+1}{ \ov{r}=\ov{p}+\ov{1} }}(-1)^r
q_i^{-e((m+na_{ij})(r+u)-r)}[s]_{i} \qbinom{\frac{m+na_{ij}-1}{2}}{u}_{q_i^2}B^{(r)}_{i,\ov{p}}B_{j,\ov{t}}^{(n)}B^{(s)}_{i,\ov{p}+\ov{na_{ij}}}, \\
&
Y_4 =
\sum_{\stackrel{ r+s+2u=m+1}{ \ov{r}=\ov{p}}}(-1)^r
q_i^{-e((m+na_{ij}-2)(r+u)+r)}[s]_{i} \qbinom{\frac{m+na_{ij}-1}{2}}{u}_{q_i^2}B^{(r)}_{i,\ov{p}}B_{j,\ov{t}}^{(n)}B^{(s)}_{i,\ov{p}+\ov{na_{ij}}}
\Big\},
\end{align*}
and
\begin{align*}
&
Y_5 =
\sum_{\stackrel{ r+s+2u=m+1}{ \ov{r}=\ov{p}+\ov{1} }}(-1)^r
q_i^{-e((m+na_{ij}-2)(r+u)+r+1+2m+na_{ij})}
[r+1]_{i} \qbinom{\frac{m+na_{ij}-1}{2}}{u-1}_{q_i^2}B^{(r)}_{i,\ov{p}}B_{j,\ov{t}}^{(n)}B^{(s)}_{i,\ov{p}+\ov{na_{ij}}},
\notag \\
&
Y_6 =
\sum_{\stackrel{ r+s+2u=m+1}{ \ov{r}=\ov{p}+\ov{1} }}(-1)^r
q_i^{-e((m+na_{ij})(r+u-1)-r)}[s+1]_{i} \qbinom{\frac{m+na_{ij}-1}{2}}{u-1}_{q_i^2}B^{(r)}_{i,\ov{p}}B_{j,\ov{t}}^{(n)}B^{(s)}_{i,\ov{p}+\ov{na_{ij}}}.
\end{align*}

Note that
\begin{align}
\label{eq:qbinom}
q_i^{-e(m+1+2u)}[r]_{i} +[s]_{i} =[m+1]_{i} q_i^{-e(2u+r)}-q_i^{-e(m+1-r)}[2u]_{i},
\end{align}
where $r+s=m+1-2u$, and
\begin{align}
\label{eq:qbinom2}
[2u]_{i} \qbinom{\frac{m+na_{ij}-1}{2}}{u}_{q_i^2}=[m+na_{ij}-1]_{i} \qbinom{\frac{m+na_{ij}-3}{2}}{u-1}_{q_i^2}
\end{align}
for $u\geq1$. We compute a partial sum
\begin{align}\label{eqn:h'14}
&\sum_{u\geq0}(q_i\tk_i)^u (Y_1 +Y_4)
\\  \notag
=&
\sum_{u\geq0}(q_i\tk_i)^u\Big\{
\sum_{\stackrel{ r+s+2u=m+1}{ \ov{r}=\ov{p}}}(-1)^r
q_i^{-e((m+na_{ij}-2)(r+u)+r)}(q_i^{-e(m+1+2u)}[r]_{i} +[s]_{i} )\\\notag
& \quad \quad \quad \quad \cdot\qbinom{\frac{m+na_{ij}-1}{2}}{u}_{q_i^2}B^{(r)}_{i,\ov{p}}B_{j,\ov{t}}^{(n)}B^{(s)}_{i,\ov{p}+\ov{na_{ij}}}\Big\}\\\notag
=&
[m+1]_{i} \sum_{u\geq0}(q_i\tk_i)^u
\sum_{\stackrel{ r+s+2u=m+1}{ \ov{r}=\ov{p}}}(-1)^r
q_i^{-e(m+na_{ij})(r+u)}\qbinom{\frac{m+na_{ij}-1}{2}}{u}_{q_i^2}B^{(r)}_{i,\ov{p}}B_{j,\ov{t}}^{(n)}B^{(s)}_{i,\ov{p}+\ov{na_{ij}}} \\\notag
&
 -q_i^{-e(m+1)}\sum_{u\geq0}(q_i\tk_i)^u\sum_{\stackrel{ r+s+2u=m+1}{ \ov{r}=\ov{p}}}(-1)^r
q_i^{-e(m+na_{ij}-2)(r+u)}[2u]_{i} \qbinom{\frac{m+na_{ij}-1}{2}}{u}_{q_i^2}B^{(r)}_{i,\ov{p}}B_{j,\ov{t}}^{(n)}B^{(s)}_{i,\ov{p}+\ov{na_{ij}}}\\\notag
=&[m+1]_{i} \sum_{u\geq0}(q_i\tk_i)^u
\sum_{\stackrel{ r+s+2u=m+1}{ \ov{r}=\ov{p}}}(-1)^r
q_i^{-e(m+na_{ij})(r+u)}\qbinom{\frac{m+na_{ij}-1}{2}}{u}_{q_i^2}B^{(r)}_{i,\ov{p}}B_{j,\ov{t}}^{(n)}B^{(s)}_{i,\ov{p}+\ov{na_{ij}}}\\\notag
&
 -q_i^{-e(m+1)}\sum_{u\geq1}(q_i\tk_i)^u\sum_{\stackrel{ r+s+2u=m+1}{ \ov{r}=\ov{p}}}(-1)^r \\\notag
& \quad \quad \quad \quad
\cdot q_i^{-e(m+na_{ij}-2)(r+u)}[m+na_{ij}-1]_{i} \qbinom{\frac{m+na_{ij}-3}{2}}{u-1}_{q_i^2}B^{(r)}_{i,\ov{p}}B_{j,\ov{t}}^{(n)}B^{(s)}_{i,\ov{p}+\ov{na_{ij}}}
\end{align}
\begin{align*}
=&[m+1]_{i} \sum_{u\geq0}(q_i\tk_i)^u
\sum_{\stackrel{ r+s+2u=m+1}{ \ov{r}=\ov{p}}}(-1)^r
q_i^{-e(m+na_{ij})(r+u)}\qbinom{\frac{m+na_{ij}-1}{2}}{u}_{q_i^2}B^{(r)}_{i,\ov{p}}B_{j,\ov{t}}^{(n)}B^{(s)}_{i,\ov{p}+\ov{na_{ij}}}\\\notag
&
-[m+na_{ij}-1]_{i} q_i^{1-e(2m+na_{ij}-1)}\tk_i\sum_{u\geq0}(q_i\tk_i)^u\sum_{\stackrel{ r+s+2u=m-1}{ \ov{r}=\ov{p} }}(-1)^r\\ \notag
& \quad \quad \quad \quad
 \cdot q_i^{-e(m+na_{ij}-2)(r+u)}\qbinom{\frac{m+na_{ij}-3}{2}}{u}_{q_i^2}B^{(r)}_{i,\ov{p}}B_{j,\ov{t}}^{(n)}B^{(s)}_{i,\ov{p}+\ov{na_{ij}}}.
\end{align*}
Here the second equality follows by using \eqref{eq:qbinom}, the third equality follows by \eqref{eq:qbinom2}.

Now we compute another partial sum
\begin{align*}
 \sum_{u\geq0} & (q_i\tk_i)^u (Y_2 + Y_3 + Y_5 + Y_6)
\\ \notag
=&
\sum_{u\geq0}(q_i\tk_i)^u\sum_{\stackrel{ r+s+2u=m+1}{ \ov{r}=\ov{p}+\ov{1} }}(-1)^rq_i^{-e((m+na_{ij}-2)(r+u-1)+r)}\\ \notag
& \cdot
\Big \{q_i^{-e(2m+na_{ij}-1)}[r]_{i} \qbinom{\frac{m+na_{ij}-1}{2}}{u}_{q_i^2}
+q_i^{-e(2u+m+na_{ij}-2)}[s]_{i} \qbinom{\frac{m+na_{ij}-1}{2}}{u}_{q_i^2}\\ \notag
&\quad  +q_i^{-e(3m+2na_{ij}-1)}[r+1]_{i} \qbinom{\frac{m+na_{ij}-1}{2}}{u-1}_{q_i^2}
+q_i^{-e(2u-2)}[s+1]_{i} \qbinom{\frac{m+na_{ij}-1}{2}}{u-1}_{q_i^2}\Big\}
\\
& \notag
\cdot B^{(r)}_{i,\ov{p}}B_{j,\ov{t}}^{(n)}B^{(s)}_{i,\ov{p}+\ov{na_{ij}}}.
\end{align*}

By Lemma \ref{lem:eqn2} (with $a=a_{ij}$), we rewrite the above as
\begin{align}\label{eqn:h'od2356}
&\sum_{u\geq0}(q_i\tk_i)^u (Y_2 + Y_3 + Y_5 + Y_6)
\\ \notag
=&
[m+1]_{i} \sum_{u\geq0}(q_i\tk_i)^u\sum_{\stackrel{ r+s+2u=m+1}{ \ov{r}=\ov{p}+\ov{1} }}(-1)^r
q_i^{-e(m+na_{ij})(r+u)}\qbinom{\frac{m+na_{ij}+1}{2}}{u}_{q_i^2}B^{(r)}_{i,\ov{p}}B_{j,\ov{t}}^{(n)}B^{(s)}_{i,\ov{p}+\ov{na_{ij}}}\\ \notag
& -[m+na_{ij}-1]_{i} q_i^{-e(2m+na_{ij}-1)}
\sum_{u\geq0}(q_i\tk_i)^{u}\sum_{\stackrel{r+s+2u=m-1}{ \ov{r}=\ov{p}+\ov{1} }}(-1)^r\\ \notag
& \quad \quad \quad \quad \cdot
q_i^{-e(m+na_{ij}-2)(r+u-1)}\qbinom{\frac{m+na_{ij}-1}{2}}{u-1
}_{q_i^2}B^{(r)}_{i,\ov{p}}B_{j,\ov{t}}^{(n)}B^{(s)}_{i,\ov{p}+\ov{na_{ij}}}\\ \notag
=&
[m+1]_{i} \sum_{u\geq0}(q_i\tk_i)^u\sum_{\stackrel{ r+s+2u=m+1}{ \ov{r}=\ov{p}+\ov{1} }}(-1)^r
q_i^{-e(m+na_{ij})(r+u)}\qbinom{\frac{m+na_{ij}+1}{2}}{u}_{q_i^2}B^{(r)}_{i,\ov{p}}B_{j,\ov{t}}^{(n)}B^{(s)}_{i,\ov{p}+\ov{na_{ij}}}\\ \notag
& -[m+na_{ij}-1]_{i} q_i^{-e(2m+na_{ij}-1)}
\sum_{u\geq0}(q_i\tk_i)^{u+1}\sum_{\stackrel{r+s+2u=m-1}{ \ov{r}=\ov{p}+\ov{1} }}(-1)^r\\ \notag
& \quad \quad \quad \quad \cdot
q_i^{-e(m+na_{ij}-2)(r+u)}\qbinom{\frac{m+na_{ij}-1}{2}}{u}_{q_i^2}B^{(r)}_{i,\ov{p}}B_{j,\ov{t}}^{(n)}B^{(s)}_{i,\ov{p}+\ov{na_{ij}}}.
\end{align}

Plugging \eqref{eqn:h'14}--\eqref{eqn:h'od2356} into \eqref{eq:qcomm2} and then using \eqref{eq:m-aodd}--\eqref{eq:m-aeven}, we obtain
\begin{align*}
q^{-e(2m+na_{ij})} & B_i \tf_{i,j;n,m,\ov{p},\ov{t},e}-\tf_{i,j;n,m,\ov{p},\ov{t},e}B_i\\
=-&[m+1]_{i} \tf_{i,j;n,m+1,\ov{p},\ov{t},e}
+[m+na_{ij}-1]_{i} q_i^{1-e(2m+na_{ij}-1)} \tk_i \tf_{i,j;n,m-1,\ov{p},\ov{t},e}.
\end{align*}
The proof of Theorem~\ref{thm:recursion} is completed.

%

\appendix

\section{More reductions from Serre-Lusztig}
    \label{App:A}

In this appendix, we outline the proofs of the identities \eqref{eq:serre11odd}--\eqref{eq:serre11evenodd}, which are modeled on the proof of \eqref{eq:serre11F} in \S\ref{subsec:proof1}--\ref{subsec:evev}.


\subsection{Proof of the identity \eqref{eq:serre11odd} }

Let $\alpha =-a_{12}$ as before. We shall use (\ref{t2mdot2})--(\ref{t2m-1dot2}) to rewrite the element
\begin{align}
\label{eq:Serre-idem2}
\sum_{r=0}^{\alpha(\mu+\nu+2\beta)+1} (-1)^r  B_{1,\odd}^{(r)}E_2^\mu K_2^{-(\mu+2\beta)}F_2^\nu  B_{1,\odd}^{(\alpha(\mu+\nu+2\beta)+1-r)}\onestar_{2\la-1} \in \Udot
\end{align}
for any $\la\in\Z$ in terms of monomials in $E_1, F_1, E_2, F_2, \tK_2^{-1}$.

Similar to \eqref{eq:evev}, we obtain the following formula:
\begin{align*}
& \sum_{r=0}^{\alpha(\mu+\nu+2\beta)+1}
 (-1)^r B_{1,\odd}^{(r)}E_2^\mu K_2^{-(\mu+2\beta)}F_2^\nu B_{1,\odd}^{(\alpha(\mu+\nu+2\beta)+1-r)}\onestar_{2\la-1} =A_1 -A_2
 \end{align*}
 where
\begin{align*}
A_1& =\sum_{r=0,2\mid r}^{\alpha(\mu+\nu+2\beta)+1}\sum_{c=0}^{\frac{\alpha}{2}(\mu+\nu+2\beta)-\frac{r}{2}}\sum_{e=0}^{\frac{r}{2}} \sum_{a=0}^{\alpha(\mu+\nu+2\beta)+1-r-2c}\sum_{d=0}^{r-2e}\sum^{\min\{a,r-2e-d\}}_{b=0}
\\
& q_1^{(a+c+d+e)(\alpha(\mu+\nu+2\beta)+3-r-2\la-2a-2c-2d-2e)+2\alpha(\mu+\beta)(d+e)-a-2c}q_1^{\alpha(\mu+2\beta)(r+a-b-2e-d)}\notag\\
& \cdot
\qbinom{2\alpha\mu +\alpha\nu+4\alpha\beta+3-2e-d-3a-2\la-4c-r}{b}_{q_1}  \notag \\
&  \cdot  \qbinom{\frac{\alpha}{2}(\mu+\nu+2\beta)-\frac{r}{2}-c-a-\la+1}{c}_{q_1^2}
\qbinom{\frac{3\alpha\mu}{2} +\frac{\alpha\nu}{2}+2\alpha\beta+1-e-d-\la-2a-\frac{r}{2}-2c}{e}_{q_1^2}
\notag \\
&  \cdot E_1^{(d)}E_2^\mu E_1^{(a-b)} K_2^{-(\mu+2\beta)}F_1^{(r-2e-d-b)}F_2^\nu F_1^{(\alpha(\mu+\nu+2\beta)+1-r-2c-a)}, \notag
 \end{align*}
and %
\begin{align*}
A_2 &=  \sum_{r=1,2\nmid r}^{\alpha(\mu+\nu+2\beta)+1}\sum_{c=0}^{\frac{\alpha}{2}(\mu+\nu+2\beta)+\frac{1-r}{2}} \sum_{e=0}^{\frac{r-1}{2}}
\sum_{a=0}^{\alpha+1-r-2c}\sum_{d=0}^{r-2e}\sum^{\min\{a,r-2e-d\}}_{b=0}
\notag\\
& q_1^{(a+c+d+e)(\alpha(\mu+\nu+2\beta)+2-r-2\la-2a-2c-2d-2e)+d+2\alpha(\mu+\beta)(d+e)} q_1^{\alpha(\mu+2\beta)(r+a-b-2e-d)} \notag \\
&   \cdot
\qbinom{2\alpha\mu +\alpha\nu+4\alpha\beta+3-2e-d-3a-2\la-4c-r}{b}_{q_1} \notag\\
&  \cdot
\qbinom{\frac{\alpha}{2}(\mu+\nu+2\beta)+\frac{1-r}{2}-c-a-\la}{c}_{q_1^2}
\qbinom{\frac{3\alpha\mu}{2} +\frac{\alpha\nu}{2}+2\alpha\beta+1-e-d-\la-2a-\frac{r-1}{2}-2c}{e}_{q_1^2}
\notag \\
&  \cdot  E_1^{(d)}E_2^\mu E_1^{(a-b)} K_2^{-(\mu+2\beta)}F_1^{(r-2e-d-b)}F_2^\nu F_1^{(\alpha(\mu+\nu+2\beta)+1-r-2c-a)}\onestar_{2\la-1}. \notag
\end{align*}

Let $l:=a+d-b$, $y:=r-2e-d-b$, $u:=c+e+b$, and $w:=\alpha(\mu+\nu+2\beta)+3-2\lambda-4u-2l-y+d$.
Using these new variables and $T(w,u,l,\mu,\beta)$ in \eqref{eq:T1jfix}, the above identity can be rewritten in the following form (analogous to \eqref{eq:evev3jfix} at the end of \S\ref{subsec:proof1})
\begin{align}
\label{eq:oddodd3jfix}
& \sum_{r=0}^{\alpha(\mu+\nu+2\beta)+1}
(-1)^r    B_{1,\odd}^{(r)}E_2^\mu K_2^{-(\mu+2\beta)}F_2^\nu B_{1,\odd}^{(\alpha(\mu+\nu+2\beta)+1-r)}\onestar_{2\la-1} \\ \notag
& =-\sum_{u=0}^{\frac{\alpha}{2}(\mu+\nu+2\beta)}\sum_{l=0}^{\alpha(\mu+\nu+2\beta)+1-2u} \sum_{d=0}^{l} \sum_{y=0}^{\alpha(\mu+\nu+2\beta)+1-l-2u}
\\
&\qquad q_1^{d+2\alpha\beta(\alpha\mu +l+y)+\alpha\mu (\alpha(\mu+\nu)+3-2\la-4u-l+2d)+(l+u)(u-2d-1)+l u}T(w,u,l,\mu,\beta)  \notag\\
&\qquad \cdot E_1^{(d)}E_2^\mu E_1^{(l-d)} K_2^{-(\mu+2\beta)}F_1^{(y)}F_2^\nu F_1^{(\alpha(\mu+\nu+2\beta)+1-l-y-2u)}\onestar_{2\la-1}. \notag
\end{align}
From now on, following the same proof for \eqref{eq:serre11F} as in \S\ref{subsec:ReductionGH}, we establish the identity \eqref{eq:serre11odd}; the details are omitted here.

\subsection{Proof of the identity \eqref{eq:serre11oddeven}}


Let $\alpha =-a_{12}$ as before. We shall use \eqref{t2mdot}--\eqref{t2m-1dot2} to rewrite the element
\begin{align}
\label{eq:Serre-idem2}
\sum_{r=0}^{\alpha(\mu+\nu+2\beta)+1} (-1)^r  B_{1,\odd}^{(r)}E_2^\mu K_2^{-(\mu+2\beta)}F_2^\nu  B_{1,\ev}^{(\alpha(\mu+\nu+2\beta)+1-r)}\onestar_{2\la} \in \Udot
\end{align}
for any $\la\in\Z$ in terms of monomials in $E_1, F_1, E_2, F_2, \tK^{-1}_2$.

Similar to \eqref{eq:evev}, we obtain the following formula:
\begin{align*}
& \sum_{r=0}^{\alpha(\mu+\nu+2\beta)+1}
 (-1)^r B_{1,\odd}^{(r)}E_2^\mu K_2^{-(\mu+2\beta)}F_2^\nu B_{1,\ev}^{(\alpha(\mu+\nu+2\beta)+1-r)}\onestar_{2\la} =C_1 -C_2
  \end{align*}
 where
\begin{align*}
C_1 & =\sum_{r=0,2\mid r}^{\alpha(\mu+\nu+2\beta)+1}\sum_{c=0}^{\frac{\alpha(\mu+\nu+2\beta)+1-r}{2}}\sum_{e=0}^{\frac{r}{2}} \sum_{a=0}^{\alpha(\mu+\nu+2\beta)+1-r-2c}\sum_{d=0}^{r-2e}\sum^{\min\{a,r-2e-d\}}_{b=0}
\\
&q_1^{(a+c+d+e)((\alpha(\mu+\nu+2\beta)+2-r-2\la-2a-2c-2d-2e)+2\alpha(\mu+\beta)(d+e)-a-2c}q_1^{\alpha(\mu+2\beta)(r+a-b-2e-d)}\notag\\
&\cdot
\qbinom{2\alpha\mu+\alpha\nu+4\alpha\beta+2-2e-d-3a-2\la-4c-r}{b}_{q_1}\notag\\
&  \cdot  \qbinom{\frac{\alpha(\mu+\nu+2\beta)+1-r}{2}-c-a-\la}{c}_{q_1^2}
\qbinom{\frac{3\alpha\mu+\alpha\nu+1}{2}+2\alpha\beta-e-d-\la-2a-\frac{r}{2}-2c}{e}_{q_1^2}
\notag \\
&  \cdot  E_1^{(d)}E_2^\mu E_1^{(a-b)} K_2^{-(\mu+2\beta)}F_1^{(r-2e-d-b)}F_2^\nu F_1^{(2\alpha(\mu+\nu+2\beta)+1-r-2c-a)},
  \end{align*}
and %
\begin{align*}
C_2&= \sum_{r=1,2\nmid r}^{\alpha(\mu+\nu+2\beta)+1}\sum_{c=0}^{\frac{\alpha(\mu+\nu+2\beta)-r}{2}}  \sum_{e=0}^{\frac{r-1}{2}}
\sum_{a=0}^{\alpha(\mu+\nu+2\beta)+1-r-2c} \sum_{d=0}^{r-2e}\sum^{\min\{a,r-2e-d\}}_{b=0}
\notag\\
&q_1^{(a+c+d+e)(\alpha(\mu+\nu+2\beta)+1-r-2\la-2a-2c-2d-2e)+d+2\alpha(\mu+\beta)(d+e)} q_1^{\alpha(\mu+2\beta)(r+a-b-2e-d)} \notag \\
&   \cdot
\qbinom{2\alpha\mu+\alpha\nu+4\alpha\beta+2-2e-d-3a-2\la-4c-r}{b}_{q_1} \notag \\
&   \cdot \qbinom{\frac{\alpha(\mu+\nu+2\beta)-r}{2}-c-a-\la}{c}_{q_1^2}
\qbinom{\frac{3\alpha\mu+\alpha\nu+1}{2}+2\alpha\beta-e-d-\la-2a-\frac{r-1}{2}-2c}{e}_{q_1^2}
  \notag\\
&\cdot E_1^{(d)}E_2^\mu K_2^{-(\mu+2\beta)}E_1^{(a-b)}F_1^{(r-2e-d-b)}F_2^\nu F_1^{(\alpha(\mu+\nu+2\beta)+1-r-2c-a)}\onestar_{2\la}. \notag
\end{align*}

Let $l:=a+d-b$, $y:=r-2e-d-b$, $u:=b+c+e$, and $w:=\alpha(\mu+\nu+2\beta)+2-2\lambda-4u-2l-y+d$.
Using these new variables and $T(w,u,l,\mu,\beta)$ in \eqref{eq:T1jfix}, the above identity can be rewritten in the following form (analogous to \eqref{eq:evev3jfix} at the end of \S\ref{subsec:proof1})
\begin{align}
\label{eq:oddeven3jfix}
& \sum_{r=0}^{\alpha(\mu+\nu+2\beta)+1}
(-1)^r    B_{1,\odd}^{(r)}E_2^\mu K_2^{-(\mu+2\beta)}F_2^\nu B_{1,\ev}^{(2\alpha(\mu+\nu+2\beta)+1-r)}\onestar_{2\la} \\ \notag
& =-\sum_{u=0}^{\alpha(\mu+\nu+2\beta)+1}\sum_{l=0}^{\alpha(\mu+\nu+2\beta)+1-2u} \sum_{d=0}^{l} \sum_{y=0}^{\alpha(\mu+\nu+2\beta)+1-l-2u}
\\
&\qquad q_1^{d+2\alpha\beta(\alpha\mu+l+y)+\alpha\mu(\alpha(\mu+\nu)+2-2\la-4u-l+2d)+(l+u)(u-2d-1)+l u}T(w,u,l,\mu,\beta)  \notag\\
&\qquad \cdot E_1^{(d)}E_2^\mu E_1^{(l-d)} K_2^{-(\mu+2\beta)}F_1^{(y)}F_2^\nu F_1^{(2\alpha(\mu+\nu+2\beta)+1-l-y-2u)}\onestar_{2\la}. \notag
\end{align}
From now on, following the same proof for \eqref{eq:serre11F} as in \S\ref{subsec:ReductionGH}, we establish the identity \eqref{eq:serre11oddeven}; the details are omitted here.

\subsection{Proof of the identity \eqref{eq:serre11evenodd} }

Let $\alpha =-a_{12}$ as before. We shall use \eqref{t2mdot}--\eqref{t2m-1dot2} to rewrite the element
\begin{align}
\sum_{r=0}^{\alpha(\mu+\nu+2\beta)+1} (-1)^r  B_{1,\ev}^{(r)}E_2^\mu K_2^{-(\mu+2\beta)}F_2^\nu  B_{1,\odd}^{(\alpha(\mu+\nu+2\beta)+1-r)}\onestar_{2\la-1} \in \Udot
\end{align}
for any $\la\in\Z$ in terms of monomials in $E_1, F_1, E_2, F_2, \tK^{-1}_2$.

Similar to \eqref{eq:evev}, we obtain the following formula:
\begin{align*}
& \sum_{r=0}^{\alpha(\mu+\nu+2\beta)+1}
 (-1)^r B_{1,\ev}^{(r)}E_2^\mu K_2^{-(\mu+2\beta)}F_2^\nu B_{1,\odd}^{(\alpha(\mu+\nu+2\beta)+1-r)}\onestar_{2\la-1} = D_1 -D_2
\end{align*}
 where
\begin{align*}
D_1
& =\sum_{r=0,2\mid r}^{\alpha(\mu+\nu+2\beta)+1}\sum_{c=0}^{\frac{\alpha(\mu+\nu+2\beta)+1-r}{2}}\sum_{e=0}^{\frac{r}{2}} \sum_{a=0}^{\alpha(\mu+\nu+2\beta)+1-r-2c}\sum_{d=0}^{r-2e}\sum^{\min\{a,r-2e-d\}}_{b=0}
\\
&q_1^{(a+c+d+e)(\alpha(\mu+\nu+2\beta)+2-r-2\la-2a-2c-2d-2e)+2\alpha(s+\beta)(d+e)+d}q_1^{\alpha(s+2\beta)(r+a-b-2e-d)}\notag\\
&\cdot
\qbinom{2\alpha\mu+\alpha\nu+4\alpha\beta+3-2e-d-3a-2\la-4c-r}{b}_{q_1}\notag\\
&  \cdot  \qbinom{\frac{\alpha(\mu+\nu+2\beta)+1-r}{2}-c-a-\la}{c}_{q_1^2}
\qbinom{\frac{3\alpha\mu+\alpha\nu+1}{2}+2\alpha\beta+1-e-d-\la-2a-\frac{r}{2}-2c}{e}_{q_1^2}
\notag\\
& \cdot E_1^{(d)}E_2^\mu E_1^{(a-b)} K_2^{-(\mu+2\beta)}F_1^{(r-2e-d-b)}F_2^\nu F_1^{(\alpha(\mu+\nu+2\beta)+1-r-2c-a)},
  \end{align*}
and %
\begin{align*}
D_2 &= \sum_{r=1,2\nmid r}^{\alpha(\mu+\nu+2\beta)+1}\sum_{c=0}^{\frac{\alpha(\mu+\nu+2\beta)-r}{2}}  \sum_{e=0}^{\frac{r-1}{2}}
\sum_{a=0}^{\alpha(\mu+\nu+2\beta)+1-r-2c} \sum_{d=0}^{r-2e}\sum^{\min\{a,r-2e-d\}}_{b=0}
\notag\\
&q_1^{(a+c+d+e)(\alpha(\mu+\nu+2\beta)+3-r-2\la-2a-2c-2d-2e)-a-2c+2\alpha(s+\beta)(d+e)} q_1^{\alpha(s+2\beta)(r+a-b-2e-d)} \notag \\
&   \cdot
\qbinom{2\alpha\mu+\alpha\nu+4\alpha\beta+3-2e-d-3a-2\la-4c-r}{b}_{q_1}\notag\\
&   \cdot \qbinom{\frac{\alpha(\mu+\nu+2\beta)-r}{2}-c-a-\la+1}{c}_{q_1^2}
\qbinom{\frac{3\alpha\mu+\alpha\nu+1}{2}+2\alpha\beta-\frac{r-1}{2}-e-d-\la-2a-2c}{e}_{q_1^2}
  \notag\\
&\cdot E_1^{(d)}E_2^\mu K_2^{-(\mu+2\beta)}E_1^{(a-b)}F_1^{(r-2e-d-b)}F_2^\nu F_1^{(\alpha(\mu+\nu+2\beta)+1-r-2c-a)}\onestar_{2\la-1}. \notag
\end{align*}

Let $l:=a+d-b$, $y:=r-2e-d-b$, $u:=b+c+e$, and $w:=\alpha(\mu+\nu+2\beta)+3-2\lambda-4u-2l-y+d$.
Using these new variables and $T(w,u,l,\mu,\beta)$ in \eqref{eq:T1jfix}, the above identity can be rewritten in the following form (analogous to \eqref{eq:evev3jfix} at the end of \S\ref{subsec:proof1})
\begin{align}
\label{eq:oddeven3jfix}
& \sum_{r=0}^{\alpha(\mu+\nu+2\beta)+1}
(-1)^r    B_{1,\ev}^{(r)}E_2^s K_2^{-(\mu+2\beta)}F_2^\nu B_{1,\odd}^{(2\alpha(\mu+\nu+2\beta)+1-r)}\onestar_{2\la-1} \\ \notag
& =\sum_{u=0}^{\alpha(\mu+\nu+2\beta)+1}\sum_{l=0}^{\alpha(\mu+\nu+2\beta)+1-2u} \sum_{d=0}^{l} \sum_{y=0}^{\alpha(\mu+\nu+2\beta)+1-l-2u}
\\
&\qquad q_1^{d+2\alpha\beta(\alpha\mu+l+y)+\alpha\mu(\alpha(\mu+\nu)+3-2\la-4u-l+2d)+(l+u)(u-2d-1)+l u} T(w,u,l,\mu,\beta) \notag\\
&\qquad \cdot E_1^{(d)}E_2^\mu E_1^{(l-d)} K_2^{-(\mu+2\beta)}F_1^{(y)}F_2^\nu F_1^{(2\alpha(\mu+\nu+2\beta)+1-l-y-2u)}\onestar_{2\la-1}. \notag
\end{align}
From now on, following the same proof for \eqref{eq:serre11F} as in \S\ref{subsec:ReductionGH}, we establish the identity \eqref{eq:serre11evenodd}; the details are omitted here.

\section{Some $q$-binomial combinatorial formulas}
  \label{App:B}

In this appendix, we establish two technical $q$-binomial combinatorial formulas, which are used in the proof of Serre-Lusztig relations in \S\ref{subsec:even}--\ref{subsec:odd}.

Recall the quantum binomial identity
\begin{align}
  \label{eq:qbi}
\qbinom{k+1}{u}_{q^2}=q^{-2eu}\qbinom{k}{u}_{q^2}+q^{e(2k-2u+2)}\qbinom{k}{u-1}_{q^2}.
\end{align}

\begin{lem} \label{lem:eqn1}
For any $-a,m,r,s,u\in \bbZ_{\geq 0}$ such that $r+s+2u=m+1$ and $m+na$ is even, we have
\begin{align}\label{eq:eqn1}
&q^{-e(2m+na)}[r]\qbinom{\frac{m+na}{2}}{u}_{q^2}
+
q^{-e(m+na-1)}[s]\qbinom{\frac{m+na-2}{2}}{u}_{q^2}
+[s+1]\qbinom{\frac{m+na-2}{2}}{u-1}_{q^2}\\ \notag
=& q^{-e(m+na+r-1)}[m+1]\qbinom{\frac{m+na}{2}}{u}_{q^2}-q^{-e(2m+na+1-r-2u)}[m+na-1]\qbinom{\frac{m+na-2}{2}}{u-1}_{q^2}.
\notag
\end{align}
\end{lem}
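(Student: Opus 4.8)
The plan is to prove \eqref{eq:eqn1} as a scalar identity in $\K(q)$ by reducing it, via the single relation \eqref{eq:qbi}, to an elementary identity among $q$-integers. First I would use the constraint $r+s+2u=m+1$ to eliminate $s$, writing $s=m+1-r-2u$ so that every $q$-power and every bracket on both sides depends only on $m$, $na$, $r$, $u$. Three Gaussian binomials occur, namely $\qbinom{\frac{m+na}{2}}{u}_{q^2}$, $\qbinom{\frac{m+na-2}{2}}{u}_{q^2}$ and $\qbinom{\frac{m+na-2}{2}}{u-1}_{q^2}$, and I would apply \eqref{eq:qbi} with $k=\frac{m+na-2}{2}$ to replace the first of these by a combination of the other two. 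After this substitution both sides of \eqref{eq:eqn1} are $\K(q)$-linear combinations of the two lower binomials $X:=\qbinom{\frac{m+na-2}{2}}{u}_{q^2}$ and $Y:=\qbinom{\frac{m+na-2}{2}}{u-1}_{q^2}$.

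Writing the resulting expressions as $\mathrm{LHS}=A_XX+A_YY$ and $\mathrm{RHS}=B_XX+B_YY$, the identity \eqref{eq:eqn1} becomes $(A_X-B_X)X=(B_Y-A_Y)Y$. A short (if fiddly) computation with the brackets, using $s=m+1-r-2u$, should give the closed forms $A_X-B_X=-q^{-e(2m+na-r)}[2u]$ and $B_Y-A_Y=-q^{-e(2m+na-r)}[m+na-2u]$. These are both nonzero, so one cannot match the coefficients of $X$ and $Y$ separately; the point is precisely that $X$ and $Y$ are not independent. I would therefore invoke the ratio form of the Gaussian binomial, $X=\dfrac{[\frac{m+na}{2}-u]_{q^2}}{[u]_{q^2}}\,Y$, valid for an indeterminate $q$ and for the range of $u$ allowed by $2u\le m+1$, so that the claim collapses to the single scalar identity
\[
[2u]\,\frac{[\tfrac{m+na}{2}-u]_{q^2}}{[u]_{q^2}}=[m+na-2u].
\]

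This last identity is where the comparison between $q$- and $q^2$-integers is used: from $[2u]=[2]\,[u]_{q^2}$ one has $[2u]/[u]_{q^2}=[2]$, and then $[2]\,[\frac{m+na}{2}-u]_{q^2}=[m+na-2u]$ because $q^2-q^{-2}=(q-q^{-1})(q+q^{-1})$. This settles the case $e=1$, and $e=-1$ follows by the same bookkeeping, the two coefficient formulas above being already uniform in $e$.

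I expect the main obstacle to be organizational rather than conceptual: carefully tracking the numerous $q$-power exponents through the expansion by \eqref{eq:qbi} and the substitution for $s$, and confirming the two closed forms for $A_X-B_X$ and $B_Y-A_Y$, each of which only telescopes after several cancellations. If one prefers to avoid the proportionality step (and the mild genericity it uses), an equivalent route is to clear the denominator $[u]_{q^2}$ everywhere and verify the resulting Laurent-polynomial identity in $q$ directly; this is longer but entirely mechanical, and it handles the degenerate cases $u=0$ and $Y=0$ without separate comment.
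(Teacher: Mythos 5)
Your proposal is correct: I verified that the two closed forms $A_X-B_X=-q^{-e(2m+na-r)}[2u]$ and $B_Y-A_Y=-q^{-e(2m+na-r)}[m+na-2u]$ do hold, and your final scalar reduction is exactly the absorption identity $[2u]\qbinom{\frac{m+na-2}{2}}{u}_{q^2}=[m+na-2u]\qbinom{\frac{m+na-2}{2}}{u-1}_{q^2}$ that the paper also uses (implicitly, mid-computation). This is essentially the paper's proof with different bookkeeping: the paper likewise expands $\qbinom{\frac{m+na}{2}}{u}_{q^2}$ via \eqref{eq:qbi}, extracts $[m+1]$ from the coefficients using $r+s+2u=m+1$, and folds the remainder into $[m+na-1]$ by the same ratio relation, transforming the left side into the right rather than comparing coefficients of $X$ and $Y$.
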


\begin{proof}
For $u=0$, the desired identity \eqref{eq:eqn1} can be verified directly.

Let $u\geq 1$. Using the quantum binomial identity \eqref{eq:qbi} we have
\begin{align*}
 \text{LHS}\, \eqref{eq:eqn1}
=&q^{-e(2m+na)}[r]\qbinom{\frac{m+na}{2}}{u}_{q^2}+
 q^{-e(m+na-1)}[s]\qbinom{\frac{m+na-2}{2}}{u}_{q^2}
+[s+1]\qbinom{\frac{m+na-2}{2}}{u-1}_{q^2}\\
=&
q^{-e(2m+na)}[r]\left(q^{-2eu}\qbinom{\frac{m+na-2}{2}}{u}_{q^2}
+q^{e(m+na-2u)}\qbinom{\frac{m+na-2}{2}}{u-1}_{q^2}\right)\\
&+q^{-e(m+na-1)}[s]\qbinom{\frac{m+na-2}{2}}{u}_{q^2}
+[s+1]\qbinom{\frac{m+na-2}{2}}{u-1}_{q^2}\\
=&\left(q^{-e(2m+na+2u)}[r]+q^{-e(m+na-1)}[s]  \right)\qbinom{\frac{m+na-2}{2}}{u}_{q^2}
\\
& +\left( q^{-e(m+2u)}[r]+[s+1] \right)\qbinom{\frac{m+na-2}{2}}{u-1}_{q^2},
\end{align*}
which can be rewritten, thanks to  $r+s+2u=m+1$,  as
\begin{align*}
=&\left(q^{-e(m+na-1+r+2u)}[m+1]-q^{-e(2m+na-r)}[2u]\right)\qbinom{\frac{m+na-2}{2}}{u}_{q^2}\\
&+\left( q^{-e(2u+r-1)}[m+1]-q^{-e(m+1-r)}[2u-1] \right)\qbinom{\frac{m+na-2}{2}}{u-1}_{q^2}\\
=&[m+1]\left(q^{-e(m+na-1+r+2u)}\qbinom{\frac{m+na-2}{2}}{u}_{q^2}+ q^{-e(2u+r-1)}\qbinom{\frac{m+na-2}{2}}{u-1}_{q^2} \right)\\
&-\left(q^{-e(2m+na-r)}[2u]\qbinom{\frac{m+na-2}{2}}{u}_{q^2}+ q^{-e(m+1-r)}[2u-1] \qbinom{\frac{m+na-2}{2}}{u-1}_{q^2}\right)\\
=&q^{-e(m+na-1+r)}[m+1]\left(q^{-2eu}\qbinom{\frac{m+na-2}{2}}{u}_{q^2}+ q^{-e(-m-na+2u)}\qbinom{\frac{m+na-2}{2}}{u-1}_{q^2}\right)\\
&-q^{-e(m+1-r)}\left(q^{-e(m+na-1)}[m+na-2u]+[2u-1]\right)\qbinom{\frac{m+na-2}{2}}{u-1}_{q^2}
=\text{RHS}\, \eqref{eq:eqn1}.
\end{align*}
The lemma is proved.
\end{proof}

\begin{lem}
\label{lem:eqn2}
For any $-a,m,r,s,u\in \bbZ_{\geq 0}$ such that $r+s+2u=m+1$ and $m+na$ is odd, we have
\begin{align}\label{eq:eqn2}
&q^{-e(2m+na-1)}[r]\qbinom{\frac{m+na-1}{2}}{u}_{q^2}
+q^{-e(2u+m+na-2)}[s]\qbinom{\frac{m+na-1}{2}}{u}_{q^2}\\ \notag
&\quad \quad \quad +q^{-e(3m+2na-1)}[r+1]\qbinom{\frac{m+na-1}{2}}{u-1}_{q^2}
+q^{-e(2u-2)}[s+1]\qbinom{\frac{m+na-1}{2}}{u-1}_{q^2}\\ \notag
 = & q^{-e(m+na+2u+r-2)}[m+1]\qbinom{\frac{m+na+1}{2}}{u}_{q^2}
 -q^{-e(2m+na-r-1)}[m+na-1]\qbinom{\frac{m+na-1}{2}}{u-1}_{q^2}.
 \notag
\end{align}

\end{lem}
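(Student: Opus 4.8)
The plan is to prove Lemma~\ref{lem:eqn2} by the same mechanism that drives Lemma~\ref{lem:eqn1}: repeated use of the Pascal-type recursion \eqref{eq:qbi} to reduce every $q^2$-binomial on the left-hand side to the two ``building blocks'' $\qbinom{\frac{m+na-1}{2}}{u}_{q^2}$ and $\qbinom{\frac{m+na-1}{2}}{u-1}_{q^2}$, followed by collecting coefficients and recognizing standard $q$-integer identities. First I would dispose of the base case $u=0$ by direct substitution, exactly as in Lemma~\ref{lem:eqn1}: when $u=0$ all binomials with lower index $u-1$ vanish, $r+s=m+1$, and the claimed identity collapses to a short relation among $[r],[s],[m+1]$ that follows from $[r]+q^{-e(\cdots)}[s]$-type manipulations. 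This isolates the genuinely inductive content into the case $u\ge 1$.

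For $u\ge 1$ the key observation is that the target right-hand side already carries binomials of \emph{different} upper indices, namely $\qbinom{\frac{m+na+1}{2}}{u}_{q^2}$ and $\qbinom{\frac{m+na-1}{2}}{u-1}_{q^2}$, whereas the left-hand side uses only upper index $\frac{m+na-1}{2}$. So the first step is to expand the right-hand side rather than the left: applying \eqref{eq:qbi} (with $k=\frac{m+na-1}{2}$) to $\qbinom{\frac{m+na+1}{2}}{u}_{q^2}$ rewrites it in terms of $\qbinom{\frac{m+na-1}{2}}{u}_{q^2}$ and $\qbinom{\frac{m+na-1}{2}}{u-1}_{q^2}$. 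After this both sides are linear combinations of the same two binomials, and the identity reduces to matching the scalar coefficients of each. Using $s=m+1-r-2u$ to eliminate $s$, the coefficient of $\qbinom{\frac{m+na-1}{2}}{u}_{q^2}$ becomes a $q$-integer identity of the shape
\[
q^{-e(2m+na-1)}[r]+q^{-e(2u+m+na-2)}[s]
= q^{-e(m+na+2u+r-2)}[m+1]\,q^{-2eu}-(\text{correction}),
\]
which one verifies by expanding the $[\cdot]$'s into $q$-powers and simplifying, precisely the computation carried out in the displayed chain of Lemma~\ref{lem:eqn1}. The coefficient of $\qbinom{\frac{m+na-1}{2}}{u-1}_{q^2}$ yields a second $q$-integer identity involving $[r+1],[s+1],[m+1]$ and $[m+na-1]$, again handled by the same bookkeeping together with the standard splitting $[a+b]=q^{eb}[a]+q^{-ea}[b]$ and $[m+na-1]=[m+na-2u]+q^{\pm e(\cdots)}[2u-1]$.

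The main obstacle is purely in the bookkeeping of $q$-exponents: because $m+na$ is \emph{odd} here (in contrast to the even case of Lemma~\ref{lem:eqn1}), the floor/ceiling shifts in the upper indices are off by one, and the four left-hand terms carry distinct prefactors $q^{-e(2m+na-1)}, q^{-e(2u+m+na-2)}, q^{-e(3m+2na-1)}, q^{-e(2u-2)}$ that must be reconciled with the two right-hand prefactors. The danger is an arithmetic slip in tracking these exponents through the substitution $s=m+1-r-2u$; I expect the cleanest route is to factor out a common $q$-power from each target coefficient first (mirroring the step in Lemma~\ref{lem:eqn1} where $q^{-e(m+na-1+r)}[m+1]$ and $q^{-e(m+1-r)}$ are pulled out before the final application of \eqref{eq:qbi}), so that the residual identities are manifestly the symmetric $q$-integer relations $q^{-2eu}[m+na-2u]\cdot q^{e(\cdots)}+[2u-1]=q^{-e(\cdots)}[m+na-1]$. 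Once the two coefficient identities are checked, \eqref{eq:eqn2} follows, completing the proof.
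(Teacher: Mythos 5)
Your base case $u=0$ and your general toolkit (the Pascal recursion \eqref{eq:qbi} plus $q$-integer splittings) are in line with the paper's proof, but the central reduction you propose for $u\ge 1$ does not go through. After expanding $\qbinom{\frac{m+na+1}{2}}{u}_{q^2}$ once via \eqref{eq:qbi}, the coefficients of $\qbinom{\frac{m+na-1}{2}}{u}_{q^2}$ on the two sides of \eqref{eq:eqn2} are
\[
q^{-e(2m+na-1)}[r]+q^{-e(2u+m+na-2)}[s]
\qquad\text{and}\qquad
q^{-e(m+na+4u+r-2)}[m+1],
\]
and these are \emph{not} equal: after clearing the common $q$-power and using $r+s+2u=m+1$, the required scalar identity reads $q^{-e(s-2u)}[r]+q^{e(2u+r)}[s]=[r+s+2u]$, which already fails for $e=1$, $r=1$, $s=2$, $u=1$ (left side $1+q^2+q^4$, right side $[5]$). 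Since the two binomials are specific rational functions rather than independent symbols, the truth of \eqref{eq:eqn2} does not force the coefficients to match in your chosen basis, and in fact they do not; so ``matching the scalar coefficients of each'' is not a valid reduction, and there is no source for the ``(correction)'' term in your displayed coefficient identity once the right-hand side has been expanded. The proof stalls exactly here.

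The missing ingredient is the mechanism that transfers the leftover $[2u]$-proportional part of a coefficient from one binomial to the other, namely the ratio identity $[2u]\,\qbinom{k+1}{u}_{q^2}=[2k+2]\,\qbinom{k}{u-1}_{q^2}$, applied with $k+1=\frac{m+na+1}{2}$ so that $[2u]\qbinom{\frac{m+na+1}{2}}{u}_{q^2}=[m+na+1]\qbinom{\frac{m+na-1}{2}}{u-1}_{q^2}$. The paper works from the left-hand side: it splits the prefactors of the two $u$-indexed terms so as to apply \eqref{eq:qbi} with parameter $-e$ to the $[r]$-term but with parameter $+e$ to the $[s]$-term (this asymmetric choice is essential), writes the resulting coefficient of $\qbinom{\frac{m+na+1}{2}}{u}_{q^2}$ as $q^{-e(m+na+2u+r-2)}[m+1]-q^{-e(2m+na-r-1)}[2u]$, converts the $[2u]$ term into a $\qbinom{\frac{m+na-1}{2}}{u-1}_{q^2}$ contribution by the ratio identity, and only then combines everything using the $q$-integer identity \eqref{eq:qq} and $[m+na+1]-(q^{m+na}+q^{-m-na})=[m+na-1]$. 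You would need to add both ingredients (the $\pm e$ asymmetry and the binomial ratio identity) for your plan to close.
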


\begin{proof}
For $u=0$, the desired identity \eqref{eq:eqn2} can be verified directly.

Let $u\geq 1$. Using the quantum binomial identity \eqref{eq:qbi} we have
\begin{align}\label{eqn:21}
& q^{-e(2m+na+2u-1)} [r] \left(q^{2eu}\qbinom{\frac{m+na-1}{2}}{u}_{q^2}\right)
+q^{-e(m+na-2)} [s] \left(q^{-2eu}\qbinom{\frac{m+na-1}{2}}{u}_{q^2}\right)
\\ \notag
 =& q^{-e(2m+na+2u-1)}  [r] \left(\qbinom{\frac{m+na+1}{2}}{u}_{q^2}-q^{-e(m+na-2u+1)}\qbinom{\frac{m+na-1}{2}}{u-1}_{q^2}\right)
 \\ \notag
& \quad
+q^{-e(m+na-2)} [s] \left(\qbinom{\frac{m+na+1}{2}}{u}_{q^2}-q^{-e(2u-m-na-1)}\qbinom{\frac{m+na-1}{2}}{u-1}_{q^2}\right)
\\ \notag
=& (q^{-e(2m+na+2u-1)}[r]+q^{-e(m+na-2)}[s])\qbinom{\frac{m+na+1}{2}}{u}_{q^2}
 \\
 &\notag
 \quad -(q^{-e(3m+2na)}[r]+q^{-e(2u-3)}[s])\qbinom{\frac{m+na-1}{2}}{u-1}_{q^2}
 \\ \notag
  =& q^{-e(m+na+2u+r-2)}[m+1]\qbinom{\frac{m+na+1}{2}}{u}_{q^2}
   -q^{-e(2m+na-r-1)}[2u]\qbinom{\frac{m+na+1}{2}}{u}_{q^2}
\\ \notag
&\quad -(q^{-e(3m+2na)}[r]+q^{-e(2u-3)}[s])\qbinom{\frac{m+na-1}{2}}{u-1}_{q^2}.
\end{align}

Recall $r+s+2u=m+1$. By a direct computation, we have
\begin{align}
 \label{eq:qq}
q^{-e(3m+2na-1)} & [r+1]
+q^{-e(2u-2)}[s+1]
-(q^{-e(3m+2na)}[r]+q^{-e(2u-3)}[s])\\
=&q^{-e(2m+na-r-1)}(q^{m+na}+q^{-m-na}).
\notag
\end{align}
Therefore, by \eqref{eqn:21}--\eqref{eq:qq} we have
\begin{align*}
\text{LHS}\, \eqref{eq:eqn2} =
&q^{-e(m+na+2u+r-2)}[m+1]\qbinom{\frac{m+na+1}{2}}{u}_{q^2}
-q^{-e(2m+na-r-1)}[2u]\qbinom{\frac{m+na+1}{2}}{u}_{q^2}
\\
& \quad +q^{-e(2m+na-r-1)}(q^{m+na}+q^{-m-na})
\qbinom{\frac{m+na-1}{2}}{u-1}_{q^2} \notag \\ \notag
=&q^{-e(m+na+2u+r-2)}[m+1]\qbinom{\frac{m+na+1}{2}}{u}_{q^2}
  \notag \\
  & \quad -q^{-e(2m+na-r-1)}[m+na+1]\qbinom{\frac{m+na-1}{2}}{u-1}_{q^2}
 \notag \\
  &\quad +q^{-e(2m+na-r-1)}(q^{m+na} +q^{-m-na}) \qbinom{\frac{m+na-1}{2}}{u-1}_{q^2}
\notag
= \text{RHS}\, \eqref{eq:eqn2}.
\end{align*}
This proves the lemma.
\end{proof}



\begin{thebibliography}{DW17a}\frenchspacing


\bibitem[BK19]{BK19}
M.~Balagovic and S.~Kolb,
{\em Universal $K$-matrix for quantum symmetric pairs}, J. Reine Angew. Math. {\bf 747} (2019), 299--353,
\href{http://arxiv.org/abs/1507.06276}{arXiv:1507.06276v2}

\bibitem[BS21]{BS21} H. Bao and T. Sale,
{\em Quantum symmetric pairs at roots of $1$}, Adv. Math. {\bf380} (2021), 107576,
\href{https://arxiv.org/abs/1910.04393}{arXiv:1910.04393}


\bibitem[BW18a]{BW18a} H. Bao and W. Wang,
{\em  A new approach to Kazhdan-Lusztig theory  of type $B$ via quantum symmetric pairs}, Ast\'erisque {\bf 402}, 2018, vii+134pp, 
\href{https://arxiv.org/abs/1310.0103}{arXiv:1310.0103}

\bibitem[BW18b]{BW18b} H. Bao and W. Wang,
{\em Canonical bases arising from quantum symmetric pairs}, Inventiones Math. {\bf 213}(3) (2018), 1099--1177, \href{https://arxiv.org/abs/1310.0103}{arXiv:1310.0103v2}

\bibitem[BW18c]{BW18c} H. Bao and W. Wang,
{\em Canonical bases arising from quantum symmetric pairs of Kac-Moody type},  \href{https://arxiv.org/abs/1811.09848}{arXiv:1811.09848}

\bibitem[BaK20]{BaK20} P. ~Baseilhac and S.~Kolb,
{\em Braid group action and root vectors for the $q$-Onsager algebra}, Transform. Groups {\bf 25} (2020),  363--389,
\href{https://arxiv.org/abs/1706.08747}{arXiv:1706.08747}



\bibitem[BaV14]{BaV14} P. Baseilhac and T.T. Vu, {\em Analogues of Lusztig's higher order relations for the q-Onsager algebra},
J. Math. Phys. {\bf 55} (2014) 081707, \href{https://arxiv.org/abs/1312.3433}{arXiv:1312.3433}

\bibitem[BaV15]{BaV15} P. Baseilhac and T.T. Vu,
{\em Higher order relations for ADE-type generalized q-Onsager algebras}, Lett. Math. Phys. {\bf 105} (2015) 1275--1288, \href{https://arxiv.org/abs/1312.5897}{arXiv:1312.5897}

\bibitem[BeW18]{BeW18} C. Berman and W. Wang, {\em Formulae of $\imath$-divided powers in ${\mathbf U}_q(\mathfrak{sl}_2)$}, J. Pure Appl. Algebra {\bf 222} (2018), 2667--2702,
\href{https://arxiv.org/abs/1703.00602}{arXiv:1703.00602}

\bibitem[CLW18]{CLW18} X. Chen, M. Lu, and W. Wang, {\em A Serre presentation of $\imath${}quantum groups}, Transform. Groups (to appear), https://doi.org/10.1007/s00031-020-09581-5, \href{https://arxiv.org/abs/1810.12475}{arxiv:1810.12475}




\bibitem[D19]{D19} L. Dobson,
{\em Braid group actions for quantum symmetric pairs of type AIII/AIV}, J. Algebra {\bf564} (2020), 151--198,
\href{https://arxiv.org/abs/1909.11215}{arxiv:1909.11215}

\bibitem[Ko14]{Ko14} S. Kolb, {\em Quantum symmetric Kac-Moody pairs}, Adv. Math. {\bf 267} (2014), 395--469, \href{https://arxiv.org/abs/1207.6036v3}{arXiv:1207.6036v3}

\bibitem[KM01]{KM01} C. Korff and B. M. McCoy,
{\em Loop symmetry of integrable vertex models  at roots  of unity}, Nucl. Phys. {\bf B618} (2001), 551--569,  \href{http://arxiv.org/abs/hep-th/0104120}{arXiv:hep-th/0104120}

\bibitem[KP11]{KP11} S. Kolb and J. Pellegrini,
{\em Braid group actions on coideal subalgebras of quantized enveloping algebras}, J. Algebra {\bf 336} (2011), 395--416.

\bibitem[Le99]{Le99}
G. Letzter,
{\em Symmetric pairs for quantized enveloping algebras}, J. Algebra {\bf 220} (1999), 729--767.

\bibitem[Le02]{Le02}
G. Letzter,
{\em Coideal subalgebras and quantum symmetric pairs},
New directions in Hopf algebras (Cambridge), MSRI publications, {\bf 43}, Cambridge Univ. Press, 2002, pp. 117--166.


\bibitem[LW19a]{LW19a} M. Lu and W. Wang,
{\em Hall algebras and quantum symmetric pairs I: foundations}, \href{http://arxiv.org/abs/1901.11446}{arXiv:1901.11446}

\bibitem[LW19b]{LW19b} M. Lu and W. Wang,
{\em Hall algebras and quantum symmetric pairs II: reflection functors}, Commun. Math. Phys. (to appear), \href{http://arxiv.org/abs/1904.01621}{arXiv:1904.01621}

\bibitem[Lu90]{Lu90} G. Lusztig,
{\em Canonical bases arising from quantized enveloping algebras},
J.~Amer.~Math.~Soc.~{\bf 3} (1990), 447--498.

\bibitem[Lu93]{Lu93} G. Lusztig, {\em Introduction to quantum groups},
Modern Birkh\"auser Classics, Reprint of the 1993 Edition,
Birkh\"auser, Boston, 2010.

\bibitem[ND08]{ND08}
A. Nishino and T. Deguchi,
{\em An algebraic derivation of the eigenspaces associated with an Ising-like spectrum of the superintegrable chiral Potts model}, J. Stat. Phys. {\bf 133} (2008), 587--615,  \href{http://arxiv.org/abs/0806.1268}{arXiv:0806.1268}




\bibitem[Ter18]{Ter18} P. Terwilliger,
{\em The  Lusztig  automorphism  of  the $q$-Onsager  algebra},
J. Algebra {\bf 506} (2018), 56--75, \href{http://arxiv.org/abs/1706.05546}{arXiv.1706.05546}

\end{thebibliography}
\end{document}